\newtheorem{theorem}{Theorem}
\newtheorem{lemma}{Lemma}
\numberwithin{lemma}{section}
\newtheorem*{theorem*}{Theorem}
\newtheorem{corollary}[theorem]{Corollary}
\newtheorem{proposition}[lemma]{Proposition}
\newtheorem{definition}[lemma]{Definition}
\newtheorem{remark}[lemma]{Remark}
\newtheorem{example}[lemma]{Example}
\newtheorem*{condition*}{Condition}
\newcommand{\RR}{\mathbb{R}} 
\newcommand{\EE}{\mathbb{E}} 
\newcommand{\indep}{\perp \!\!\! \perp}
\newcommand{\laweq}{\stackrel{\mathcal{L}}{=}}
\newcommand{\cconverge}{\stackrel{c}{\xrightarrow[n \to +\infty]{}}}
\newcommand{\sperp}{\raisebox{-0.7pt}{\scalebox{0.7}[0.35]{$\perp$}}}
\newcommand{\barp}[1]{\overset{\sperp}{#1}}
\newcommand{\barpp}[2]{\barp{#1}^{\raisebox{-2pt}{$\scriptstyle #2$}}}
\newcommand{\bs}{\boldsymbol}
\newcommand{\balpha}{\boldsymbol\alpha} 
\newcommand{\cI}{\mathcal I} 
\newcommand{\cF}{\mathcal F} 
\newcommand{\cL}{\mathcal L} 
\newcommand{\cN}{\mathcal N} 
\newcommand{\bq}{\boldsymbol q} 
\newcommand{\bu}{\boldsymbol u} 
\newcommand{\varnew}{\barp{\sigma}^{\raisebox{-2pt}{$\scriptstyle 2$}}}
\newcommand{\stdnew}{\overset{\sperp}{\sigma}}
\newcommand{\eqlaw}{\stackrel{\mathcal L}{=}} 
\newcommand{\cond}[1]{\stackrel{\mathcal L}{=}_{| #1}}
\DeclareMathOperator{\LCP}{LCP}
\DeclareMathOperator{\Elliptic}{Elliptic}
\DeclareMathOperator{\colspan}{span}
\renewcommand\paragraph{\@startsection{paragraph}{4}{\z@}%
  {-3.25ex\@plus -1ex \@minus -.2ex}%
  {1.5ex \@plus .2ex}%
  {\normalfont\normalsize\bfseries}}
\title[Elliptic Approximate Message Passing]{Elliptic Approximate Message 
 Passing and an application to theoretical ecology}
\author[Gueddari et al.]{Mohammed-Younes Gueddari, Walid Hachem, Jamal Najim}
\date{\today\\
CNRS, Laboratoire d'informatique Gaspard Monge (LIGM / UMR 8049),
Universit\'e Gustave Eiffel, France} 
\begin{document}

\maketitle

\begin{abstract}

Approximate Message Passing (AMP) algorithms
% , initially introduced by \cite{Donoho_2009}, 
have recently gathered significant attention across disciplines such as statistical physics, machine learning, and communication systems. This study aims to extend AMP algorithms to non-symmetric (elliptic) matrices, motivated by analyzing equilibrium properties in ecological systems featuring elliptic interaction matrices.

In this article, we provide the general form of an AMP algorithm associated to a random elliptic matrix, the main change lying in a modification of the corrective (Onsager) term. In order to establish the statistical properties of this algorithm, we use and prove a generalized form of Bolthausen conditioning argument, pivotal to proceed by a Gaussian-based induction.

We finally address the initial motivating question from theoretical ecology. Large foodwebs are often described by Lotka-Volterra systems of coupled differential equations, where the interaction matrix is elliptic random. In this context, we design an AMP algorithm to analyze the statistical properties of the equilibrium point in a high-dimensional regime. We rigorously recover the results established by Bunin \cite{Bunin} and Galla \cite{Galla_2018} who used techniques from theoretical physics, and extend them with the help of {\it Propagation of chaos} type arguments.

\end{abstract}

\section{Introduction}

Approximate Message Passing (AMP) is a class of versatile and configurable iterative algorithms. The output is a sequence of high-dimensional $\mathbb{R}^n$-valued random vectors $(\bs{u}^k)_{k\ge 1}$ based on $n\times n$ (usually symmetric) random matrices, see \eqref{eq:AMP_symmetric}. An important feature of AMP is a precise description of the $(\bs{u}^k)$'s statistical properties as $n$ goes to infinity, mainly via the so-called Density Evolution (DE) equations \eqref{eq:DE}. 

Initially used in statistics for solving compressed sensing and sparse signal recovery problems \cite{Donoho_2009,bayati2011dynamics}, the AMP algorithms have found numerous applications in the fields of high-dimensional estimation \cite{desh-abb-mon-17,lel-mio-ptrf19}, communication theory 
\cite{bar-krz-it17,rush-gre-ven-17},  or 
statistical physics \cite{mon-siam-19},  
and have undergone extensive developments that have widened their spectrum of applications.

The goal of this article is to extend the AMP procedure to a non-Hermitian setting and to consider large elliptic random matrices. This new setting is mainly motivated by the study of equilibria in large random Lotka-Volterra systems of differential equations, a popular model in theoretical ecology. Interestingly, modifying the matrix nature in the AMP algorithm changes the iteration equation (modification of the {\it Onsager term}), necessitates extra mathematical developments but does not modify the DE equations.

%Approximate Message Passing (AMP) is a class of high dimensional iterative algorithms that have found applications in different fields such as compressed sensing and sparse signal recovery \cite{Donoho_2009}. AMP revolutionized the approach to solving high-dimensional estimation problems, particularly for structured signals like sparse ones, enabling efficient processing of large datasets and addressing high-dimensional models in areas such as high dimensional statistics, machine learning, and communications. Over time, AMP has undergone extensive development, refinement, and adaptation to a diverse range of problems, including denoising and matrix completion. Researchers have explored many ways to improve AMP, making it a useful tool for analyzing and working with data in modern times. Traditionally, AMP has typically used a simple random symmetric matrix in its iterative process. However, in this study, we will expand the AMP iterative scheme to work with a more intricate matrix model called elliptic matrices.

\subsection*{A primer on Approximate Message Passing} 
Recall the definition of the Gaussian Orthogonal Ensemble (GOE), a $n\times n$ random matrix with representation $(X + X^\top) / \sqrt{2}$ where $X\in \mathbb{R}^{n\times n}$ has $n^2$ independent $\cN(0,1)$ elements. Consider the following AMP iterative algorithm :
\begin{equation}
\label{eq:AMP_symmetric}
\begin{cases}
    \bs{u}^1 = A_n\, h_{0}\left(\bs{u}^0, \bs{b}\right) \\
    \bs{u}^{k+1} = A_n\, h_{k}\left(\bs{u}^k, \bs{b}\right) 
- \left\langle \partial_1 h_k\left(\bs{u}^k,\bs{b}\right)\right\rangle_n
 h_{k-1}\left(\bs{u}^{k-1}, \bs{b}\right), \quad \text{for} \ k\geq 1
\end{cases}
\end{equation}
where $\bs{u}^{0}\in\mathbb{R}^n$ is an initialization vector that can be either random or deterministic, $\bs{b} = (b_i)_{i\in[n]}$ is a parameter vector and $\bs{u}^{k} = ( u^k_i)_{i\in[n]} \in \RR^n$ is the iterate at step $k$.
Matrix $A_n$ is $n\times n$ random such that $\sqrt{n} A_n$ is drawn from GOE. Functions $h_{k} : \RR^2 \to \RR $ ($k\ge 0$ with the convention that $h_{-1}=0$) are the so-called activation functions and applied componentwise to vectors $\bs{u}^k=(u^k_i)$ and $\bs{b}=(b_i)$: 
$$
h_{k}(\bs{u}^k, \bs{b}) = \left( h_{k}({u}^k_i, {b}_i)\right) \in \RR^n\, .
$$
These functions are assumed to be differentiable with respect to the first parameter. We denote the 
partial derivative $\frac{\partial h}{\partial u} (u,b)$ by $\partial_1 h(u,b)$, and introduce the notation:
$$
\left\langle \partial_1 h_k\left(\bs{u}^k,\bs{b}\right)\right\rangle_n = \frac 1n  \sum_{i\in[n]} \partial_1 h_k\left({u}^k_i,{b}_i\right)\in \mathbb{R}\, .
$$
\subsubsection*{The Onsager term} In the AMP literature, the so-called {\bf Onsager term} defined by
$$- \left\langle \partial_1 h_k\left(\bs{u}^k,\bs{b}\right)\right\rangle_n
 h_{k-1}\left(\bs{u}^{k-1}, \bs{b}\right)$$
plays a pivotal role to describe the (high dimensional) statistical properties of $(\bs{u}^0,\cdots, \bs{u}^k)$. It 
% is closely related to the Stein formula, and 
is designed to asymptotically remove (as $n$ goes to infinity) the non-Gaussian component from $A_n \, h_k\left(\bs{u}^k,\bs{b}\right)$. 

\subsubsection*{The joint empirical distribution} The joint empirical distribution of $(\bs{u}^1,\cdots, \bs{u}^k)$ is defined as
$$
\mu^{\bs{u}^1,\cdots, \bs{u}^k} = \frac{1}{n}\sum_{i\in [n]} \delta_{(u_i^1, \cdots, u_i^k)}\,,
$$
where $\delta_{(u_1,\cdots u_k)}$ is the Dirac distribution at point $(u_1,\cdots,u_k)$. The techniques developed within the scope of AMP enable to describe the limit of this joint empirical distribution as $n\to \infty$, which turns out to be the distribution of a centered $k$-dimensional Gaussian vector whose covariance matrix is defined recursively by the {\bf Density Evolution equations}.

\subsubsection*{Density Evolution (DE) equations} 
These equations recursively characterize a family of covariance matrices $R^{k} \in \RR^{k\times k}$ with $k\ge 1$ defined as follows. 
\begin{itemize}
\item[-] Initialization: Let $(\bar{u},\bar{b})$ be a random vector in $\RR^2$. Set 
\begin{equation}\label{eq:DE-init}
R^1 = \mathbb{E}\left[h_0^2\left(\bar{u}, \bar{b}\right)\right]%\qquad\textrm{and}\qquad Z_1\sim {\mathcal N}(0,R^1)\, .
\end{equation}
and let $Z_1\sim {\mathcal N}(0,R^1)$ be a random variable independent from $(\bar{u},\bar{b})$. 
\item[-] Recursion: Suppose that $R^{k-1}$ is given. Let $(Z_1,\cdots,Z_{k-1})\sim {\mathcal N}_{k-1}(0,R^{k-1})$ be a vector independent from $(\bar{u},\bar{b})$. Let $R^k=(R^k_{ij})$ be a $k\times k$ matrix defined by 
\begin{equation}
        \label{eq:DE}
            R^k_{ij} = \begin{cases}
   R^1 & \mbox{ if } i=1, j=1\\
       \mathbb{E}\left[h_{i-1}\left(Z_{i-1}, \bar{b}\right)h_{j-1}\left(Z_{j-1}, \bar{b}\right)\right] & \mbox{ if } i \geq 1 , j\geq 1 \\
   \end{cases}\,
\end{equation}
with the convention that $Z_0 \triangleq \bar{u}$. 
\end{itemize}
Notice that $R^{k-1}$ represents the upper-left corner of matrix $R^k$. Thus we can also define an operator $R$ as follows which will encode all matrices $R^k$:
\begin{equation}
\label{eq:DEOperator}
    \begin{array}{cccccc}
R & : & \mathbb{N}^*\times\mathbb{N}^* & \to & \mathbb{R}_+ &\\
 & & (i,j) & \mapsto & R_{ij}^k & \text{where } k\geq i,j,\\
\end{array}
\end{equation}
We will simply denote $R(i,j)$ by $R_{ij}$ for all $(i,j)\in \mathbb{N}^*\times\mathbb{N}^*.$

We now (informally) state the main result of the AMP for a GOE matrix: 
\begin{theorem*}[\cite{bayati2011dynamics,feng2021unifying}]
\label{th:AMP_conv}
    Let $A_n$ be a $n\times n$ matrix such that $\sqrt{n}A_n$ is drawn from the GOE. Let $\bs{u}^0, \bs{b}\in \mathbb{R}^n$ independent from $A_n$, and $(\bs{u}^\ell)_{1\le \ell\le k}$ be defined by \eqref{eq:AMP_symmetric}. Suppose that $\mu^{\bs{u}^0,\bs{b}} \xrightarrow[n\to \infty]{} {\mathcal L}(\bar{u},\bar{b})$  
    and let $R^k$ be defined by the DE equations \eqref{eq:DE-init}-\eqref{eq:DE}. Then
$$
\mu^{\bs{u}^1,\cdots, \bs{u}^k} \xrightarrow[n\to\infty]{}  {\mathcal L}(Z_1,\cdots, Z_k) \qquad \textrm{where}\quad (Z_1,\cdots, Z_k)\sim {\mathcal N}_k(\bs{0}, R^k)\, .
$$
\end{theorem*}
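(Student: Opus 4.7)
The plan is to follow Bolthausen's Gaussian conditioning strategy, proving the claim by induction on the iteration index $k$. The structural fact driving the argument is that for a GOE matrix $\sqrt{n}A_n$ and vectors $v_1,\ldots,v_k\in\RR^n$ measurable with respect to an appropriate sub-sigma-algebra, the conditional law of $A_n$ given $A_n v_1,\ldots, A_n v_k$ is still Gaussian and can be decomposed as a deterministic linear combination of the $A_n v_\ell$'s (the projection onto $\colspan(v_1,\ldots,v_k)$) plus an independent GOE acting on the orthogonal complement. This is precisely the mechanism that lets one write $A_n h_k(\bs{u}^k,\bs{b})$ as an Onsager-type correction plus a fresh Gaussian component, asymptotically.

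For the base case $k=1$, one has $\bs{u}^1=A_n h_0(\bs{u}^0,\bs{b})$; since $A_n$ is independent of $(\bs{u}^0,\bs{b})$ and $\|h_0(\bs{u}^0,\bs{b})\|^2/n\to \EE[h_0^2(\bar u,\bar b)]=R^1$, Gaussian concentration yields $\mu^{\bs{u}^1}\to \cN(0,R^1)$. For the inductive step, suppose the joint empirical distribution of $(\bs{u}^0,\bs{b},\bs{u}^1,\ldots,\bs{u}^{k-1})$ converges to $(\bar u,\bar b,Z_1,\ldots,Z_{k-1})$. Introduce the filtration $\cF_k$ generated by $\bs{u}^0,\bs{b}$ together with $A_n h_0(\bs{u}^0,\bs{b}),\ldots,A_n h_{k-1}(\bs{u}^{k-1},\bs{b})$. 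Conditionally on $\cF_k$, the conditioning lemma gives a representation of the form
\[
A_n h_k(\bs{u}^k,\bs{b}) \;\eqlaw\; \sum_{\ell=0}^{k-1} \alpha_\ell^{(n)}\, A_n h_\ell(\bs{u}^\ell,\bs{b}) \;+\; P_k^\perp \widetilde A_n\, h_k(\bs{u}^k,\bs{b}),
\]
where the coefficients $\alpha_\ell^{(n)}$ come from projecting $h_k(\bs{u}^k,\bs{b})$ onto $\colspan(h_0(\bs{u}^0,\bs{b}),\ldots,h_{k-1}(\bs{u}^{k-1},\bs{b}))$, $P_k^\perp$ projects onto the orthogonal complement, and $\widetilde A_n$ is an independent GOE.

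By the induction hypothesis, the Gram matrix $(\langle h_i(\bs{u}^i,\bs{b}), h_j(\bs{u}^j,\bs{b})\rangle/n)_{i,j<k}$ converges to the relevant block of the DE matrix, so the $\alpha_\ell^{(n)}$ converge to deterministic limits $\alpha_\ell^\star$. A Stein/Gaussian integration-by-parts identity applied in the limit shows $\alpha_{k-1}^\star=\EE[\partial_1 h_{k-1}(Z_{k-1},\bar b)]$, which is exactly the quantity that $\langle\partial_1 h_{k-1}(\bs{u}^{k-1},\bs{b})\rangle_n$ approximates, while the earlier $\alpha_\ell^\star$ ($\ell<k-1$) cancel telescopically against the Onsager terms carried over from previous iterates. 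The remaining Gaussian piece $P_k^\perp \widetilde A_n h_k(\bs{u}^k,\bs{b})$ is independent of $\cF_k$, centered, with per-coordinate variance converging to $R^k_{kk}-\|P_k\text{-projection}\|^2$, which together with the cross-covariances inherited from the $\alpha_\ell^\star$ reconstructs the full $k$-th row and column of $R^k$.

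The main obstacle is the analytic bookkeeping rather than the algebraic identity: one must propagate convergence of $\mu^{\bs{u}^0,\ldots,\bs{u}^k,\bs{b}}$ in the sense of pseudo-Lipschitz test functions of order two, so that norms, inner products, and empirical averages of the form $\langle\partial_1 h_k(\bs{u}^k,\bs{b})\rangle_n$ all converge in probability to their Gaussian-expectation counterparts, and so that the random coefficients $\alpha_\ell^{(n)}$ inherit this convergence. This requires uniform moment control on the iterates (ensured by Lipschitz/polynomial-growth assumptions on the $h_k$'s) and a careful handling of the error terms arising from replacing $\alpha_\ell^{(n)}$ by $\alpha_\ell^\star$ inside expressions that are themselves vectors of size $n$. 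Once these controls are in place the induction closes, and identification of the covariance structure with $R^k$ is a direct consequence of the DE recursion \eqref{eq:DE} applied to the limiting Gaussian family.
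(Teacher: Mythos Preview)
Your overall plan---Bolthausen conditioning, induction on $k$, pseudo-Lipschitz control of Gram matrices and empirical derivatives, and a Stein identity in the limit---is exactly the route taken in the paper (Section~\ref{sec:proofs} treats the elliptic case; the present statement is the specialisation $\rho=1$). But the mechanism you describe for absorbing the Onsager term is incorrect, and this is where your argument would break.

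First, your conditioning formula is incomplete: because $A_n$ is symmetric, conditioning on $A_n\bs{q}^0,\ldots,A_n\bs{q}^{k-1}$ also pins down $\bs{q}^\ell{}^\top A_n$, and the correct GOE decomposition (with $P_k=\Pi_{\colspan Q_k}$, $\barpp{\bs{q}}{k}=P_k^\perp\bs{q}^k$) reads
\[
A_n\bs{q}^k \ \cond{\cF_k}\ A_nP_k\bs{q}^k \;+\; (A_nP_k)^\top\barpp{\bs{q}}{k} \;+\; P_k^\perp\widetilde A_n\,\barpp{\bs{q}}{k}\,,
\]
with the middle term $(A_nP_k)^\top\barpp{\bs{q}}{k}$ absent from your display. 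Second, the regression coefficients $\alpha_\ell^k$ do \emph{not} telescope away and are \emph{not} equal to Onsager coefficients: all of them survive in the limit as $\bar\alpha_\ell^k=\bigl[(R^k)^{-1}R^{k+1}_{[k],k+1}\bigr]_\ell$, and they are precisely the coefficients expressing $Z_{k+1}$ as its linear regression on $(Z_1,\ldots,Z_k)$ plus an independent Gaussian of variance $\varnew_{k+1}$. What actually happens (Proposition~\ref{prop:struct} with $\rho=1$) is that after subtracting the Onsager term one obtains
\[
\bs{u}^{k+1} = \sum_{\ell=1}^{k}\alpha_\ell^k\bs{u}^\ell
+Q_k(Q_k^\top Q_k)^{\dag}\Bigl(\bs{v}^{k,k}-\sum_{\ell=1}^k\alpha_\ell^k\bs{v}^{k,\ell-1}\Bigr)
+\cI_k(A),
\quad \bs{v}^{k,\ell}=U_k^\top\bs{q}^\ell-d_\ell Q_k^\top\bs{q}^{\ell-1},
\]
and it is the middle block---into which the Onsager corrections and the symmetric term $(A_nP_k)^\top\barpp{\bs{q}}{k}$ have been folded---that vanishes asymptotically. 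The Stein step is used precisely here, to show $\tfrac1n\bs{v}^{k,\ell}\to 0$ via $\EE[Z_j\, h_\ell(Z_\ell,\bar b)]=\bar d_\ell\,R^k_{j,\ell}$, not to identify any $\alpha_\ell^\star$ with an Onsager coefficient.
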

The nature of the convergence of measures $\mu^{\bs{u}^0,\bs{b}}$ and $\mu^{\bs{u}^1,\cdots, \bs{u}^k}$ will be specified for the main theorem of the article.

\subsubsection*{About the literature} Numerous studies have extended AMP algorithms to more complex and general scenarios \cite{javanmard2013state,Bayati_2015}. For instance, Vector AMP \cite{rangan2018vector} broadens AMP to handle vector observations, considering $\bs{u}^{k}$ as a matrix rather than a vector. This adaptation is suited for multi-channel and multi-dimensional signal processing tasks. Fan \cite{fan2021approximate} generalizes AMP algorithms to encompass a wide range of matrices, particularly rotationally invariant ones, Dudeja et al. \cite{dudeja2023universality} explore universality properties of AMP, etc. 

Another important generalization known as ``Asymmetric AMP'' is discussed in \cite[section 2.2]{feng2021unifying}. Given a rectangular matrix $A$ with i.i.d. Gaussian entries, the form of this AMP algorithm can be summarized as the following:
\begin{equation*}
    \begin{cases}
        \bs{v}^k &= A \ g_k\left(\bs{u}^k,\alpha\right) - \beta_k h_{k-1}\left(\bs{v}^{k-1},\bs{b}\right)\\
        \bs{u}^{k+1} &= A^\top h_k\left(\bs{v}^k,\bs{b}\right)-b_k g_k\left(\bs{u}^k,\alpha\right)
    \end{cases},
\end{equation*}
which is a two-step algorithm that involves both matrices $A$ and $A^\top$. 

None of these extensions cover our model of interest.
\subsection*{Non-Hermitian AMP} In the sequel we will consider an AMP algorithm based on an elliptic matrix instead of a GOE matrix. For the sake of our application, we shall also enable multiple parameter vectors $(\bs{b}^1,\cdots, \bs{b}^p)$ instead of a single one ($p$ fixed). These vectors will be stacked into a $n\times p$ matrix
\begin{equation}
\label{eq:multipleVector}
    B=(\bs{b}^1,\cdots,\bs{b}^p)
\end{equation}

and for $h:\mathbb{R}^{k+1}\to \mathbb{R}$, $\bs{u}\in \mathbb{R}^n$ , $h(\bs{u}, B)$ will denote the vector
$$
h(\bs{u},B) = \left( \,h(u_i,B_{i1},\cdots, B_{ip})\,\right)_{i\in [n]}\, .
$$

\begin{definition}[Gaussian elliptic matrix model]
\label{def:elliptic-model} 
 A random matrix $M_n=(M_{ij}) \in \RR^{n\times n}$ is said to follow the Gaussian elliptic 
 distribution with parameter $\rho\in [-1,1]$ if the entries' distributions are given by
 $$
 M_{ii}\sim \cN(0, 1 + \rho)\quad \textrm{for}\ i\in [n]\qquad \textrm{and} \qquad 
 \begin{bmatrix} M_{ij}\\ M_{ji} \end{bmatrix}\sim \cN_2\left( \bs{0}, \begin{bmatrix} 1 & \rho\\ \rho  & 1 \end{bmatrix}\right)\quad \textrm{for}\  i<j\, .
 $$
 Moreover, all the elements of the following set are independent: 
 $$
 \Big\{M_{ii} \,, \ i\in [n]\Big\}\cup 
  \Big\{(M_{ij}, M_{ji}) \,, \ i,j \in [n] \,, i < j\Big\}\, .
 $$
\end{definition}
We write $M_n\sim \Elliptic(n,\rho)$ for such matrices. We will also call a
\textit{normalized (Gaussian) elliptic matrix} $A_n \in \RR^{n\times n}$ a matrix that
verifies $\sqrt{n} A_n \sim \Elliptic(n,\rho)$.

Notice that a normalized elliptic matrix $A_n$ is no longer symmetric. Its spectral distribution has been thoroughly studied \cite{girko1986elliptic,naumov2012elliptic,10.1214/EJP.v19-3057} and it is well-known that it almost surely converges as $n\to \infty$ to the uniform law over the compact set bounded by the ellipse defined for $|\rho| < 1$ by
\[
\mathscr E_\rho = \left\{ (x,y) \in \RR^2 \ : \ 
  \frac{x^2}{(1+\rho)^2} + \frac{y^2}{(1-\rho)^2} \leq 1 \right\}\,,
\] 
hence the name. This model interpolates from antisymmetric matrices ($\rho=-1$) to GOE matrices ($\rho=1$) with the important special case of matrices with i.i.d. Gaussian entries ($\rho=0$).  
%\subsubsection*{A modified Onsager term}
%The Onsager term gives the AMP algorithm the fundamental characteristic of tracking down the joint distribution of the iterates coordinates as $k$ approaches infinity by asymptotically cancelling all non-Gaussian components of $A \
%h_k\left(\bs{u}^k,\bs{b}\right)$.  In this work, we demonstrate that for a general elliptic matrix with a correlation coefficient $\rho$, the Onsager term will be expressed as 
%$$-\rho \left\langle \partial_1 h_k(\bs{u}^k,\bs{b})\right\rangle_n
%h_{k-1}\left(\bs{u}^{k-1},\bs{b}\right).$$
\subsubsection*{Elliptic AMP}
For a normalized elliptic matrix $A_n$, the AMP algorithm takes the form:
\begin{equation}
\label{eq:AMP_elliptic}
\bs{u}^{k+1} = A_n \ h_{k}\left(\bs{u}^k, B\right) - \rho 
\left\langle \partial_1 h_k\left(\bs{u}^k,B\right)\right\rangle_n
h_{k-1}\left(\bs{u}^{k-1}, B\right)\,,
\end{equation}
and we shall prove in Theorem \ref{th:main} the counterpart of Theorem \ref{th:AMP_conv} with the same DE equations, vector $\bs{b}$ being replaced by matrix $B$.

Notice however the modified Onsager term, multiplied by the correlation coefficient $\rho$. If $\rho=0$, matrix $\sqrt{n}A$ has i.i.d. $\cN(0,1)$ entries and the Onsager term vanishes. For $\rho=1$ we recover the GOE model.  

%In other terms, we will prove that the result of Theorem \ref{th:AMP_conv} holds when we take a sequence $(\bs{u}^\ell)_{1\leq \ell k}$ that satisfies  \eqref{eq:AMP_elliptic}.
%\begin{remark}
%    It is noteworthy that when dealing with a matrix $A$ featuring independent and identically distributed entries (i.e., $\rho=0$), the Onsager term vanishes. Furthermore, an elliptic matrix with $\rho=1$ corresponds to a symmetric
%matrix.
%\end{remark}

\subsubsection*{Bolthausen's conditioning argument} In the framework of the GOE, the challenge for rigorously proving Theorem \ref{th:AMP_conv} by induction lies in an essential ingredient often called  Bolthausen's conditioning argument \cite{bolthausen2014iterative}. This technique provides a convenient representation of the conditional distribution of the new iterate $\bs{u}^{k+1}$ given the past $\left(\bs{u}^1,\cdots,\bs{u}^k\right)$. In the course of the proof, we establish a generalized version of Bolthausen's technique suited to handle elliptic matrix models.

\subsection*{Application to theoretical ecology}

An important challenge in theoretical ecology is to build and analyze mathematical models to describe trophic networks and food-webs \cite{leibold2004metacommunity} of large dimension. In this regard, Lotka-Volterra (LV) systems of coupled differential equations is a popular model to describe the evolution of the various species' abundances in a food-web. 
The LV system of equations is written:
\begin{equation}\label{eq:LV-system}
\frac{d\bs{x}}{dt} (t) = \bs{x}(t) \odot 
  \left( \bs{r} - \left(I_n - \Sigma_n \right) \bs{x}(t) \right), 
  \quad 
 \bs{x}(0) \in (0,\infty)^n ,  
\end{equation}
where $\odot$ stands for the Hadamard product, $\bs{x}(t)\in \mathbb{R}^n$ is the vector of abundances of the $n$ species at time $t$, $\bs{r}\in \mathbb{R}^n$ is the vector of intrinsic growth rates of the species and $\Sigma_n=(\Sigma_{ij})$ is the $n\times n$ interaction matrix, $\Sigma_{ij}$ representing the effect of species $j$ on the growth of species $i$. 

In large dimension, a key feature of LV systems is the use of random matrices\footnote{In theoretical ecology, the use of random matrices goes back to May \cite{may1972will}.}, see for instance \cite{allesina2015stability,akjouj2022complex}, to model the interactions between the different species. This choice of a random matrix model is motivated in particular because the estimation of the real interactions is often out of reach. 
%The case where the interaction matrix $\Sigma_n$ is elliptic, more precisely $\kappa \sqrt{n} \Sigma_n \sim \textrm{Elliptic}(n,\rho)$ (here $\kappa>0$ is an extra degree of freedom), is of particular interest in theoretical ecology  and will be considered hereafter. 

An elliptic interaction matrix $\Sigma_n$, more precisely $\kappa \sqrt{n}
\Sigma_n \sim \textrm{Elliptic}(n,\rho)$ (here $\kappa>0$ is an extra degree of
freedom), covers the case where the reciprocal interactions $\Sigma_{ij}$ and
$\Sigma_{ji}$ between species are correlated. The elliptic model
encompasses the cases of independent and equal reciprocal ecological
interactions, and is widely considered in theoretical ecology
\cite{allesina2012stability,allesina2015stability,Bunin,Galla_2018}. 

The question we shall address is the description of the statistical properties of an equilibrium $\bs{x}^\star\in \mathbb{R}^n$ to \eqref{eq:LV-system}, as $n$ goes to infinity, whenever such an equilibrium exists (sufficient conditions for the existence of a unique and stable equilibrium have been provided in \cite{clenet2022equilibrium}). More specifically, we will be interested in the number of surviving species at equilibrium, the distribution of the surviving species, the individual distribution of a species, etc. 

Based on Theorem \ref{th:main} and following the strategy developed in \cite{akjouj2023equilibria} in the context of a symmetric interaction matrix, we will design an AMP algorithm which shall capture the equilibrium's statistical properties. This question has already been addressed by Bunin \cite{Bunin} and Galla \cite{Galla_2018} who provided a full description of $\bs{x}^\star$'s statistical properties via a system of non-linear equations at a physical level of rigor. We recover their equations, cf. \eqref{eq:sys}, and provide a rigorous analysis of this system, substantially more demanding than in the symmetric case \cite{akjouj2023equilibria}.

Combining a local AMP result (see Corollary \ref{cor:partialAMP}) with
arguments from the propagation of chaos theory, we also obtain new results on
the individual distributions of species with different intrinsic growth rates.

%\subsection*{Limitations} It is worth noting that our approach has a prominent limitation associated with the assumption of Gaussianity in the matrix $A$. Our proof heavily relies on some classical properties of Gaussian vectors. However, we hold the view that this assumption can be relaxed, similarly to the approach demonstrated in \cite{Bayati_2015}, which employs a purely combinatorial methodology. It's important to notice that combinatorial methods typically come at the cost of achieving convergence in probability, rather than almost sure convergence.

\subsection*{Outline of the article} In Section \ref{sec:AMP}, we present the elliptic AMP algorithm and state the main corresponding results, Theorem \ref{th:main} (global AMP) and Corollary \ref{cor:partialAMP} (blockwise AMP). In Section \ref{sec:ecology}, we present an application of AMP to theoretical ecology and design a specific AMP algorithm to describe the statistical properties of an equilibrium to a large LV system, see Theorem \ref{th:main-LV}. Relying on propagation of chaos arguments, we describe the limiting behaviour of individual species' abundances in Corollary \ref{coro:chaos-prop-I} (global exchangeability assumption) and Theorem \ref{th:chaos-prop-II} (blockwise exchangeability assumption).   
Proofs of AMP results are provided in Section \ref{sec:proofs}. Proofs related to LV equilibria are provided in Section \ref{sec:proofs-ecology}.

Technical results of special interest are Lemma \ref{lemma:main-system} (description of the key equilibrium parameters via a deterministic system), Propositions \ref{prop:bolt} and \ref{prop:bolt++} (extension of Bolthausen conditioning argument to elliptic random matrices) and Proposition \ref{prop:sznitman_generalized} (chaos propagation for blockwise exchangeable vectors).

\subsection*{Main notations and definitions} For a positive integer $n$, denote $[n]=\{1,\cdots n\}$. 
For $x\in \mathbb{R}$ let $x_+=\max(x,0)$ and $x_-=-\min(x,0)$ so that $x=x_+-x_-$. 
Vectors will be denoted by lowercase bold letters $\bs{x} = (x_i)\in \RR^n$ and matrices by capital letters. For a matrix $A=(A_{ij})\in \RR^{n\times n}$, we denote by $A_{i,*}$ its $i$-th row and by $A_{*,j}$ its $j$-th column we also denote by $A_{[i],j}$ the first $i$ elements of the $j$-th column of $A$. We denote by $A^\top$ the transpose transpose matrix of $A$.

For a vector $\bs{x}\in \RR^n$ (respectively a $n\times n$ matrix $B$), $\lVert \bs{x} \rVert $ (resp. $\| B\|$) denotes its euclidean norm (resp. spectral norm) and $\langle \bs{x} \rangle_n \triangleq \frac{1}{n}\sum_{i=1}^n x_i$ the arithmetic mean of its coordinates. For two matrices $A,B$ with identical dimensions, denote by $A\odot B= (A_{ij} B_{ij})$ their Hadamard product. The notation applies for two $\mathbb{R}^n$-vectors $\bs{x}\odot \bs{y}=(x_i y_i)$.

For $f:\RR \rightarrow \RR$, $g:\RR^{p+1}\rightarrow \mathbb{R}$ and $\bs{x},\bs{y}^1,\cdots, \bs{y}^{p}\in \mathbb{R}^n$, denote by $f(\bs{x})$ and $g(\bs{x},\bs{y}^1,\cdots, \bs{y}^p)$ the $n$-dimensional vectors
$$
f(\bs{x}) = \left(f(x_i)\right)_{i\in [n]}\qquad \text{and}\qquad g(\bs{x},\bs{y}^1,\cdots, \bs{y}^p)= (g(x_i,y^1_i,\cdots, y_i^p))_{i\in [n]}\, .
$$ 
In particular, $\bs{x}_+=([x_i]_+)$. 

Denote by ${\mathcal L}(X)$ the law of a random variable $X$. The equality in law between $X$ and $Y$ will be either denoted $X \laweq Y$ or $\mathcal{L}(X) = \mathcal{L}(Y)$. Independence is denoted by $\indep$.

\subsection*{Acknowlegment} We thank all the members of the CNRS project 80-Prime-KARATE where part of this work has been initiated.

\section{AMP for random elliptic matrices}
\label{sec:AMP}

We first introduce the notions of complete convergence and Wasserstein spaces. These concepts are crucial for the precise formulation of our main theorem.

\subsection{Background} 
\subsubsection*{Complete convergence}
Given a sequence of random variables $(X_n)$, we say that $X_n$ converges completely to a constant $x$ if for any other sequence $(Y_n)_n$ such that $X_n \laweq Y_n$ for all $n$, $Y_n$ converges almost surely to $x$. We denote this mode of convergence as 
$$
X_n \cconverge x\qquad \text{or}\qquad X_n \xrightarrow[n\to\infty]{} x \quad \textrm{(completely)}\,.
$$
It is worth noticing that the complete convergence of $(X_n)_n$ to $x$ is equivalent to the condition $\sum_{n\in \mathbb{N}} \mathbb{P}(\lVert X_n - x \rVert > \varepsilon) < \infty$ for all $\varepsilon > 0$, as per Borel-Cantelli's lemma. One advantage of this convergence mode is that it is transmissible through equality in law, i.e.:

\begin{equation*}
    \text{If} \quad \begin{cases}
        X_n \cconverge x, \phantom{\Bigg|}\\
        Y_n \laweq X_n, \quad \text{for all} \ $n$
    \end{cases} \ \text{then} \quad Y_n \cconverge x.
\end{equation*}
This property is shared with convergence in probability but not with almost sure convergence.

\subsubsection*{Wasserstein spaces}
The Wasserstein space of order $r\geq 2$ denoted by $\mathcal{P}_r\left(\RR^d\right)$ is the set of probability distributions $\mu$ on $\RR^d$ with finite moments of order $r$:
\begin{equation*}
    \mathcal{P}_r\left(\RR^d\right) = \left\{\mu\in {\mathcal P}(\mathbb{R}^d)\,,\quad \int_{\RR^d} \lVert \bs{x} \rVert^r d\mu(\bs{x}) < \infty \right\}\,.
\end{equation*}
The Wasserstein distance between $\mu, \nu\in \mathcal{P}_r\left(\RR^d\right)$, denoted by $ \mathcal{W}(\mu,\nu)$, is defined as:
\begin{equation*}
    \mathcal{W}(\mu,\nu) = \inf_{\pi \in \Pi(\mu,\nu)} \left(\int_{\RR^d \times \RR^d} \lVert \bs{x}-\bs{y} \rVert^r d \pi(\bs{x},\bs{y}) \right)^{1/r},
\end{equation*}
where $\Pi(\mu,\nu)$ is the set of probability measures on $\RR^d\times \RR^d$ having $\mu$ and $\nu$ as marginals.
Given a sequence of probability measures $(\mu_n)\subset \mathcal{P}_r\left(\RR^d\right)$ we will say that this sequence converges in the Wasserstein space to $\mu\in \mathcal{P}_r\left(\RR^d\right)$ if $\mathcal{W}(\mu_n, \mu)\xrightarrow[n\to\infty]{} 0$. We write 
$$
\mu_n \stackrel{\mathcal{P}_r\left(\RR^d\right)}{\xrightarrow[n \to +\infty]{}} \mu\,.
$$
Convergence of measures in the Wasserstein space can be characterized using \textit{pseudo-Lipschitz} functions. We say that $f:\mathbb{R}^{d}\rightarrow \mathbb{R}$ is a pseudo-Lipschitz function of degree $r\geq 2$ if there exists a constant $L$ such that for every $\bs{x},\bs{y}\in\mathbb{R}^d$ the following inequality holds: 
$$ \left|f(\bs{x})-f(\bs{y})\right|\leq L \lVert \bs{x}-\bs{y} \rVert \left(1+\lVert \bs{x} \rVert^{r-1}+\lVert\bs{y} \rVert^{r-1}\right). $$
We denote the set of pseudo-Lipschitz functions by $PL_{r}\left(\mathbb{R}^d\right)$. The following classical lemma which can be found in \cite[Lemma 1]{akjouj2023equilibria} summarizes the characterizations of the convergence in the Wasserstein space.
\begin{lemma}
\label{lem:wasserstein}
    Let $\mu_n,\mu\in \mathcal{P}_{r}\left(\mathbb{R}^d\right)$ for $r\geq 2$. The following conditions are equivalent:
    \begin{enumerate}
        \item $\mu_n \stackrel{\mathcal{P}_r\left(\RR^d\right)}{\xrightarrow[n \to +\infty]{}} \mu$,
        \item for all $\varphi\in PL_{r}\left(\mathbb{R}^d\right), \ \int \varphi d\mu_n \rightarrow \int \varphi d\mu$,
        \item $\mu_n \stackrel{w}{\longrightarrow} \mu$ and $\int \lVert \bs{x}\rVert^r \mu_n(d\bs{x}) \rightarrow \int \lVert \bs{x}\rVert^r \mu(d\bs{x}).$
    \end{enumerate}
\end{lemma}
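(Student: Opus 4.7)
The plan is to prove the implications $(1) \Rightarrow (3) \Rightarrow (2) \Rightarrow (1)$, closing the cycle. Throughout, fix $r \ge 2$ and $\mu_n, \mu \in \mathcal{P}_r(\mathbb{R}^d)$.

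For $(1) \Rightarrow (3)$, I would argue first that Wasserstein convergence implies weak convergence: picking a nearly optimal coupling $\pi_n$ with $\int \|\bs{x}-\bs{y}\|^r d\pi_n \le \mathcal{W}(\mu_n,\mu)^r + 1/n$, for any bounded continuous $\varphi$ one can pass to the limit by dominated convergence along this coupling, or alternatively invoke that $\mathcal{W}_r$ dominates the L\'evy--Prokhorov metric. For the $r$-th moment convergence, note that $\bs{x} \mapsto \|\bs{x}\|$ is $1$-Lipschitz for the Euclidean norm, so the triangle inequality in $L^r(\pi_n)$ yields
\[
\Bigl| \Bigl(\int \|\bs{x}\|^r d\mu_n\Bigr)^{1/r} - \Bigl(\int \|\bs{y}\|^r d\mu\Bigr)^{1/r} \Bigr| \;\le\; \Bigl(\int \|\bs{x}-\bs{y}\|^r d\pi_n\Bigr)^{1/r} \;\longrightarrow\; 0.
\]

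For $(3) \Rightarrow (2)$, the core is a truncation argument combined with a uniform integrability lemma. Let $\varphi \in PL_r(\mathbb{R}^d)$ with constant $L$, so that $|\varphi(\bs{x})| \le |\varphi(\bs{0})| + L\|\bs{x}\|(1 + \|\bs{x}\|^{r-1})$, which is at most $C(1 + \|\bs{x}\|^r)$ for some $C$. Fix a smooth cutoff $\chi_R$ with $\chi_R = 1$ on $\{\|\bs{x}\| \le R\}$ and $\chi_R = 0$ on $\{\|\bs{x}\| \ge 2R\}$. Then $\varphi \chi_R$ is bounded continuous, so weak convergence gives $\int \varphi \chi_R d\mu_n \to \int \varphi \chi_R d\mu$. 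The remainder is bounded by $C \int_{\|\bs{x}\| \ge R} (1 + \|\bs{x}\|^r) d\mu_n$, and moment convergence combined with $\mu_n \Rightarrow \mu$ forces the sequence $(\|\bs{x}\|^r)$ to be uniformly integrable under $(\mu_n)$ (this is the standard characterization of $\mathcal{P}_r$-convergence via moments). Hence the tail is arbitrarily small uniformly in $n$ as $R \to \infty$, and the result follows by a standard $\varepsilon/3$ argument.

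For $(2) \Rightarrow (1)$, observe that every bounded Lipschitz function lies in $PL_r(\mathbb{R}^d)$, so (2) gives weak convergence. Moreover, for $\bs{x}_0$ fixed the function $\bs{x} \mapsto \|\bs{x} - \bs{x}_0\|^r$ satisfies the pseudo-Lipschitz bound (via the elementary inequality $|a^r - b^r| \le r|a-b|(a^{r-1}+b^{r-1})$), hence also lies in $PL_r(\mathbb{R}^d)$; applied with $\bs{x}_0 = \bs{0}$ this yields the $r$-th moment convergence of (3). It then remains to deduce (1) from (3): using the Kantorovich--Rubinstein type coupling, one can for instance construct couplings via Skorokhod representation giving $\bs{X}_n \to \bs{X}$ a.s.\ on a common probability space, and uniform integrability of $\|\bs{X}_n\|^r$ (the content of the moment convergence together with weak convergence) plus Vitali's theorem yields $\mathbb{E}\|\bs{X}_n - \bs{X}\|^r \to 0$, hence $\mathcal{W}(\mu_n,\mu)^r \to 0$.

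The main obstacle is the uniform integrability step in $(3) \Rightarrow (2)$, since one must rule out mass escaping to infinity while only controlling the total $r$-th moment and the weak limit; this is where the assumption $r \ge 2$ and the Portmanteau-type characterizations are used in a non-trivial way. The remaining implications are essentially bookkeeping once the cutoff-and-tail scheme is in place.
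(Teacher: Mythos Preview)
Your proof sketch is correct and complete in its essentials. The paper itself does not prove this lemma at all; it merely states it as classical and cites \cite[Lemma~1]{akjouj2023equilibria}. So there is nothing to compare against, and what you have written is a genuine addition: a self-contained argument for a result the paper takes for granted.

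One small remark: your closing comment that ``the assumption $r \ge 2$ \ldots\ is used in a non-trivial way'' in the uniform integrability step is not quite accurate. The uniform integrability of $\|\bs{x}\|^r$ under $(\mu_n)$, given weak convergence and convergence of $r$-th moments, follows for any $r \ge 1$ from the identity
\[
\int (\|\bs{x}\|^r - R)_+\, d\mu_n \;=\; \int \|\bs{x}\|^r\, d\mu_n \;-\; \int \min(\|\bs{x}\|^r, R)\, d\mu_n,
\]
where the second integrand is bounded continuous. The hypothesis $r \ge 2$ in the paper is there only because the applications downstream involve $PL_2$ test functions, not because the equivalence itself fails for $r \in [1,2)$. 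This does not affect the validity of your argument.
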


\subsection{Assumptions} We define an AMP algorithm by a triplet $\left(A_n, \mathcal{H}, \left(\bs{u}^0,B_n\right)\right)$, where $A_n$ is a random matrix of size $n\times n$, $\mathcal{H}=\left\{h_k(.,.)\right\}_{k\in\mathbb{N}}$ is a sequence of functions from $\mathbb{R}^{p+1}$ to $\mathbb{R}$, $\bs{u}^0\in \mathbb{R}^{n}$ is an initialization point and $B_n\in\mathbb{R}^{n\times p}$ is a matrix parameter. For our main theorem, the following assumptions are needed.

\begin{enumerate}[label=(A1)]
    \item \label{ass:A1}
     $A_n$ is a normalized elliptic matrix with correlation coefficient $\rho\in [-1,1]$.
\end{enumerate}

\begin{enumerate}[label=(A2)]
    \item \label{ass:A2}
     The random vector $(\bs{u}^0, B_n)\in\mathbb{R}^{p+1}$ is independent of $A_n$ and there exists a vector $(\bar{u},\bar{b}_1,\cdots, \bar{b}_p)$ whose distribution belongs to ${\mathcal P}_r(\mathbb{R}^{p+1})$ such that 
    \begin{equation*}
        \mu^{\bs{u}_n^0, B_n}=\mu^{\bs{u}_n^0,\bs{b}^1_n,\cdots, \bs{b}^p_n} \ \xrightarrow[n\to\infty]{{\mathcal P}_r(\mathbb{R}^{p+1})}  \ \mathcal{L}\left(\left(\bar{u},\bar{b}_1,\cdots, \bar{b}_p\right)\right) \text{ (completely) }.
    \end{equation*}
    We denote $\bs{\bar b}=(\bar{b}_1,\cdots, \bar{b}_p)$. 
\end{enumerate}

\begin{enumerate}[label=(A3)]

\item \label{ass:A3} For all $k\ge 0$, the function $h_k:\mathbb{R}^{p+1}\to \mathbb{R}$ is Lipschitz.
\end{enumerate}
\begin{enumerate}[label=(A4)]
    \item \label{ass:A4}
    For every $k\ge 0$, 
    $
    \mathbb{P}\left(\text{The function } x \mapsto h_k\left(x,\bs{\bar b}\right) \text{is constant} \right) < 1\,.
    $
\end{enumerate}

\begin{enumerate}[label=(A5)]
    \item \label{ass:A5}
    The functions $\partial_{1}h_k$ are continuous $\lambda\otimes \mathbb{P}^{\bar{a}}$- almost everywhere, where $\lambda$ is the Lebesgue measure on $\mathbb{R}$. 
\end{enumerate}

% \begin{itemize}
%     \item[(A1)] $A_n$ is a normalized $n\times n$ elliptic matrix with coefficient $\rho\in [-1,1]$ and it is independent of $(\bs{u}^0, B_n)$.
%     \item[(A2)] There exists a vector $(\bar{u},\bar{b}^1,\cdots, \bar{b}^p)$ whose distribution belongs to ${\mathcal P}_2(\mathbb{R}^{p+1})$ such that 
%     \begin{equation*}
%         \mu^{\bs{u}_n^0, B_n}=\mu^{\bs{u}_n^0,\bs{b}^1_n,\cdots, \bs{b}^p_n} \ \xrightarrow[n\to\infty]{{\mathcal P}_2(\mathbb{R}^{k+1})}  \ \mathcal{L}\left((\bar{u},\bar{b}^1,\cdots, \bar{b}^p)\right) \textit{ (completely) }.
%     \end{equation*}
%     We denote $\bs{\bar b}_p=(\bar{b}^1,\cdots, \bar{b}^p)$. 
% %    \item[(A2old)] There exists a couple of $L^2$ random variables $(\bar{u}, \bar{a})$ such that 
% %    \begin{equation*}
% %        \mu^{\bs{u}_n^0, \bs{a}_n} \ \wconverge \ \mathcal{L}\left((\Bar{u}, \Bar{a})\right) \textit{ (completely) }.
% %    \end{equation*}

%     \item[(A3)] For every $k\ge 0$, 
%     $$
%     \mathbb{P}\left(\text{The function } x \mapsto h_k\left(x,\vec{b}\right) \text{is constant} \right) \neq 1\,.
%     $$

%     \item[(A4)] The functions $\partial_{1}h_k$ are continuous $\lambda\otimes \mathbb{P}^{\bar{a}}$- almost everywhere, where $\lambda$ is the Lebesgue measure on $\mathbb{R}$. \jamal{à reprendre. j'ai l'impression que ce doit être p.s. par rapport à la loi de $(\bar u, \bar a)$, non?}

% \end{itemize}

\begin{remark}\phantom{ }

\begin{itemize}
    \item Assumption~\ref{ass:A4} ensures that the covariance matrices $R^k$ defined by the Density Evolution equations in \eqref{eq:DE} are positive definite, and in particular invertible. This is an important assumption used in the proof of Theorem~\ref{th:main} - see \cite[Lemma~2.2]{feng2021unifying}.
    \item Assumption~\ref{ass:A5} is a technicality needed to ensure the convergence of $\left\langle \partial_1 h_k\left(\bs{u}_n^k, B_n\right)\right\rangle_n$ to a deterministic limit.
\end{itemize}
\end{remark}

\subsection{Main result} Recall that $p$ is fixed. We first update the DE equations associated to a matrix parameter $B_n\in \mathbb{R}^{n\times p}$. This simply amounts to replace the scalar $\bar{b}$ in \eqref{eq:DE-init}-\eqref{eq:DE} by the vector $\bs{\bar b} = \left(\bar{b}_1,\cdots, \bar{b}_p\right)$. Let $h_i:\mathbb{R}^{p+1}\to \mathbb{R}$, consider vector $(\bar{u},\bs{\bar b})\in \mathbb{R}^{p+1}$ then Eq. \eqref{eq:DE-init} and \eqref{eq:DE} write
\begin{equation}\label{eq:DE-updated}
R^1 = \mathbb{E}\left[h_0^2\left(\bar{u}, \bs{\bar b}\right)\right]\,,\qquad 
            R^k_{ij} = \begin{cases}
   R^1 & \mbox{ if } i=1, j=1\\
       \mathbb{E}\left[h_{i-1}\left(Z_{i-1}, \bs{\bar b}\right)h_{j-1}\left(Z_{j-1}, \bs{\bar b}\right)\right] & \mbox{ if } i \geq 1 , j\geq 1 \\
   \end{cases}\,
\end{equation}

We can now state the main result of this section.
\begin{theorem}
\label{th:main}
    Let $k\geq 1$, assume \ref{ass:A1}-\ref{ass:A5} and consider a sequence of vectors $\left(\bs{u}^{k}\right)_{k}$ that satisfies the AMP scheme, i.e.
    \begin{equation*}
        \begin{cases}
        \bs{u}^{1} = A_n \ h_0\left(\bs{u}^0, B_n\right), \\
            \bs{u}^{k+1} = A_n \ h_{k}\left(\bs{u}^k, B_n\right) - \rho 
\left\langle \partial_1 h_k\left(\bs{u}^k,B_n\right)\right\rangle_n
h_{k-1}\left(\bs{u}^{k-1}, B_n\right)\,.
        \end{cases}
    \end{equation*}
    Let $(Z_1, \cdots, Z_k)$ be a centered Gaussian vector independent of $(\bar{u},\bs{\bar{b}})$ with covariance matrix $R^k$ given by the updated DE equations~\eqref{eq:DE-updated}. Then the following convergence of the iterates holds:
    \begin{equation}
    \mu^{B, \bs{u}^1,\cdots,\bs{u}^k} \ \stackrel{\mathcal{P}_r\left(\RR^{p+k}\right)}{\xrightarrow[n \to +\infty]{}} \ \mathcal{L}\left(\left(\bs{\bar b}, Z_1, \cdots, Z_k\right)\right) \quad \text{(completely)}\,.
\end{equation}
\end{theorem}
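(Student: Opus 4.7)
My plan is to prove the theorem by induction on $k$, following the classical AMP roadmap but using the generalized elliptic conditioning provided by Propositions \ref{prop:bolt} and \ref{prop:bolt++}. For the base case $k=1$, write $\bs{u}^1 = A_n \bs{v}$ with $\bs{v} = h_0(\bs{u}^0, B_n)$ independent of $A_n$. Conditionally on $\bs{v}$, the vector $\bs{u}^1$ is centered Gaussian with covariance asymptotic to $(\|\bs{v}\|^2/n)\, I_n$ up to $O(1/n)$ off-diagonal corrections coming from the elliptic correlations. Assumption \ref{ass:A2} combined with Lipschitzness of $h_0$ gives $\|\bs{v}\|^2/n \cconverge R^1$, and for any $\varphi \in PL_r(\RR^{p+1})$ the empirical average $n^{-1}\sum_i \varphi(B_{i,*}, u^1_i)$ concentrates around $\EE[\varphi(\bs{\bar b}, Z_1)]$ with $Z_1 \sim \cN(0,R^1)$ independent of $\bs{\bar b}$. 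Complete convergence is inherited from that of $\mu^{\bs{u}^0, B_n}$ by Gaussian concentration of Lipschitz functionals of $A_n$ together with Borel--Cantelli, and Lemma \ref{lem:wasserstein} then yields the convergence in $\mathcal{P}_r$.

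\textbf{Inductive step via elliptic Bolthausen.} Assuming the statement at step $k$, I would condition on $\cF_k = \sigma(\bs{u}^0, B_n, \bs{u}^1, \ldots, \bs{u}^k)$. In the GOE case this filtration is generated by vectors of the form $A_n \bs{w}_j$, and Bolthausen's identity represents $A_n \mid \cF_k$ as a known linear combination plus an independent GOE on the orthogonal complement of $\text{span}(\bs{w}_0, \ldots, \bs{w}_{k-1})$. In the elliptic case, $\cF_k$ also encodes the transposed actions $A_n^\top \bs{w}_j$ coupled to the forward ones through $\rho$, so the conditioning must treat both simultaneously; this is precisely what Propositions \ref{prop:bolt} and \ref{prop:bolt++} provide, essentially by splitting $A_n$ into its symmetric and antisymmetric Gaussian components (scaled respectively by $\sqrt{(1+\rho)/2}$ and $\sqrt{(1-\rho)/2}$, which are independent) and conditioning each piece as in the GOE case. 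Plugging the resulting decomposition into $A_n h_k(\bs{u}^k, B_n)$ produces three contributions: (i) a linear combination of past $h_j(\bs{u}^j, B_n)$ terms, which by the inductive hypothesis injects the target DE cross-covariances between $\bs{u}^{k+1}$ and $\bs{u}^1, \ldots, \bs{u}^k$; (ii) a correction term whose leading piece, after a discrete Stein (integration-by-parts) identity, is exactly $\rho\, \langle \partial_1 h_k(\bs{u}^k, B_n)\rangle_n\, h_{k-1}(\bs{u}^{k-1}, B_n)$, the factor $\rho$ arising from $\EE[A_{ij}A_{ji}] = \rho/n$ and being cancelled precisely by the Onsager term in \eqref{eq:AMP_elliptic}; and (iii) a fresh centered Gaussian, independent of $\cF_k$, whose componentwise variance converges to $\EE[h_k(Z_k,\bs{\bar b})^2]$, matching the diagonal entry of $R^{k+1}$.

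\textbf{Main obstacle and remaining verifications.} The core difficulty lies in the elliptic Bolthausen step itself: the joint conditioning on the row and column actions of $A_n$, coupled by $\rho$, forces tracking two orthogonal decompositions simultaneously, and this is exactly what scales the Onsager term by $\rho$ instead of $1$. Once Propositions \ref{prop:bolt} and \ref{prop:bolt++} are available, the remaining steps are more routine but still delicate: I would (a) propagate uniform $r$-th moment control along the induction via \ref{ass:A3} and sub-Gaussianity of the elliptic entries to stay inside $\mathcal{P}_r(\RR^{p+k})$; (b) use \ref{ass:A5} to identify $\lim_n \langle \partial_1 h_k(\bs{u}^k, B_n)\rangle_n = \EE[\partial_1 h_k(Z_k, \bs{\bar b})]$, and invoke \ref{ass:A4} together with \cite[Lemma~2.2]{feng2021unifying} to guarantee invertibility of $R^k$ (so that the orthogonal projections in the Bolthausen step are well conditioned); and (c) upgrade convergence in distribution to complete convergence by combining concentration of Lipschitz functionals of $A_n$ with Borel--Cantelli, exploiting that complete convergence is preserved under equality in law. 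Lemma \ref{lem:wasserstein} then turns the convergence of $PL_r$ test averages into the desired Wasserstein statement.
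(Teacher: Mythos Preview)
Your proposal is correct and follows essentially the same route as the paper: induction on $k$, the elliptic Bolthausen conditioning of Propositions~\ref{prop:bolt}--\ref{prop:bolt++} to isolate a fresh Gaussian innovation on $(\colspan Q_k)^\perp$, Stein's identity to neutralize the Onsager-type residual, and pseudo-Lipschitz/concentration control to close the induction in $\mathcal{P}_r$. The only organizational difference is that the paper first establishes the structural identity of Proposition~\ref{prop:struct}, writing $\bu^{k+1}$ as $\sum_\ell \alpha_\ell^k \bu^\ell + \cI_k(A)$ with $\cI_k(A)=(A-\rho P_k A^\top)\barpp{\bq}{k}$ plus a residual $\Delta^{(2)}$ (shown to vanish via Stein through $\bs{v}^{k,\ell}/n\to 0$), so the ``linear combination of past terms'' is in the iterates $\bu^\ell$ rather than in $h_j(\bu^j,B)$, and the fresh Gaussian carries the Schur-complement variance $\varnew_{k+1}$ rather than the full $R_{k+1,k+1}$.
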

Proof of Theorem \ref{th:main} is provided in Section \ref{sec:proofs}. 

\begin{remark} Proof of Theorem \ref{th:main} crucially relies on the Gaussianity of matrix $A_n$'s entries and an important question would be how to relax this assumption. In Bayati et al. \cite{Bayati_2015}, AMP is extended from a GOE model to a general Wigner matrix (symmetric matrix with i.i.d. entries on and above the diagonal) using an alternative strategy based on combinatorial methods. Adaptation of this combinatorial strategy to an elliptic framework will be the subject of a future work.  
\end{remark}

\begin{remark} Using Lemma~\ref{lem:wasserstein} the convergence result can also be expressed as
\begin{equation*}
    \forall \varphi\in PL_r\left(\mathbb{R}^{p+k}\right), \quad \frac{1}{n}\sum_{i=1}^{n}\varphi\left(b^1_i,\cdots, b^p_i,u^1_i,\cdots, u^k_i\right) \xrightarrow[n\rightarrow\infty]{} \mathbb{E}\left[\varphi\left(\bar{b}_1,\cdots,\bar{b}_p,Z_1,\cdots,Z_k\right)\right].
\end{equation*}
Notice that the sum is over all integers from $1$ to $n$, and thus each iterate vector $\bs{u}^\ell$ ($1\le \ell\le k$) is flattened. One may want to get a more local information,  say the convergence of 
$$
\frac 1{|C^{(n)}|} \sum_{i\in C^{(n)}} \varphi\left(b^1_i,\cdots, b^p_i,u^1_i,\cdots, u^k_i\right)\ ,
$$
where $C^{(n)}$ is a subset of $[n]$.
\end{remark}

Corollary~\ref{cor:partialAMP} generalizes Theorem~\ref{th:main} in this direction. It relies on the following assumption.

\begin{enumerate}[label=(A2$^\prime$)]
    \item \label{ass:A2prime}
    Let $q\geq 1$ be fixed and consider the following partition of $[n]$:
\begin{equation}
\label{eq:partition}
    [n] = C_n^{(1)}\cup \cdots \cup C_n^{(q)} \quad \text{where} 
    \quad \frac{|C_n^{(j)}|}{n} \xrightarrow[n\to\infty]{} c_j \in (0,1) \quad \text{for all}\ j\in [q]\, .
\end{equation} 
There exist $q$ vectors $\left(\bar{u}_j,\bar{b}_{j,1},\cdots,\bar{b}_{j,p}\right)$ with $j\in [q]$ such that:% for all $\varphi\in PL_r(\mathbb{R}^{p+1})$,
    \begin{equation*}
        \frac 1{|C_n^{(j)}|}\sum_{i\in  C_n^{(j)}} \delta_{\left(u^0_i,b^1_i,\cdots, b^p_i\right)}\ \xrightarrow[n\rightarrow\infty]{{\mathcal P}_r(\mathbb{R}^{p+1})}\  \mathcal{L}\left(\bar{u}_j,\bar{b}_{j,1},\cdots,\bar{b}_{j,p}\right) \quad \text{(completely)}\,.
    \end{equation*}
\end{enumerate}

\begin{corollary}[blockwise AMP]
    \label{cor:partialAMP}
    Let \ref{ass:A2prime} hold and consider the framework of Theorem~\ref{th:main} except for \ref{ass:A2} (replaced by \ref{ass:A2prime}). Then for all $j\in [q]$
    \begin{equation*}
    \frac{1}{|C_n^{(j)}|}\sum_{i\in C_n^{(j)}} \delta_{\left(b^1_i,\cdots, b^p_i,u^1_i,\cdots, u^k_i\right)}\ \xrightarrow[n\rightarrow\infty]{\mathcal{P}_r\left(\RR^{p+k}\right)} \ {\mathcal L} \left(\bar{b}_{j,1},\cdots,\bar{b}_{j,p},Z_1,\cdots,Z_k\right)\quad \text{(completely)}
    \end{equation*}
where vector $\left(Z_1, \cdots, Z_k\right)$ is defined as in Theorem~\ref{th:main} and is independent from $(\bar{b}_{j,1},\cdots,\bar{b}_{j,p})$.
\end{corollary}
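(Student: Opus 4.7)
\emph{Approach.} The plan is to reduce Corollary~\ref{cor:partialAMP} to Theorem~\ref{th:main} by augmenting the parameter matrix with a column encoding block membership. Set $c_{n,i} := j$ whenever $i \in C_n^{(j)}$ and define $\widetilde{B}_n := (B_n, \bs{c}_n) \in \mathbb{R}^{n \times (p+1)}$. Extend each activation function trivially by $\widetilde{h}_k(x, \bs{b}, c) := h_k(x, \bs{b})$, so that the AMP scheme driven by $(A_n, \widetilde{\mathcal{H}}, (\bs{u}^0, \widetilde{B}_n))$ produces exactly the same iterates $\bs{u}^k$ as the original one, and the associated DE covariance matrices $R^k$ are preserved (since $\widetilde{h}_k$ does not depend on $c$).

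\emph{Verifying the augmented assumptions.} Assumptions \ref{ass:A1}, \ref{ass:A3}, \ref{ass:A4}, \ref{ass:A5} transfer trivially to $\widetilde{h}_k$. The crux is checking \ref{ass:A2}: the empirical distribution $\mu^{\bs{u}^0, \widetilde{B}_n}$ must converge completely in $\mathcal{P}_r(\mathbb{R}^{p+2})$ to the mixture
\[
\nu \;:=\; \sum_{j=1}^q c_j \,\mathcal{L}\bigl(\bar{u}_j, \bar{b}_{j,1}, \dots, \bar{b}_{j,p}\bigr) \otimes \delta_j .
\]
For any $\varphi \in PL_r(\mathbb{R}^{p+2})$ one decomposes
\[
\frac{1}{n} \sum_{i=1}^n \varphi\bigl(u_i^0, b_i^1, \dots, b_i^p, c_{n,i}\bigr) \;=\; \sum_{j=1}^q \frac{|C_n^{(j)}|}{n} \cdot \frac{1}{|C_n^{(j)}|} \sum_{i \in C_n^{(j)}} \varphi\bigl(u_i^0, b_i^1, \dots, b_i^p, j\bigr),
\]
and notes that each inner average converges completely by \ref{ass:A2prime} applied to the pseudo-Lipschitz function $(x, \bs{b}) \mapsto \varphi(x, \bs{b}, j)$ (where the $j$-dependence is bounded). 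Lemma~\ref{lem:wasserstein} then upgrades convergence against every such $\varphi$ to Wasserstein convergence, while complete convergence of a finite convex combination follows from Borel--Cantelli. Applying Theorem~\ref{th:main} to the augmented system then yields
\[
\mu^{\widetilde{B}_n, \bs{u}^1, \dots, \bs{u}^k} \;\xrightarrow[n\to\infty]{\mathcal{P}_r(\mathbb{R}^{p+1+k})}\; \nu \otimes \mathcal{N}\bigl(\bs{0}, R^k\bigr) \quad \text{(completely)}.
\]

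\emph{Extracting the blockwise statement.} To isolate block $\ell$, choose a Lipschitz tent function $\psi_\ell : \mathbb{R} \to [0,1]$ with $\psi_\ell(\ell) = 1$ and $\psi_\ell(j) = 0$ for $j \in [q] \setminus \{\ell\}$. For any $\varphi \in PL_r(\mathbb{R}^{p+k})$, the function $(\bs{b}, c, \bs{z}) \mapsto \varphi(\bs{b}, \bs{z})\, \psi_\ell(c)$ lies in $PL_r(\mathbb{R}^{p+1+k})$, whence
\[
\frac{1}{n}\sum_{i \in C_n^{(\ell)}} \varphi\bigl(b_i^1, \dots, b_i^p, u_i^1, \dots, u_i^k\bigr) \;\xrightarrow[n\to\infty]{}\; c_\ell \,\mathbb{E}\bigl[\varphi\bigl(\bar{b}_{\ell,1}, \dots, \bar{b}_{\ell,p}, Z_1, \dots, Z_k\bigr)\bigr]
\]
completely, using that $(Z_1, \dots, Z_k)$ is independent of the block label under $\nu \otimes \mathcal{N}(\bs{0}, R^k)$. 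Dividing by $|C_n^{(\ell)}|/n \to c_\ell > 0$ and invoking Lemma~\ref{lem:wasserstein} in the reverse direction delivers the claimed blockwise Wasserstein convergence.

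\emph{Main obstacle.} The delicate step is the verification of \ref{ass:A2} for the augmented system: one must propagate both weak convergence and uniform $r$-th moment control from each sub-block to the mixture, in the complete-convergence sense. Once this ``gluing'' is performed cleanly, the rest of the argument is a direct application of the global theorem together with the standard tent-function selection device, and the independence of $(Z_1, \dots, Z_k)$ from the block label is automatic from the product structure $\nu \otimes \mathcal{N}(\bs{0}, R^k)$.
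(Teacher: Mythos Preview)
Your proposal is correct and follows essentially the same route as the paper: augment $B_n$ with a block-indicator column, verify \ref{ass:A2} for the augmented system by decomposing over blocks, apply Theorem~\ref{th:main}, and then isolate each block via a tent function $\psi_\ell$ and divide by $|C_n^{(\ell)}|/n\to c_\ell$. The only cosmetic slip is that your displayed limit $\nu\otimes\mathcal{N}(\bs{0},R^k)$ should drop the $\bar{u}_j$-coordinate from $\nu$, since Theorem~\ref{th:main} concerns $\mu^{\widetilde{B}_n,\bs{u}^1,\dots,\bs{u}^k}$ rather than $\mu^{\bs{u}^0,\widetilde{B}_n,\bs{u}^1,\dots,\bs{u}^k}$.
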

Proof of Corollary~\ref{cor:partialAMP} is postponed to Section~\ref{subsec:partialAMPProof}.

%\begin{remark}
%    The assumption and the conclusion of theorem \ref{th:main} can be re-written as
 %   \begin{equation}
 %       \forall \varphi \in PL_2\left(\RR^2\right) \quad \frac{1}{N} \sum_{i=1}^N \varphi\left(u_i^0, a_i\right) \cconverge \mathbb{E}\left[\varphi(\Bar{u}, \Bar{a})\right],
  %  \end{equation}
  %  \begin{equation}
 %       \forall \varphi \in PL_2\left(\RR^{k+1}\right) \quad \frac{1}{N} \sum_{i=1}^N \varphi\left(a_i, u^1_i,\cdots, u^k_i\right) \cconverge \mathbb{E}\left[\varphi(\Bar{a}, Z_1,\cdots,Z_k)\right].
%    \end{equation}
%\end{remark}

\section{Application to theoretical ecology: equilibria of large LV systems}
\label{sec:ecology}

\subsection{Large Lotka-Volterra systems}

In an ecological system where there are interactions between $n$ species, the dynamics of these species can be modeled by a set of coupled differential equations called a Lotka-Volterra (LV) system. 

Denote by $x_i(t)$ the abundance of species $i$ at time $t$ for $i\in [n]$ and by $\bs{x}(t)=( x_i(t) )_{i\in [n]}$ the vector of abundances of all the species. Denote by $\bs{r}=(r_i)_{i\in [n]}$ the vector of intrinsic growth rates of all the species, and by $\Sigma_n$ the $n\times n$ interaction matrix between the species. 

The LV system is written
$$
\frac{d\bs{x}}{dt} (t) = \bs{x}(t) \odot 
  \left( \bs{r} - \left(I_n - \Sigma_n \right) \bs{x}(t) \right), 
  \quad 
 \bs{x}(0) \in (0,\infty)^n ,  
$$
or equivalently
$$
\frac{d x_i}{dt} (t) = x_i(t) 
  \left( r_i - x_i +\sum_{j\in [n]} \Sigma_{ij} x_j(t) \right)\,, 
  \quad 
 x_i(0)>0\qquad \textrm{for all}\ i\in [n]\, .
$$
Here $\Sigma_{ij}$ represents the effect of species $j$ on the growth of species $i$. Notice that if $\Sigma_n=0$ (no interactions), then each species is described by a logistic differential equation. The general properties of a LV system are well-known, see for instance \cite[Chapter 3]{takeuchi1996global}, notice in particular that $\bs{x}_n(t)> 0$ (componentwise) for all $t\ge 0$ if $\bs{x}(0) \in (0,\infty)^n$.

We are interested in the regime where $n$ is large and $\Sigma_n$ is random. Often, the real values of $\Sigma_n$ are out of reach and an alternative is to choose a random model which statistical properties would reflect a partial knowledge on the ecological interaction network. Among the various matrix models, the $\Elliptic(n,\rho)$ model represents a good trade-off between complexity and tractability, see 
\cite{allesina2012stability,allesina2015stability,akjouj2022complex}. We will 
therefore assume that 
\begin{equation}\label{ass:Sigma}
\kappa \sqrt{n} \Sigma_n \sim \Elliptic(n,\rho)\, ,
\end{equation}
Otherwise stated, $\Sigma_n=\frac{A_n}{\kappa}$ where $A_n$ is a normalized elliptic matrix and $\kappa>0$ is an extra parameter.
In this case, $\|\Sigma_n\| ={\mathcal O}(1)$ as $n\to \infty$ and the interaction matrix $\Sigma_n$ will have a macroscopic effect on the LV system as $n\to\infty$.

\subsection{Existence of a stable and unique equilibrium} In \cite{clenet2022equilibrium}, sufficient conditions are provided so that system \eqref{eq:LV-system} eventually admits a unique and stable equilibrium. 
\begin{proposition}[Prop. 2.3 in \cite{clenet2022equilibrium}]\label{prop:existence-equilibrium}
    Consider system \eqref{eq:LV-system} where $\kappa \sqrt{n} \Sigma_n \sim \Elliptic(n,\rho)$ and suppose that $\kappa>\sqrt{2(1+\rho)}$. Then almost surely (a.s.) eventually there exists a unique and stable equilibrium $\bs{x}^\textrm{\sc eq}=(x_i^\textrm{\sc eq}(n))_{i\in [n]}$. Otherwise stated, with probability one there exists $N$ such that for all $n\ge N$, there exists a unique $\bs{x}^\textrm{\sc eq}\in \mathbb{R}^n$ 
    such that
    $$
    \bs{x}(t) \xrightarrow[t\to\infty]{}\bs{x}^\textrm{\sc eq}\, ,
    $$
    where $\bs{x}(t)$ solves \eqref{eq:LV-system}.
\end{proposition}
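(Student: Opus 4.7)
The plan is to reduce the proposition to three well-known pieces: a spectral estimate for the symmetric part of the interaction matrix, the LCP (linear complementarity problem) characterization of equilibria, and a Goh--Takeuchi type Lyapunov argument for global stability.

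First I would control the spectrum of the symmetric part of $\Sigma_n$. Writing $W_n=\kappa\sqrt{n}\,\Sigma_n\sim\Elliptic(n,\rho)$, the matrix $W_n+W_n^\top$ has off-diagonal entries of variance $2(1+\rho)$ and diagonal entries of variance $4(1+\rho)$; up to the standard normalization, $(W_n+W_n^\top)/\sqrt{2(1+\rho)}$ is a GOE matrix. Bai--Yin type results then give
$$
\frac{\|W_n+W_n^\top\|}{\sqrt n}\xrightarrow[n\to\infty]{\text{a.s.}}2\sqrt{2(1+\rho)},
$$
so $\|(\Sigma_n+\Sigma_n^\top)/2\|\to \sqrt{2(1+\rho)}/\kappa<1$ a.s. In particular, a.s. eventually there exists $c>0$ with $I_n-(\Sigma_n+\Sigma_n^\top)/2\succeq cI_n$, i.e.\ the symmetric part of $I_n-\Sigma_n$ is uniformly positive definite.

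Second, I would deduce existence and uniqueness of $\bs{x}^\textrm{\sc eq}$ from LCP theory. Fixed points of \eqref{eq:LV-system} in $[0,\infty)^n$ are precisely the solutions of the LCP
$$
\bs{x}\ge 0,\qquad (I_n-\Sigma_n)\bs{x}-\bs{r}\ge 0,\qquad \bs{x}^\top\bigl((I_n-\Sigma_n)\bs{x}-\bs{r}\bigr)=0.
$$
When $I_n-\Sigma_n$ has positive definite symmetric part (Step~1), it is a P-matrix, and classical LCP theory (Cottle--Pang--Stone) yields a unique solution $\bs{x}^\textrm{\sc eq}$.

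Third, for the global convergence $\bs{x}(t)\to\bs{x}^\textrm{\sc eq}$ from any positive initial condition, I would employ a Goh--Takeuchi Lyapunov functional, roughly
$$
V(\bs{x})=\sum_{i:\,x_i^\textrm{\sc eq}>0}\bigl(x_i-x_i^\textrm{\sc eq}-x_i^\textrm{\sc eq}\ln(x_i/x_i^\textrm{\sc eq})\bigr)+\sum_{i:\,x_i^\textrm{\sc eq}=0} x_i .
$$
A direct computation along trajectories yields
$$
\frac{dV}{dt}=-(\bs{x}(t)-\bs{x}^\textrm{\sc eq})^\top\Bigl(\tfrac{I_n-\Sigma_n+(I_n-\Sigma_n)^\top}{2}\Bigr)(\bs{x}(t)-\bs{x}^\textrm{\sc eq})+\text{extinct-species terms},
$$
with the extinct-species contribution being non-positive thanks to complementarity. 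Step~1 then gives $dV/dt\le -c\|\bs{x}(t)-\bs{x}^\textrm{\sc eq}\|^2$, and a LaSalle-invariance argument closes the case; local stability of $\bs{x}^\textrm{\sc eq}$ follows from the same quadratic form controlling the Jacobian of the surviving sub-system. The main obstacle is the extinct-species part of Step~3: the Lyapunov function is only piecewise smooth and the boundary terms are not \emph{a priori} quadratic. Handling them cleanly requires strict complementarity, namely $r_i-(\Sigma_n\bs{x}^\textrm{\sc eq})_i<0$ whenever $x_i^\textrm{\sc eq}=0$, which is a generic property of $\bs{r}$ given $\Sigma_n$ and holds with probability one in the random setup.
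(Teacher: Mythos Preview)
The paper does not prove this proposition; it quotes it from \cite{clenet2022equilibrium} and, in the subsequent remark, only sketches the mechanism: a.s.\ eventually the interaction matrix satisfies a norm condition that places us in the scope of Takeuchi's global stability theorem \cite[Th.~3.2.1]{takeuchi1996global}. Your three-step outline is exactly this standard route and is essentially how the cited result is obtained.

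A few comments. Your Step~1 is the right computation: the relevant quantity is the spectral norm of the \emph{symmetric part} $(\Sigma_n+\Sigma_n^\top)/2$, which tends a.s.\ to $\sqrt{2(1+\rho)}/\kappa$. (The paper's remark writes $\|A_n\|/\kappa\to\sqrt{2(1+\rho)}/\kappa$, which is loose for $\rho<1$; it is the symmetric part that gives this limit and that feeds Takeuchi's criterion.) In Step~2, your phrasing ``fixed points \dots\ are precisely the LCP solutions'' is slightly off: the LCP characterizes only the \emph{saturated} equilibria (those satisfying the non-invasibility inequality). This does not matter for the argument, since the Lyapunov step identifies the LCP solution as the global attractor.

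In Step~3 you are overly cautious. With the Goh--Takeuchi functional you wrote, a direct computation gives
\[
\dot V(\bs{x}) = -(\bs{x}-\bs{x}^{\textrm{\sc eq}})^\top\!\left(I_n-\tfrac{\Sigma_n+\Sigma_n^\top}{2}\right)\!(\bs{x}-\bs{x}^{\textrm{\sc eq}})
+\sum_{i:\,x_i^{\textrm{\sc eq}}=0} x_i\bigl(r_i-[(I_n-\Sigma_n)\bs{x}^{\textrm{\sc eq}}]_i\bigr).
\]
The last sum is $\le 0$ termwise by $x_i\ge 0$ and the LCP inequality, with no need for strict complementarity. Hence $\dot V\le -c\|\bs{x}-\bs{x}^{\textrm{\sc eq}}\|^2$, which already forces $\bs{x}(t)\to\bs{x}^{\textrm{\sc eq}}$ (no LaSalle subtlety). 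Strict complementarity would only be needed if you wanted local hyperbolicity of the full Jacobian, which is not required for the statement.
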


\begin{remark}
    The fact that $\bs{x}(t)>0$ for all $t>0$ only implies that $\bs{x}^\textrm{\sc eq}\ge 0$. A vanishing component of $\bs{x}^\textrm{\sc eq}$ represents a vanishing species (whose abundance is zero at equilibrium).   
\end{remark}

\begin{remark}[extension of the definition of $\bs{x}^\textrm{\sc eq}$]
Notice that in Proposition \ref{prop:existence-equilibrium}, the equilibrium $\bs{x}^\textrm{\sc eq}$ is eventually defined. In fact, standard arguments yield 
$$
 \frac {\|A_n\|}{\kappa} \quad \xrightarrow[n\to\infty]{a.s.} \quad \frac{\sqrt{2(1+\rho)}}{\kappa} <1
$$
by assumption over $\kappa$. If the condition $ \frac {\|A_n\|}{\kappa} <1$ is met, which happens a.s. eventually, then the existence of $\bs{x}^\textrm{\sc eq}$ is granted by Takeuchi's result \cite[Th. 3.2]{takeuchi1996global}. We extend the definition of $\bs{x}^\textrm{\sc eq}$ by setting
\begin{equation}\label{eq:extension-xstar}
    \bs{ x}^\star =\begin{cases}
        \ \bs{x}^\textrm{\sc eq} &\textrm{if}\quad \frac{\| A_n\|}{\kappa}<1\,,\\
        \ 0&\textrm{else}\,.
    \end{cases}
\end{equation}
With a slight abuse of notation, one may denote $\bs{x}^\star=\bs{x}^\textrm{\sc eq}\, \mathbf{1}_{\{ \|A_n\|/\kappa\ <\ 1\}}$.    
\end{remark}

\subsection {Statistical properties of the LV equilibrium $\bs{x}^\star$}

Once the existence of the equilibrium is granted, we shall explore its statistical properties and address questions such as: What is the proportion of surviving species at equilibrium? What is the distribution of surviving species? etc.
In this regard, a key device will be the study of the empirical probability measure 
$$\mu^{\bs{x}^\star}=\frac 1n \sum_{i\in [n]} \delta_{x_i^\star(n)}\ ,
$$
and the design of an appropriate AMP algorithm. 

We first introduce a system of three equations whose solutions will play a key role in describing the statistical properties of the equilibrium.
\begin{lemma}
\label{lemma:main-system} Let $\rho\in [-1,1]$ and suppose that $\kappa>(1+\rho)/\sqrt{2}$. Consider two independent real random variables $\bar{Z}$ and $\bar{r}$ where $Z\sim {\mathcal N}(0,1)$ and $\bar r\ge 0$ with finite second moment and $\mathcal{L}(\bar{r})\neq \delta_0$. Then the system of 
equations 
\begin{subequations}
\label{eq:sys} 
\begin{align}
\kappa &= \delta + \rho \frac{\gamma} \delta\,, \label{eq:sys-delta} \\ 
\sigma^2 &= \frac{1}{\delta^2} 
   \mathbb{E}\left[\left( \sigma \bar{Z} + \bar{r} \right)_+^2\right]\,,  \label{eq:sys-sigma}\\ 
\gamma &= \mathbb{P} \left[ \sigma \bar{Z} + \bar{r} > 0 \right]\,,\label{eq:sys-gamma}
\end{align}
\end{subequations}
admits an unique solution $(\delta,\sigma,\gamma)$ in
$(1/\sqrt{2},\infty) \times (0,\infty) \times (0,1)$.  
\end{lemma}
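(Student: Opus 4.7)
The plan is to reduce the system of three equations to a single equation in one variable, then apply the intermediate value theorem together with a strict monotonicity argument.

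Set $\tau = 1/\sigma > 0$, and let $\Phi$ and $\phi$ denote the standard Gaussian CDF and density respectively. A direct computation gives $g(t) := \mathbb{E}[(\bar{Z}+t)_+^2] = (1+t^2)\Phi(t) + t\phi(t)$, with $g'(t) = 2t\Phi(t)+2\phi(t)$ and $g''(t) = 2\Phi(t)$. Introduce
\[
\gamma(\tau) := \mathbb{E}\left[\Phi(\tau\bar{r})\right],\qquad \Delta(\tau) := \mathbb{E}\left[g(\tau\bar{r})\right],\qquad \Psi(\tau) := \mathbb{E}\left[\bar{r}^2\Phi(\tau\bar{r})\right].
\]
Equation \eqref{eq:sys-gamma} gives $\gamma = \gamma(\tau)$, while \eqref{eq:sys-sigma} multiplied by $\sigma^2$ yields $\delta = \sqrt{\Delta(\tau)}$ (positive root since $\delta>0$). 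Substituting into \eqref{eq:sys-delta} reduces the system to the single equation
\[
\Xi(\tau) := \sqrt{\Delta(\tau)} + \frac{\rho\,\gamma(\tau)}{\sqrt{\Delta(\tau)}} = \kappa,\qquad \tau\in(0,\infty).
\]

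\emph{Existence.} Since $g$ and $\Phi$ are strictly increasing on $\mathbb{R}$ and $\mathbb{P}(\bar r>0)>0$, the maps $\gamma$ and $\Delta$ are smooth and strictly increasing, with $\gamma(0^+) = \Delta(0^+) = 1/2$ and $\Delta(\infty) = \infty$. Hence $\Xi$ is continuous, $\Xi(0^+) = (1+\rho)/\sqrt{2}$, and $\Xi(\tau)\to\infty$ as $\tau\to\infty$ (the first term dominates since $\gamma\le 1$). The hypothesis $\kappa > (1+\rho)/\sqrt{2}$ and the intermediate value theorem yield at least one $\tau^\star\in(0,\infty)$ solving $\Xi(\tau^\star) = \kappa$.

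\emph{Uniqueness.} I would show $\Xi$ is strictly increasing on $(0,\infty)$. The sign of $\Xi'(\tau)$ coincides with that of $N(\tau) := \Delta'(\Delta - \rho\gamma) + 2\rho\gamma'\Delta$. From the explicit form of $g$ one reads off the two identities
\[
\Delta(\tau) = \gamma(\tau) + \tau\gamma'(\tau) + \tau^2\Psi(\tau),\qquad \Delta'(\tau) = 2\gamma'(\tau) + 2\tau\Psi(\tau).
\]
Substituting these into $N$ and expanding leads to
\[
N(\tau) = 2\gamma\gamma' + 2(1-\rho)\tau\gamma\Psi + 2(1+\rho)\tau(\gamma')^2 + 2(2+\rho)\tau^2\gamma'\Psi + 2\tau^3\Psi^2,
\]
and every coefficient is non-negative for $\rho\in[-1,1]$, with $2\gamma\gamma' > 0$ strictly for $\tau>0$. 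Hence $\Xi'(\tau)>0$ and the root $\tau^\star$ is unique. The associated triple $(\sqrt{\Delta(\tau^\star)},\,1/\tau^\star,\,\gamma(\tau^\star))$ automatically satisfies $\delta>1/\sqrt{2}$, $\sigma>0$, and $\gamma\in(1/2,1)\subset(0,1)$, as required.

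The main obstacle is the monotonicity of $\Xi$: because $\gamma$ and $\Delta$ are both strictly increasing in $\tau$, the two contributions to $N(\tau)$ can a priori cancel, especially when $\rho<0$. The identity $\Delta'=2\gamma'+2\tau\Psi$, which stems from the explicit form of $g$ (equivalently from $g''(t)=2\Phi(t)$), is the decisive algebraic cancellation that renders every coefficient in the final expansion of $N$ simultaneously non-negative throughout the full range $\rho\in[-1,1]$.
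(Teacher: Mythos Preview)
Your argument is correct and is organized more cleanly than the paper's. The paper parameterizes the problem by $\delta$: for each $\delta>1/\sqrt{2}$ it invokes an external result (Lemma~2 of \cite{akjouj2023equilibria}) to obtain the implicitly defined $\sigma(\delta)$, sets $\gamma(\delta)=\mathbb{P}(\sigma(\delta)\bar Z+\bar r>0)$, and studies $h(\delta)=\delta+\rho\gamma(\delta)/\delta$. Monotonicity of $h$ for $\rho<0$ then requires an implicit-differentiation computation (their Lemmas~A.1--A.2, with the change of variable $x=-1/\sigma$) reducing to the inequality $\gamma'(\delta)<\delta$; the case $\rho\ge 0$ is handled separately by reference to \cite{akjouj2023equilibria}.

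Your choice to parameterize by $\tau=1/\sigma$ is the key simplification: it makes $\delta=\sqrt{\Delta(\tau)}$ and $\gamma=\gamma(\tau)$ \emph{explicit} functions, eliminates the appeal to the external lemma, and renders the monotonicity computation a direct algebraic expansion. The identity $\Delta'=2\gamma'+2\tau\Psi$ (equivalently $g''=2\Phi$) is exactly what makes every coefficient in $N(\tau)$ non-negative simultaneously for the whole range $\rho\in[-1,1]$, so no case distinction is needed. The boundary values $\Xi(0^+)=(1+\rho)/\sqrt{2}$ and $\Xi(\infty)=\infty$ coincide with the paper's limits for $h$, as they must. Your final check that the resulting triple lies in the prescribed domain (in particular $\delta=\sqrt{\Delta(\tau^\star)}>1/\sqrt{2}$ because $\Delta$ is strictly increasing with $\Delta(0^+)=1/2$) closes the bijection between solutions of $\Xi(\tau)=\kappa$ and solutions of the original system, so uniqueness follows.
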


Proof of Lemma \ref{lemma:main-system} is deferred to Appendix \ref{app:proof-system}. We follow the lines of the corresponding proof for the symmetric matrix case \cite[Section 3.2]{akjouj2022complex} but the case $\rho<0$ requires new arguments.

\begin{remark}
    Notice that condition $\kappa>(1+\rho)/\sqrt{2}$ in Lemma \ref{lemma:main-system} is weaker than condition
    $\kappa > \sqrt{2(1+\rho)}$ provided in Proposition \ref{prop:existence-equilibrium} unless $\rho=-1$. Otherwise stated if $\kappa$ satisfies the condition
    $$
    \frac {1+\rho}{\sqrt{2}} <\kappa \le \sqrt{2(1+\rho)}\,,\quad (\rho>-1)
    $$
    the system may admit a unique solution but the existence of a stable equilibrium is not granted.
\end{remark}
We can now state the main result of this section.

\begin{theorem}
\label{th:main-LV} 
Let $A_n$ be a normalized elliptic matrix, $\bs{r}\in \mathbb{R}^n$ a random vector independent from $A_n$ satisfying:
$$
\mu^{\bs{r}}  \ \xrightarrow[n\to\infty]{\mathcal{P}_2(\RR)} \ {\mathcal L}(\bar{r})\quad \text{(completely)}\,,
$$
where $\bar r \geq 0$ is a real valued random variable with finite second moment and $\mathcal{L}(\bar{r})\neq \delta_0$. 
Let $\bar{Z}$ be a  $\mathcal{N}(0, 1)$ random variable independent from $\bar r$.

%\jamal{pourquoi $P_2$ et pas $P_r$? attention conflit de notation avec $r$}

Let $\kappa>\sqrt{2(1+\rho)}$ and consider the LV system \eqref{eq:LV-system} where $\Sigma_n=\frac {A_n}\kappa$. 
Let $\bs{x}^\star$ be defined by \eqref{eq:extension-xstar} and 
$(\delta, \sigma, \gamma)\in (1/\sqrt{2},\infty) \times (0,\infty) \times (0,1)$ be the solution of \eqref{eq:sys} in Lemma \ref{lemma:main-system}, then
\begin{equation}
\label{cvg-muN} 
\mu^{\bs{x}^\star} \ \xrightarrow[n\to\infty]{\mathcal{P}_2(\RR)} \
 \pi:=\mathcal{L}\left( \left( 1 + \rho \gamma/\delta^2 \right) 
  \left( \sigma \bar Z + \bar r \right)_+ \right) \quad \text{(completely)}\ .
\end{equation} 
\end{theorem}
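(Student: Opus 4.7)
The plan is to design an AMP algorithm whose proxy iterates $\bs{x}^k$ admit a Gaussian description via Theorem~\ref{th:main} and simultaneously converge, as $k\to\infty$, to the Lotka--Volterra equilibrium $\bs{x}^\star$. Given the triple $(\delta,\sigma,\gamma)$ produced by Lemma~\ref{lemma:main-system}, I would take $p=1$, $B_n=\bs{r}$, $\bs{u}^0=\bs{0}$, and the $k$-independent activation
\[
h(u,r)=\frac{(u+\kappa r)_+}{\delta},\qquad \partial_1 h(u,r)=\frac{\mathbf{1}_{\{u+\kappa r>0\}}}{\delta}.
\]
Assumptions \ref{ass:A1}--\ref{ass:A5} are readily checked since $h$ is Lipschitz, non-constant in its first argument, and $\partial_1 h$ is continuous almost everywhere. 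Introducing the proxy iterate $\bs{x}^{k+1}:=h(\bs{u}^k,\bs{r})=(\bs{u}^k+\kappa\bs{r})_+/\delta$, the recursion \eqref{eq:AMP_elliptic} rewrites as
$\bs{u}^{k+1}=A_n\bs{x}^{k+1}-\rho\,\langle\partial_1 h(\bs{u}^k,\bs{r})\rangle_n\,\bs{x}^k$, structurally reminiscent of the LV fixed-point equation.

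Applying Theorem~\ref{th:main} yields $\mu^{\bs{r},\bs{u}^1,\ldots,\bs{u}^k}\to\mathcal{L}(\bar r,Z_1,\ldots,Z_k)$ completely in $\mathcal{P}_2(\RR^{k+1})$. The scalar DE for the marginal variance $v_k=R^k_{kk}$ reduces to $v_{k+1}=\EE[(Z_{v_k}+\kappa\bar r)_+^2]/\delta^2$ with $Z_{v_k}\sim\cN(0,v_k)$. Monotonicity of this one-dimensional map, combined with the large-$v$ asymptotics $v_{k+1}\sim v_k/(2\delta^2)$ and the bound $\delta>1/\sqrt{2}$ from Lemma~\ref{lemma:main-system}, gives a unique attracting fixed point $v_\star=\kappa^2\sigma^2$, which is exactly \eqref{eq:sys-sigma} under the identification $Z_{v_\star}=\kappa\sigma\bar Z$. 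In parallel, $\langle\partial_1 h(\bs{u}^k,\bs{r})\rangle_n\to\mathbb{P}(\kappa\sigma\bar Z+\kappa\bar r>0)/\delta=\gamma/\delta$, matching \eqref{eq:sys-gamma}. Pushing forward by $h$, one obtains
\[
\mu^{\bs{x}^{k+1}}\xrightarrow[n\to\infty]{\mathcal{P}_2(\RR)}\mathcal{L}\!\left(\frac{(\kappa\sigma\bar Z+\kappa\bar r)_+}{\delta}\right)=\mathcal{L}\!\left(\frac{\kappa}{\delta}(\sigma\bar Z+\bar r)_+\right)=\pi\qquad\text{as }k\to\infty,
\]
where the identity $\kappa/\delta=1+\rho\gamma/\delta^2$ is precisely \eqref{eq:sys-delta}.

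The main obstacle is the exchange of limits, namely controlling $n^{-1}\|\bs{x}^\star-\bs{x}^{k+1}\|^2$ uniformly in $n$ as $k\to\infty$. The activation has been tailored so that any formal fixed point $(\bs{u}^\star,\bs{x}^\star_{\mathrm{AMP}})$ of the AMP recursion satisfies
\[
\bs{u}^\star=A_n\bs{x}^\star_{\mathrm{AMP}}-(\rho\gamma/\delta)\bs{x}^\star_{\mathrm{AMP}},\qquad \bs{x}^\star_{\mathrm{AMP}}=(\bs{u}^\star+\kappa\bs{r})_+/\delta,
\]
which, using $\kappa=\delta+\rho\gamma/\delta$, collapses to the LV equation $\bs{x}^\star_{\mathrm{AMP}}=(\bs{r}+A_n\bs{x}^\star_{\mathrm{AMP}}/\kappa)_+$; by Proposition~\ref{prop:existence-equilibrium} and uniqueness on $\{\|A_n\|/\kappa<1\}$, $\bs{x}^\star_{\mathrm{AMP}}=\bs{x}^\star$. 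To make the double-limit rigorous, I would follow the Cauchy-type strategy of \cite{akjouj2023equilibria}: derive in the $n\to\infty$ limit a contractive Wasserstein recursion $\mathcal{W}(\mu^{\bs{x}^{k+1}},\mu^{\bs{x}^{k+2}})\le q\,\mathcal{W}(\mu^{\bs{x}^k},\mu^{\bs{x}^{k+1}})$ with some $q<1$ inherited from the local DE contraction at $v_\star$, deduce a Cauchy property of $(\mu^{\bs{x}^k})_k$, and identify its limit with $\mu^{\bs{x}^\star}$ thanks to uniqueness of the LV equilibrium together with the a.s.\ bound $\|A_n\|\to\sqrt{2(1+\rho)}<\kappa$. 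Complete convergence is finally inherited from the same mode granted by Theorem~\ref{th:main}. The non-Hermitian structure of $A_n$ and the $\rho$-correction in the Onsager term make this last step substantially more delicate than in the symmetric case.
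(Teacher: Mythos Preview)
Your AMP design, DE analysis, and fixed-point identification are essentially the paper's approach, up to an equivalent reparametrization of the activation: your $h(u,r)=(u+\kappa r)_+/\delta$ with $B_n=\bs{r}$ versus the paper's $h(u,a)=(u+a)_+/\delta$ with $\bs{a}=(\kappa/\delta)\bs{r}$. The scalar DE fixed point $v_\star=\kappa^2\sigma^2$ and the identity $\kappa/\delta=1+\rho\gamma/\delta^2$ are handled correctly.

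The gap is in the final step, and your description of it is not what the argument in \cite{akjouj2023equilibria} actually is. A Wasserstein-Cauchy property of the \emph{limiting measures} $(\lim_n\mu^{\bs{x}^k})_k$, together with ``uniqueness of the LV equilibrium'', does not identify the limit with $\mu^{\bs{x}^\star}$: knowing that $\bs{x}^\star$ is the unique formal fixed point of the recursion does not force the iterates to approach it, and says nothing about $n^{-1}\|\bs{x}^\star-\bs{x}^{k+1}\|^2$. The paper's actual bridge is an explicit \emph{perturbed LCP}. Setting $\bs{\xi}^k=\bs{u}^k+\bs{a}$ and $\bs{z}^k=\bs{\xi}^k_+-(1+\rho\gamma/\delta^2)^{-1}\bs{\xi}^k_-$, an algebraic computation shows that $\bs{z}^k_+\in\LCP(I-\Sigma_n,-\bs{r}-\tilde{\bs{\varepsilon}}^k)$, where the perturbation $\tilde{\bs{\varepsilon}}^k$ is built from $\bs{\xi}^k-\bs{\xi}^{k\pm1}$ and $\gamma-\gamma^k$. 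Since $\bs{x}^\star\in\LCP(I-\Sigma_n,-\bs{r})$ and LCP solutions are Lipschitz in the data on the event $\{\|A_n\|/\kappa<1\}$, one obtains $n^{-1/2}\|\bs{x}^\star-\bs{z}^k_+\|\le C\,n^{-1/2}\|\tilde{\bs{\varepsilon}}^k\|$. The Cauchy ingredient you allude to is then used one level down: it controls $\limsup_n n^{-1}\|\bs{\xi}^{k+1}-\bs{\xi}^k\|^2$ via the full off-diagonal DE $R^k_{ij}$ (not just the marginal variances $v_k$), and this drives $n^{-1/2}\|\tilde{\bs{\varepsilon}}^k\|\to0$ as $k\to\infty$ after $n\to\infty$. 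Without this perturbed-LCP device and the LCP stability bound, the exchange of limits that you correctly flag as the main obstacle remains unproven.
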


Proof of Theorem \ref{th:main-LV} is outlined in Section \ref{subsec:AMP-for-LV}. It closely follows the strategy developed in \cite{akjouj2023equilibria} in the context of a symmetric interaction matrix (see in particular the outline of the proof in \cite[Section 3.1]{akjouj2023equilibria}). This strategy is adapted to the elliptic case with the help of Theorem \ref{th:AMP_conv} and the existence of a unique solution to \eqref{eq:sys}.

\begin{remark}[Proportion of surviving species]
\label{rem:surviving-species}
    Strictly speaking, the proportion of surviving species at equilibrium is given by:
    $$
    \mu^{\bs{x}^\star}(0,\infty) = \frac 1n \sum_{i\in [n]} \bs{1}_{(0,\infty)}(x_i^\star)\, .
    $$
    As a consequence, convergence \eqref{cvg-muN} in Theorem \ref{th:main-LV} does not apply for $x\mapsto \bs{1}_{(0,\infty)}(x)$ is not continuous at zero, a discontinuity point of the limiting cumulative function. However for any continuous function $f_{\varepsilon}$ satisfying 
    $$
    f_{\varepsilon}(x) = 
    \begin{cases}
         0 & \textrm{for}\ x\le 0\\
        1& \textrm{for}\ x\ge \varepsilon\
    \end{cases}$$
 for a small $\varepsilon > 0$, one has 
    $\frac 1n \sum_i f_{\varepsilon}(x_i^\star)\xrightarrow[n\to\infty]{} \mathbb{E} f_{\varepsilon} \left[\left( 1 + \rho \gamma/\delta^2 \right) 
  \left( \sigma \bar Z + \bar r \right)_+\right]$
  and
  $$
  \mathbb{E} f_{\varepsilon} \left[\left( 1 + \rho \gamma/\delta^2 \right) 
  \left( \sigma \bar Z + \bar r \right)_+\right]\xrightarrow[\varepsilon\to 0]{} \gamma=\mathbb{P}( \sigma \bar Z + \bar r>0) \, .$$
  Hence $\gamma$ appears as a good approximation of the proportion of surviving species. This is confirmed by simulations, see Figure~\ref{fig:prop}.
\end{remark}

\begin{remark}[Distribution of surviving species] 
\label{rem:distribution-surv-species} Denote by $\bs{s}(\bs{x}^\star)$ the subvector of $\bs{x}^\star$ with the positive components of $\bs{x}^\star$. Its dimension $|\bs{s}(\bs{x}^\star)|$ is random and the distribution of the surviving species is given by:
$$
\mu^{\bs{s}(\bs{x}^\star)} = \frac 1{|\bs{s}(\bs{x}^\star)|}\sum_{i\in [|\bs{s}(\bs{x}^\star)|]} \delta_{[\bs{s}(\bs{x}^\star)]_i}\ .
$$
A formal convergence of $\mu^{\bs{s}(\bs{x}_n^\star)}$ is out of reach (see the arguments in Remark \ref{rem:surviving-species}) but a good proxy should be 
\begin{equation}\label{eq:law-surviving}
{\mathcal L} \left( \left( 1 + \rho \gamma/\delta^2 \right) 
  \left( \sigma \bar Z + \bar r \right)_+ \ \bigg| \  \sigma \bar{Z}+\bar{r}>0\right)\ ,
\end{equation}
the density of which is explicit. Let
$$
f_{\sigma \bar{Z} +\bar{r}}(y) = \int_{\mathbb{R}} \frac{e^{-\frac{(y-r)^2}{2\sigma^2}}}{\sqrt{2\pi}\, \sigma} \mathbb{P}_{\bar{r}}(dr)\quad \textrm{and}\quad 1+\rho\frac{\gamma}{\delta^2} = \frac{\kappa}{\delta} \ ,
$$
then the density of \eqref{eq:law-surviving} denoted by $f_{\textrm{surv}}$ is written
\begin{equation}\label{eq:density-surviving}
f_{\textrm{surv}}(y)\ =\ \frac{\delta}{\kappa} f_{\sigma \bar{Z} +\bar{r}}\left(\frac{\delta\,y}{\kappa}\right) \frac{\bs{1}_{(y>0)}}{\gamma}\, .
\end{equation}
One can now easily express the relation between $\pi$ as defined in \eqref{cvg-muN} and $f_{\textrm{surv}}$:
\begin{equation}\label{eq:pi-fsurv}
\pi(dy) = \gamma f_{\textrm{surv}}(y)\, dy + (1-\gamma)\delta_0(dy)\, .
\end{equation}
Notiece that if the r.v. $\bar{r}$ is constant then $f_{\textrm{surv}}$ is the density of a truncated Gaussian distribution. 
\end{remark}
Simulations show a very good fit between this distribution and the histogram associated to $\mu^{\bs{s}(\bs{x}^\star)}$ for large $n$, see Fig.~\ref{fig:dist}.

\begin{figure}
\centering
\begin{subfigure}{0.45\textwidth}
    \includegraphics[scale=0.45]{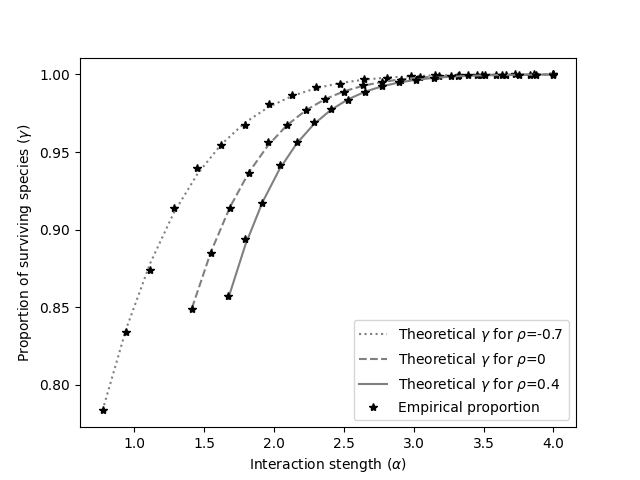}
    \caption{Experimental proportions of surviving species vs theoretical values $\gamma$ for three correlation coefficients $\rho= -0.7, 0, 0.4$ w.r.t. the interaction strength ($\kappa$).}
    \label{fig:prop}
\end{subfigure}
\hspace{0.45cm}
\begin{subfigure}{0.45\textwidth}
    \includegraphics[scale=0.45]{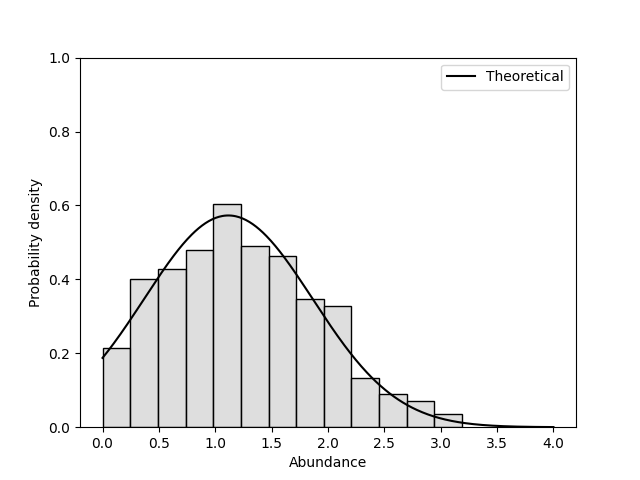}
    \caption{Histogram of positive abundances vs the density function $f_{\textrm{surv}}$ described in \eqref{eq:density-surviving} for $\rho=0.4$ with the interaction strength fixed to $\kappa=2$.}
    \label{fig:dist}
\end{subfigure}

\begin{subfigure}{0.45\textwidth}
    \includegraphics[scale=0.45]{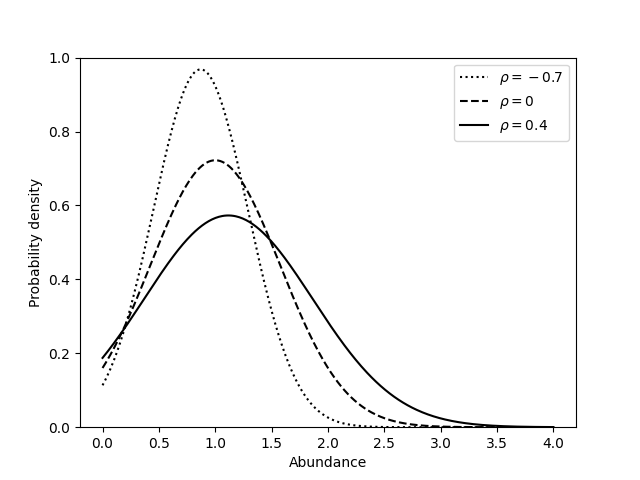}
    \caption{Plot of the density function $f_{\textrm{surv}}$ for $\rho = -0.7, 0, 0.4$.}
    \label{fig:truncdist}
\end{subfigure}
        
\caption{Comparison between the theoretical solution of the fixed point equations \eqref{eq:sys} and their empirical Monte Carlo counterpart obtained by computing equilibria $\bs{x}^\star$ for various realizations of matrix $A$. Every $\bs{x}^\star$ is the solution of a Linear Complementarity Problem (see \eqref{eq:LCP-first-sight}) and is thus computed by Lemke algorithm. For Figure~\ref{fig:prop} and Figure~\ref{fig:dist} we chose a matrix of size $200$ and we fixed the number of Monte Carlo experiments to $100$ and $500$ respectively. }
\label{fig:figures}
\end{figure}

%\begin{remark}[Distribution of surviving species - continued]
%    \jamal{à garder si les simus sont concluantes}
%    Another quantity of interest and easily accessible by simulation is the distribution of a given component of the equilibrium vector $\bs{x}^\star_n=(x_i^\star)$, say ${\mathcal L}(x_1^\star)$. This quantity is not approachable by our method which focuses on the empirical measure $\mu^{\bs{x}^\star_n}$. However computations of $\mathbb{P}(x_1^\star>0)$ and ${\mathcal L}(x^\star_1\mid x^\star_1>0)$ can be performed by Monte-Carlo simulations and respectively compared to $\gamma=\mathbb{P}(\sigma\bar{Z} +\bar{r}>0)$ and the density in \eqref{eq:density-surviving}, see Fig. YY.
%\end{remark}

\subsection{Propagation of chaos}
Combining Theorem \ref{th:main-LV} with propagation of chaos type results
\cite{sznitman89}, we are able, with extra exchangeability assumptions
on the vector $\bs{r}\in \mathbb{R}^n$, to describe the limiting behaviour of
individual abundances. 

We obtain two kinds of results. If $\bs{r}$ is exchangeable, then the distribution of every individual abundance converges toward the same limit given by $\pi$ in \eqref{cvg-muN}, see Corollary \ref{coro:chaos-prop-I}. If $\bs{r}$ is blockwise exchangeable (to be defined), then within each block each abundance can have a specific limit, see Theorem~\ref{th:chaos-prop-II}, which may differ from $\pi$.

\begin{corollary}
\label{coro:chaos-prop-I} 
Consider the framework of Theorem \ref{th:main-LV} and assume moreover that  
vector $\bs{r}$ is exchangeable. Let $\bs{x}^\star =(x_i^\star(n))$ be defined by \eqref{eq:extension-xstar} and recall the definition of the distribution $\pi$:
$$
 \pi=\mathcal{L}\left( \left( 1 + \rho \gamma/\delta^2 \right) 
  \left( \sigma \bar Z + \bar r \right)_+ \right)\, .
$$
Then $( x_1^\star(n), \cdots,  x_n^\star(n))$ is an exchangeable sequence and for any fixed $K\ge 1$,  
%Write $x_i^\star(n)$ for $x_i^\star$ to emphasize the dependence in $n$ and let $M\ge 1$ be fixed, then
$$
( x_1^\star(n),\cdots,  x_K^\star(n)) \xrightarrow[n\to\infty]{\mathcal L} \pi^{\otimes K}\, .
$$
\end{corollary}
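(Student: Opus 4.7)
The plan is to derive this corollary from Theorem~\ref{th:main-LV} by combining an exchangeability argument with Sznitman's classical propagation of chaos theorem.

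First I would establish exchangeability of the equilibrium vector $\bs{x}^\star$. For any permutation $\tau$ of $[n]$ with matrix $P_\tau$, the joint law of $(A_n,\bs{r})$ is invariant under the map $(A_n,\bs{r})\mapsto (P_\tau A_n P_\tau^\top, P_\tau \bs{r})$: the Elliptic distribution is preserved by simultaneous permutation of rows and columns since the pairs $(A_{ij},A_{ji})$ remain i.i.d.\ with the same joint Gaussian law, and $\bs{r}$ is exchangeable by assumption. Writing the LV dynamics \eqref{eq:LV-system} with permuted data and making the change of variable $\bs{y}(t)=P_\tau \bs{x}(t)$ shows that the equilibrium map is equivariant: on the event $\{\|A_n\|/\kappa<1\}$, uniqueness of the stable equilibrium (Proposition~\ref{prop:existence-equilibrium}) yields $P_\tau\, \bs{x}^\textrm{\sc eq}(A_n,\bs{r})=\bs{x}^\textrm{\sc eq}(P_\tau A_n P_\tau^\top, P_\tau \bs{r})$. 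Since $\|P_\tau A_n P_\tau^\top\|=\|A_n\|$, the same equivariance holds for the extension $\bs{x}^\star$ defined in \eqref{eq:extension-xstar}, and therefore $P_\tau\, \bs{x}^\star \laweq \bs{x}^\star$.

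Second, I would invoke Sznitman's equivalence theorem \cite{sznitman89}: for an exchangeable array $(X_1^n,\ldots,X_n^n)$ in a Polish space, convergence in distribution of the empirical measure $\frac{1}{n}\sum_i \delta_{X_i^n}$ to a \emph{deterministic} probability measure $\pi$ is equivalent to $\pi$-chaoticity, that is, $(X_1^n,\ldots,X_K^n)\xrightarrow[n\to\infty]{\mathcal L}\pi^{\otimes K}$ for every fixed $K\ge 1$. I would apply this with $X_i^n=x_i^\star(n)$: exchangeability follows from Step~1, and the convergence of the empirical measure is delivered by Theorem~\ref{th:main-LV} (Wasserstein $\mathcal{P}_2$ convergence implies in particular weak convergence of the random measure to the constant $\pi$). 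Sznitman's equivalence then yields the stated propagation of chaos.

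The main obstacle is the equivariance argument in Step~1, which requires tracking how the extension \eqref{eq:extension-xstar} interacts with the permutation action and uses uniqueness of the stable equilibrium in an essential way. Once exchangeability is secured, Step~2 is a direct application of a well-established result and introduces no genuinely new difficulty; the blockwise analogue (Theorem~\ref{th:chaos-prop-II}) is precisely where the generalized Proposition~\ref{prop:sznitman_generalized} is needed, but here the classical statement suffices.
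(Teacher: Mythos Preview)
Your proposal is correct and follows essentially the same approach as the paper: verify exchangeability of $\bs{x}^\star$, then apply Sznitman's classical propagation of chaos together with the empirical-measure convergence from Theorem~\ref{th:main-LV}. The only difference is in the exchangeability step: you argue via equivariance of the equilibrium map under permutation using the uniqueness of the globally attracting equilibrium, whereas the paper argues via the LCP fixed-point characterization $\bs{z}=\Sigma\bs{z}_+ + \bs{r}$ and propagates the permutation invariance through the Picard iterates $\bs{z}(p)$ before passing to the limit; both routes rest on the same invariance $(A_n,\bs{r})\laweq(P_\tau A_n P_\tau^\top, P_\tau\bs{r})$ and the same treatment of the event $\{\|A_n\|/\kappa<1\}$.
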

Proof of Corollary \ref{coro:chaos-prop-I} is postponed to Section \ref{subsec:proof-chaos-prop-I}.
%\jamal{question: ne pourrait-on pas avoir une convergence un peu plus forte? genre convergence du $r$ième moment aussi?}
\begin{remark}
    This result should be compared to Geman and Hwang \cite[Theorem 3]{geman1982chaos}. 
\end{remark}

For the next result, we need some extra definitions. Let $q\ge 1$ be a fixed integer. Consider $q$ sequences $n_1(n),\cdots, n_q(n)$ satisfying
\begin{equation}\label{eq:nj}
n_1+\cdots+ n_q = n \qquad \text{and}\qquad \frac{n_j}n \xrightarrow[n\to\infty]{} c_j\in (0,1)\,,\quad j\in [q]\,.
\end{equation}
Consider the following partition of $[n]$:
\begin{equation}\label{eq:Cj}
    \begin{cases}
C_n^{(1)}&=\{ 1,\cdots, n_1\}\,,\\
C_n^{(j)}&=\{n_1+\cdots +n_{j-1}+1,\ \cdots\ , n_1+\cdots+n_j\}\,,\quad 1<j\le q\, 
\end{cases}
\end{equation}
so that $$
[n]=\bigcup_{i=1}^q C_n^{(j)}\qquad \textrm{and}\qquad |C_n^{(j)}|=n_j\,,\quad j\in [q]\,.
$$

Any $\mathbb{R}^n$-valued vector $\bs{v}$ can be decomposed into $q$ $\mathbb{R}^{n_j}$-valued subvectors $\bs{v}^{(j)}$:
$$
\bs{v}=(\bs{v}^{(j)},\ j\in [q])\qquad \text{where}\qquad \bs{v}^{(j)} = (v_k)_{k\in C_n^{(j)}}\, .
$$
Given a permutation $\sigma_j\in {\mathcal S}_{n_j}$, we denote by $\bs{v}^{(j,\sigma_j)}$ the vector $\bs{v}^{(j)}$ where each component has been permuted according to $\sigma_j$.

Consider now $\sigma=(\sigma_1,\cdots, \sigma_q)$ where $\sigma_j\in {\mathcal S}_{n_j}$. Given a vector $\bs{v}\in \mathbb{R}^n$ we denote by $\bs{v}^\sigma$ the vector
$$
\bs{v}^\sigma = \left( \bs{v}^{(j,\sigma_j)};\ j\in [q]\,\right)\,.
$$ 
\begin{example} Consider $n=6$ and a number of blocks $q=2$ such that $n_1 = 4$ and $n_2 = 2$. Let $\bs{v} = \left(v_1,v_2,v_3,v_4,v_5,v_6\right)^\top$ and consider two permutations $$
\sigma_1 = \begin{pmatrix}
    1 & 2 & 3 & 4 \\
    3 & 4 & 1 & 2
\end{pmatrix}\in \mathcal{S}_4\quad \textrm{and}\quad\sigma_2 = \begin{pmatrix}
    1 & 2  \\
    2 & 1
\end{pmatrix}\in \mathcal{S}_2\, .
$$
Then $\bs{v}^{(1,\sigma_1)} = \left(v_3,v_4,v_1,v_2\right)^\top$ and $\bs{v}^{(2,\sigma_2)} = \left(v_6,v_5\right)^\top$. Let $\sigma = \left(\sigma_1, \sigma_2\right)$, then
$ \bs{v}^\sigma = \left(v_3,v_4,v_1,v_2,v_6,v_5\right)^\top. $
\end{example}
\begin{definition}\label{def:blockEx} A random $\mathbb{R}^n$-valued vector $\bs{v}$ is blockwise exchangeable with respect to the partition defined in \eqref{eq:Cj} if for any $\sigma=(\sigma_1,\cdots, \sigma_q)\in {\mathcal S}_{n_1}\times \cdots\times {\mathcal S}_{n_q}$,
    $$
    \bs{v}^{\sigma}\ \eqlaw\  \bs{v}\, .
    $$
    Otherwise stated, for any bounded continuous test functions $\varphi_j:\mathbb{R}^{n_j}\to \mathbb{R}$ with $j\in [q]$, 
    $$
    \mathbb{E}\left(\varphi_1\left(\bs{v}^{(1)}\right)\times \cdots \times \varphi_q\left(\bs{v}^{(q)}\right)\right)
    = \mathbb{E}\left( \varphi_1\left(\bs{v}^{(1,\sigma_1)}\right)\times \cdots \times \varphi_q\left(\bs{v}^{(q,\sigma_q)}\right) \right)\,.
    $$
    If there is no confusion, we simply say that $\bs{v}$ is blockwise exchangeable.
\end{definition}

We are now in position to state our final result. 
\begin{theorem}
    \label{th:chaos-prop-II}
    Consider the framework of Theorem \ref{th:main-LV} and let $q\ge 1$ be a fixed integer. Let $(n_j, j\in [q])$ and $(C_n^{(j)},j\in [q])$ be given by \eqref{eq:nj}-\eqref{eq:Cj}. Let $\bs{x}^\star$ be defined in \eqref{eq:extension-xstar}. Assume that $\bs{r}$ is blockwise exchangeable and that for all $j\in [q]$
    \begin{equation*}
    \label{eq:r-block-conv}
    \mu^{\bs{r^{(j)}}} \xrightarrow[n\to\infty]{{\mathcal P}_2(\mathbb{R})}\bar{r}_j\quad \text{(completely)}     
    \end{equation*}
    where $\bar{r}_j\ge  0$ is a random variable with finite second moment.

    Then for any sequence $\psi_n\in C_n^{(j)}$ where $j\in [q]$ is fixed,
    $$
    x^\star_{\psi_n} \xrightarrow[n\to\infty]{\mathcal L}  \pi_j:=\mathcal{L}\left( \left( 1 + \rho \gamma/\delta^2 \right) 
  \left( \sigma \bar Z + \bar r_j \right)_+ \right)\, .
    $$
    Moreover, let $k_1,\cdots, k_q\ge 1$ be fixed integers and consider subsets 
    $$
    {\mathcal K}_n^{(j)}\subset C_n^{(j)} \quad \text{with}\quad  
    |{\mathcal K}_n^{(j)}|=k_j\quad \text{and}\quad k=k_1+\cdots +k_q\ ,
    $$
    then the $\mathbb{R}^k$-valued vector
    $$
    \bs{ x}^\star_{[k_1,\cdots, k_q]} := \left( x^\star_\ell\,,\ \ell\in {\mathcal K}_n^{(1)}\cup\cdots \cup {\mathcal K}_n^{(q)}\right)
    $$
    satisfies
    $$
    \bs{\tilde x}^\star_{[k_1,\cdots, k_q]}
    \xrightarrow[n\to \infty]{\mathcal L} \prod_{j=1}^q \pi_j^{\otimes k_j}\, . 
    $$
\end{theorem}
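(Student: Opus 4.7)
The plan is to combine the blockwise AMP result (Corollary \ref{cor:partialAMP}) with the blockwise Sznitman-type statement (Proposition \ref{prop:sznitman_generalized}) announced in the introduction. The argument decomposes into three steps: (i) show that $\bs{x}^\star$ is itself blockwise exchangeable; (ii) adapt the proof of Theorem \ref{th:main-LV} to obtain blockwise Wasserstein convergence of $\mu^{\bs{x}^{\star,(j)}}$ toward the deterministic distribution $\pi_j$; (iii) invoke the blockwise propagation of chaos to conclude.

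For step (i), fix $\sigma = (\sigma_1,\ldots,\sigma_q)\in \mathcal{S}_{n_1}\times\cdots\times \mathcal{S}_{n_q}$ and let $P_\sigma$ be the corresponding $n\times n$ permutation matrix. Since the Gaussian elliptic distribution is invariant under conjugation $A_n\mapsto P_\sigma A_n P_\sigma^\top$ and $\bs{r}$ is blockwise exchangeable by assumption, the joint law satisfies $(P_\sigma A_n P_\sigma^\top, P_\sigma \bs{r}) \laweq (A_n, \bs{r})$. Equivariance of the LV equation under a simultaneous permutation of the interaction matrix and the growth-rate vector yields that the equilibrium map satisfies $\bs{x}^\star(P_\sigma A_n P_\sigma^\top, P_\sigma \bs{r}) = P_\sigma \bs{x}^\star(A_n,\bs{r})$ on the event of existence (the extension \eqref{eq:extension-xstar} is compatible with the permutation since $\|P_\sigma A_n P_\sigma^\top\|=\|A_n\|$). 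Taking laws gives $P_\sigma \bs{x}^\star \laweq \bs{x}^\star$, i.e. $\bs{x}^\star$ is blockwise exchangeable.

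For step (ii), I would revisit the LV-specific AMP scheme used in Section \ref{subsec:AMP-for-LV} to prove Theorem \ref{th:main-LV}. Its initialization and parameter vectors are built from $\bs{r}$ and known deterministic quantities whose blockwise empirical measures are assumed to converge, so Assumption \ref{ass:A2prime} is satisfied. Corollary \ref{cor:partialAMP} then yields blockwise Wasserstein convergence of every AMP iterate. The passage from AMP iterates to the equilibrium $\bs{x}^\star$ relies on the LCP formulation and on the uniqueness part of Lemma \ref{lemma:main-system}; once performed block by block, this produces, for every $j\in[q]$,
\[
\mu^{\bs{x}^{\star,(j)}} \xrightarrow[n\to\infty]{\mathcal{P}_2(\mathbb{R})} \pi_j = \mathcal{L}\Bigl(\bigl(1 + \rho \gamma/\delta^2\bigr)(\sigma \bar{Z} + \bar{r}_j)_+\Bigr)\quad\text{(completely).}
\]

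For step (iii), $\bs{x}^\star$ is blockwise exchangeable with blockwise empirical measures converging in probability to the \emph{deterministic} limits $\pi_j$. This is precisely the hypothesis of the generalized Sznitman theorem (Proposition \ref{prop:sznitman_generalized}), whose conclusion is the propagation of chaos
\[
\bs{x}^\star_{[k_1,\ldots,k_q]} \xrightarrow[n\to\infty]{\mathcal{L}} \prod_{j=1}^q \pi_j^{\otimes k_j}.
\]
The individual marginal convergence $x^\star_{\psi_n}\xrightarrow{\mathcal{L}}\pi_j$ for $\psi_n\in C_n^{(j)}$ follows by specializing to $k_j=1$, $k_{j'}=0$ for $j'\neq j$. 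The main obstacle, in my view, lies in step (ii): the AMP-to-equilibrium argument developed in \cite{akjouj2023equilibria} uses global-in-$n$ Wasserstein controls that must be refined to blockwise controls on each $C_n^{(j)}$. This essentially amounts to rewriting the LCP-residual estimates with the appropriate normalization $1/|C_n^{(j)}|$ and checking that the contraction/fixed-point step remains valid when tested block by block against pseudo-Lipschitz functions.
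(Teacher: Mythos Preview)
Your proposal is correct and follows essentially the same three-step strategy as the paper: verify blockwise exchangeability of $\bs{x}^\star$ via permutation invariance of the elliptic model, obtain blockwise empirical convergence $\mu^{\bs{x}^{\star,(j)}}\to\pi_j$ by rerunning the AMP-to-LCP argument of Section~\ref{subsec:AMP-for-LV} with Corollary~\ref{cor:partialAMP} in place of Theorem~\ref{th:main}, and then invoke Proposition~\ref{prop:sznitman_generalized}. One small technicality: you cannot set $k_{j'}=0$ in Proposition~\ref{prop:sznitman_generalized} as stated, but the single-coordinate conclusion follows immediately by taking all $k_{j'}=1$ and projecting (or directly from blockwise exchangeability, which gives $x^\star_{\psi_n}\laweq x^\star_{\min C_n^{(j)}}$).
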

Proof of Theorem \ref{th:chaos-prop-II} is postponed to Section \ref{subsec:proof-chaos-prop-II}.
\begin{remark}[Global versus local distributions]
    Unlike the case where the intrinsic growth rates vector $\bs{r}$ is exchangeable (see Corollary~\ref{coro:chaos-prop-I}), notice now that each block of the equilibrium vector $\bs{x}^{\star}$ converges to a different law which locally depends on the structure of $\bs{r}$. In particular the $j$-th block of $\bs{x}^{\star}$ converges to
    $$  \pi_j=\mathcal{L}\left( \left( 1 + \rho \gamma/\delta^2 \right) \left( \sigma \bar Z + \bar r_j \right)_+ \right), $$
    which is different than the overall asymptotic behaviour of $\bs{x}^\star$,
    $$  \pi=\mathcal{L}\left( \left( 1 + \rho \gamma/\delta^2 \right) \left( \sigma \bar Z + \bar r \right)_+ \right). $$
    Simulations based on a three-block piece-wise constant vector $\bs{r}=(\bs{r}^{(1)},\bs{r}^{(2)},\bs{r}^{(3)})$ are provided in Figure~\ref{fig:exchangeability}.
\end{remark}

\begin{remark}[Distribution $\pi$ is a mixture of the $\pi_j$'s]
    Recall the definitions of the $q$ random variables $(\bar{r}_j)_{j\in [q]}$ in Theorem~\ref{th:chaos-prop-II} and the definition of $\bar{r}$ in Theorem~\ref{th:main-LV}, we can notice that the law of $\bar{r}$ is the mixture the laws of $(\bar{r}_j)_{j\in [q]}$ with coefficients $(c_j)_{j\in [q]}$, i.e.
    \begin{equation*}
        \mathcal{L}(\bar{r}) = \sum_{j=1}^{q} c_j\mathcal{L}(\bar{r}_j).
    \end{equation*}
    This also means that the limiting distribution of the whole equilibrium vector $\bs{x}^{\star}$ is a mixture of laws, i.e.
    \begin{equation*}
        \mu^{\bs{x}^\star} \ \xrightarrow[n\to\infty]{\mathcal{P}_2(\RR)}\sum_{j=1}^{q} c_j \pi_j
        \quad \text{where}\quad \pi_j=\mathcal{L}\left( \left( 1 + \rho \gamma/\delta^2 \right) \left( \sigma \bar Z + \bar r_j \right)_+ \right).
    \end{equation*}
\end{remark}

\begin{remark}[surviving species within block $j$]
    From an empirical point of view, simulations easily provide the number of surviving species within a block $j$ of size $n_j$, that is
    $$
    \frac{\#\{ x^*_\ell>0, \ell\in C^{(j)}_n\}}{n_j}\ ,
    $$
    and the value of their positive abundance. Following Remarks \ref{rem:surviving-species} and \ref{rem:distribution-surv-species}, Theorem \ref{th:chaos-prop-II} provides their analytical counterparts. The quantity
    $$
    \gamma_j =\mathbb{P} (\sigma\bar{Z} +\bar{r}_j>0) 
    $$
    is a good approximation for the proportion of surviving species within block $j$ and the density
$$
f^{j}_{\textrm{surv}}(y)\ =\ \frac{\delta}{\kappa} f_{\sigma \bar{Z} +\bar{r}_j}\left(\frac{\delta\,y}{\kappa}\right) \frac{\bs{1}_{(y>0)}}{\gamma_j}\quad \text{where}\quad 
f_{\sigma \bar{Z} +\bar{r}_j}(y) = \int_{\mathbb{R}} \frac{e^{-\frac{(y-r)^2}{2\sigma^2}}}{\sqrt{2\pi}\, \sigma} \mathbb{P}_{\bar{r}_j}(dr),
$$
for the distribution of the surviving species in block $j$. 

One can notice that $f_{\textrm{surv}}$ is a mixture of the $f^j_{\textrm{surv}}$'s:
$$
f_{\textrm{surv}}(y) = \sum_{j=1}^q  \frac{c_j \gamma_j}{\gamma} f^j_{\textrm{surv}}(y)\quad \text{with}\quad 
\sum_{j=1}^q \frac{c_j \gamma_j}{\gamma} =1\, .
$$
Based on a three-block piece-wise constant vector $\bs{r}=(\bs{r}^{(1)},\bs{r}^{(2)},\bs{r}^{(3)})$, the densities $f^1_{\textrm{surv}}$, $f^2_{\textrm{surv}}$ and $f^3_{\textrm{surv}}$ are compared to the corresponding simulation based histograms in Figure~\ref{fig:exchangeability}.
\end{remark}
\begin{figure}
\centering
\begin{subfigure}{0.45\textwidth}
    \includegraphics[scale=0.45]{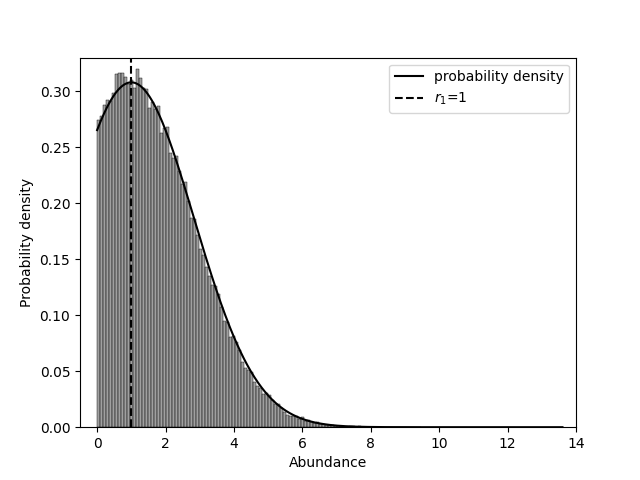}
    \caption{The density $f_{\text{surv}}^1$ compared to the histogram of surviving species in block 1.}
    \label{fig:r1}
\end{subfigure}
\hspace{0.45cm}
\begin{subfigure}{0.45\textwidth}
    \includegraphics[scale=0.45]{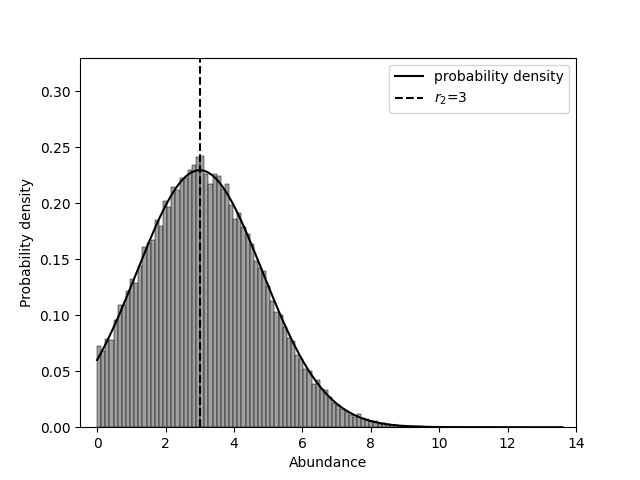}
    \caption{The density $f_{\text{surv}}^2$ compared to the histogram of surviving species in block 2.}
    \label{fig:r2}
\end{subfigure}

\begin{subfigure}{0.45\textwidth}
    \includegraphics[scale=0.45]{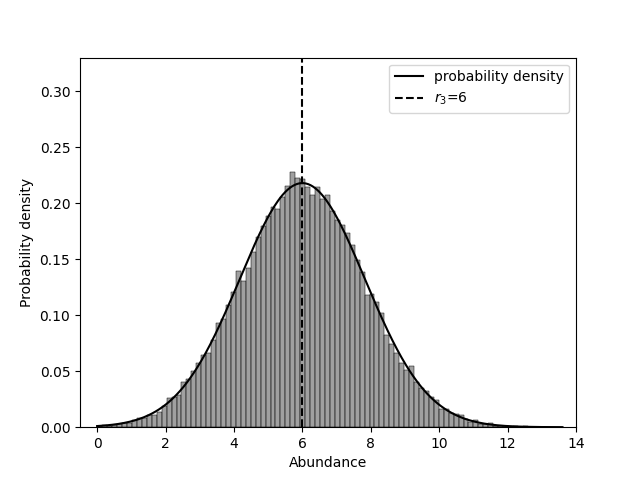}
    \caption{The density $f_{\text{surv}}^3$ compared to the histogram of surviving species in block 3.}
    \label{fig:r3}
\end{subfigure}
\hspace{0.45cm}
\begin{subfigure}{0.45\textwidth}
    \includegraphics[scale=0.45]{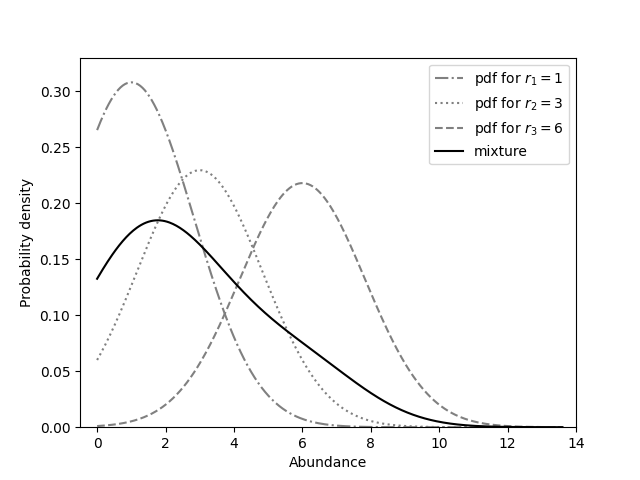}
    \caption{The Distribution of $\bs{x}^\star$ as a mixture of the three blocks' distributions.}
    \label{fig:r123}
\end{subfigure}
        
\caption{The vector $\bs{r}$ is decomposed into three consecutive blocks, for each block we fix a constant value $r_1,r_2,r_3 = 1,3,6$ respectively, we also choose different sizes of the blocks to be $n/2, 3n/10, n/5$. We then solve the Linear Complementarity Problem problem (see \eqref{eq:LCP-first-sight}) with the help of Lemke algorithm for $\rho=0 $ and $ \kappa=2$.}
\label{fig:exchangeability}
\end{figure}

\section{Proofs of Theorem \ref{th:main} and Corollary \ref{cor:partialAMP}}
\label{sec:proofs}
In Section \ref{subsec:elliptical-results} we provide various results related to elliptic random matrices. 
Sections \ref{subsec:some-preparation}--\ref{subsec:end-of-proof-main} are devoted to the proof of Theorem \ref{th:main}. After introducing new notations in Section \ref{subsec:some-preparation}, we provide an adaptation of Bolthausen conditioning argument to elliptic random matrices in Section \ref{subsec:bolthausen}, see Propositions \ref{prop:bolt} and \ref{prop:bolt++}. This represents the crux of the proof of Theorem \ref{th:main} and our main contribution to this section. Section \ref{subsec:end-of-proof-main} is devoted to the end of proof of Theorem  
\ref{th:main} and closely follows \cite{feng2021unifying}. Proof of Corollary \ref{cor:partialAMP} is established in Section \ref{subsec:partialAMPProof}.

\subsection{Preliminary results on elliptic matrices}
\label{subsec:elliptical-results}
Let $X$ be a $n\times n$ matrix with independent ${\mathcal N}(0,1)$ entries. 
If the $n\times n$ matrices $G$ and $\widetilde G$ satisfy
$$
G \ \eqlaw\ \frac{X + X^\top}{\sqrt{2}}\qquad \text{and}\qquad \widetilde G \ \eqlaw\ \frac{X - X^\top}{\sqrt{2}}\, ,
$$
then we say that $G$ is a GOE matrix and $\widetilde G$ an antisymmetric GOE
matrix. 

From the definition \ref{def:elliptic-model} of an elliptic matrix, it is 
easy to check that a matrix $M \sim \Elliptic(n,\rho)$ for $\rho \in [-1,1]$ 
can be characterized as 
\begin{equation}
\label{ellip-goe} 
M \ \eqlaw\ \sqrt{\frac{1+\rho}{2}} G + \sqrt{\frac{1-\rho}{2}} \widetilde G, 
\end{equation} 
where $G$ is a GOE matrix, $\widetilde G$ is an antisymmetric GOE
matrix, and $G \indep \widetilde G$.
% From this characterization, we readily obtain that the distribution of $M$ 
% is orthogonally invariant. 

We begin by two elementary results on GOE matrices:
\begin{lemma}
\label{lemma:elliptic-misc} 
Let $G$ and $\widetilde G$ be respectively a symmetric and an antisymmetric 
$n\times n$ GOE matrix. Consider two deterministic vectors $\bs{u}, \bs{v} \in \RR^n$, then:
\begin{eqnarray*}
&(i)& \EE\, G \bs{u}\bs{v}^\top G = (\bs{v}^\top \bs{u}) I_n + \bs{v}\bs{u}^\top\quad \textrm{and}\quad \EE\, \widetilde G \bs{u}\bs{v}^\top \widetilde G = - (\bs{v}^\top \bs{u}) I_n + \bs{v}\bs{u}^\top\, ,\\
&(ii)& G \bs{u} \sim \cN\left( 0, I_n + \bs{u} \bs{u}^\top\right)\quad \textrm{and}\quad \widetilde G\bs{u} \sim \cN\left( 0, I_n - \bs{u}\bs{u}^\top\right)\,.
\end{eqnarray*}
\end{lemma}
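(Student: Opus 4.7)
The plan is to reduce both parts of the lemma to direct second-moment computations using the representations $G \eqlaw (X+X^\top)/\sqrt{2}$ and $\widetilde G \eqlaw (X-X^\top)/\sqrt{2}$, where $X$ has i.i.d.\ $\mathcal{N}(0,1)$ entries. The key preliminary step is to record the two basic covariance identities
$$
\EE[G_{pq}G_{rs}] = \delta_{pr}\delta_{qs} + \delta_{ps}\delta_{qr}\,, \qquad \EE[\widetilde G_{pq}\widetilde G_{rs}] = \delta_{pr}\delta_{qs} - \delta_{ps}\delta_{qr}\,,
$$
which follow at once from $\EE[X_{pq}X_{rs}] = \delta_{pr}\delta_{qs}$ by expanding $(X_{pq}\pm X_{qp})(X_{rs}\pm X_{sr})/2$.

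For part (i), I would write out the $(a,b)$-entry of $G\bs{u}\bs{v}^\top G$ explicitly:
$$
\EE\bigl[(G\bs{u}\bs{v}^\top G)_{ab}\bigr] = \sum_{i,j} u_i v_j \,\EE[G_{ai}G_{jb}]\,.
$$
Substituting the first identity above, the $\delta_{aj}\delta_{ib}$ term contributes $v_a u_b = (\bs{v}\bs{u}^\top)_{ab}$ and the $\delta_{ab}\delta_{ij}$ term contributes $\delta_{ab}\,\bs{v}^\top\bs{u}$, yielding the announced formula. The antisymmetric case is line-by-line identical, the only change being that the second identity flips the sign of the $\delta_{ab}\delta_{ij}$ contribution.

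For part (ii), the vector $G\bs{u}$ is centered Gaussian because each of its coordinates is a linear combination of the jointly Gaussian entries of $G$. To identify its covariance matrix it suffices to compute
$$
\EE\bigl[G\bs{u}(G\bs{u})^\top\bigr] = \EE\bigl[G\bs{u}\bs{u}^\top G^\top\bigr] = \EE\bigl[G\bs{u}\bs{u}^\top G\bigr]\,,
$$
using the symmetry $G^\top = G$, and then to apply part (i) with $\bs{v}=\bs{u}$. For the antisymmetric case the identity $\widetilde G^\top = -\widetilde G$ produces an overall sign in front of $\EE[\widetilde G\bs{u}\bs{u}^\top \widetilde G]$, which when combined with the sign flip already present in (i) leads to the stated covariance.

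There is no serious obstacle; the argument is entirely computational. The only real subtlety is the sign bookkeeping in the antisymmetric case, where one must verify that the sign flip originating from $\widetilde G^\top = -\widetilde G$ in (ii) combines with the sign flip in the second-moment formula of (i) so that, in the end, only the identity term of the covariance changes sign relative to the symmetric case while the rank-one term $\bs{u}\bs{u}^\top$ is preserved.
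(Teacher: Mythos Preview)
Your proof is correct. For part (i) you and the paper do essentially the same thing: a direct entry-wise second-moment computation; your packaging via the unified formula $\EE[G_{pq}G_{rs}] = \delta_{pr}\delta_{qs}+\delta_{ps}\delta_{qr}$ (and its antisymmetric counterpart) is a clean way to treat diagonal and off-diagonal entries in one stroke, but the content is identical.

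For part (ii) your route is genuinely different from the paper's. You obtain the covariance of $G\bs{u}$ by writing $\EE[G\bs{u}\bs{u}^\top G^\top]=\EE[G\bs{u}\bs{u}^\top G]$ and then invoking part (i) with $\bs{v}=\bs{u}$; this is purely computational and makes (ii) a corollary of (i). The paper instead exploits the orthogonal invariance of the GOE: completing $\bs{u}$ (implicitly taken unit-norm) to an orthogonal matrix $U$, it writes $G\bs{u}\eqlaw UGU^\top\bs{u}=UGe_1$ and reads off the law from the covariance of the first column of a GOE matrix. Your argument is more elementary and self-contained, and has the minor advantage of working verbatim for arbitrary $\bs{u}$ (yielding $\|\bs{u}\|^2 I_n+\bs{u}\bs{u}^\top$, which specializes to the stated formula when $\|\bs{u}\|=1$). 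The paper's argument, on the other hand, highlights a structural property (rotational invariance) that is often useful in this kind of analysis, at the cost of requiring the unit-norm normalization to complete $\bs{u}$ into an orthogonal matrix.
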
 

\begin{proof}
We prove the two statements for $G$, the corresponding results for $\widetilde G$ can be shown similarly. 
For $(i)$, we start by writing 
% \[
$\EE [G \bs{u}\bs{v}^\top G]_{kk} = \sum_{ij} \EE G_{ki} G_{kj} u_i v_j = 
\sum_i \EE G_{ki}^2 u_i v_i = u_k v_k + \bs{v}^\top \bs{u}
$, 
% \]
and $\EE [G \bs{u}\bs{v}^\top G]_{kl} = \sum_{ij} \EE G_{ki} G_{jl} u_i v_j = u_l v_k$  
for $k\neq l$. Thus, $\EE G \bs{u}\bs{v}^\top G = (v^\top u) I_n + \bs{v}\bs{u}^\top$. 
We now prove $(ii)$. As in the proof of \cite[Lemma 6.14]{feng2021unifying}, let us complete the
vector $\bs{u}$ in a deterministic orthogonal matrix 
$U = \begin{bmatrix} \bs{u} \ \widetilde U \end{bmatrix}$. By the orthogonal 
invariance of GOE matrices, we have  
\[
G \bs{u} \eqlaw U G U^\top \bs{u} = U G e_1 \sim 
\cN\left( 0, U (e_1 e_1^\top + I_n) U^\top \right) 
  = \cN\left(0, I_n + \bs{u}\bs{u}^\top\right)\,,
\]
hence the desired result.
\end{proof}

%%%%%%%%%%%%%%%%%%%%

Using these results, we now have the two following propositions of elliptic matrices. 
\begin{proposition}
\label{prop:mainIndep}
Let $M\sim \Elliptic(n,\rho)$. Let $\bs{q}\in \RR^n$ and $U\in \RR^{n\times k}$ 
be deterministic with $\bs{q}^\top U = 0$. Let $P\in \RR^{n\times n}$ be a
deterministic matrix that satisfies $PU = 0$. Then
\[
(M-\rho PM^\top)\bs{q} \quad \indep\quad  
\begin{bmatrix}
(M - \rho M^\top)U \\
U^\top M^\top U
\end{bmatrix}.
\]
\end{proposition}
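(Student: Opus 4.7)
The strategy is to exploit joint Gaussianity: since every term in sight is a linear function of the entries of $M$, the triple $\bigl((M-\rho P M^\top)\bs{q},\, (M-\rho M^\top)U,\, U^\top M^\top U\bigr)$ is jointly centered Gaussian, so the desired independence is equivalent to the vanishing of the cross-covariance of the first block with each of the two remaining blocks. As a preparatory step I would decompose $M \eqlaw aG + b\widetilde G$ with $a=\sqrt{(1+\rho)/2}$ and $b=\sqrt{(1-\rho)/2}$, where $G\indep\widetilde G$ are respectively a GOE and an antisymmetric GOE matrix, as in \eqref{ellip-goe}. Using $G^\top = G$ and $\widetilde G^\top = -\widetilde G$, I would rewrite
\begin{align*}
(M-\rho P M^\top)\bs{q} &= a(I-\rho P)\,G\bs{q} + b(I+\rho P)\,\widetilde G\bs{q},\\
(M-\rho M^\top)U &= a(1-\rho)\,GU + b(1+\rho)\,\widetilde G U,\\
U^\top M^\top U &= a\,U^\top G U - b\,U^\top \widetilde G U.
\end{align*}
Independence of $G$ and $\widetilde G$ then splits every cross-covariance into a GOE and an anti-GOE contribution, each accessible via Lemma~\ref{lemma:elliptic-misc}(i).

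For the first cross-covariance I would work column by column: for any column $\bs{u}$ of $U$, Lemma~\ref{lemma:elliptic-misc}(i) gives $\EE\,G\bs{q}(G\bs{u})^\top = (\bs{u}^\top\bs{q})I_n + \bs{u}\bs{q}^\top$, and an analogous computation carrying the sign $\widetilde G^\top = -\widetilde G$ yields $\EE\,\widetilde G\bs{q}(\widetilde G \bs{u})^\top = (\bs{u}^\top\bs{q})I_n - \bs{u}\bs{q}^\top$. The orthogonality $\bs{q}^\top U=0$ kills both identity parts, and $PU=0$ trivializes the prefactors $I\pm\rho P$ when applied to $\bs{u}$. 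The cross-covariance therefore collapses to $\bigl[a^2(1-\rho) - b^2(1+\rho)\bigr]\bs{u}\bs{q}^\top$, which vanishes since $a^2(1-\rho) = b^2(1+\rho) = (1-\rho^2)/2$. For the second cross-covariance, whose generic scalar entry has the form $\bs{u}_i^\top M^\top \bs{u}_j$ with $\bs{u}_i,\bs{u}_j$ columns of $U$, I would expand via the entrywise identities $\EE[G_{\ell m}G_{kn}] = \delta_{\ell k}\delta_{mn}+\delta_{\ell n}\delta_{mk}$ and its antisymmetric analogue $\EE[\widetilde G_{\ell m}\widetilde G_{kn}] = \delta_{\ell k}\delta_{mn}-\delta_{\ell n}\delta_{mk}$; the resulting contractions against $\bs{u}_i$, $\bs{u}_j$, $\bs{q}$ reduce to linear combinations of $\bs{u}_i(\bs{u}_j^\top\bs{q})$ and $\bs{u}_j(\bs{u}_i^\top\bs{q})$, both zero by $\bs{q}^\top U=0$.

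The main obstacle I anticipate is the sign bookkeeping around the anti-GOE block when unfolding $(M-\rho PM^\top)$ and $M^\top$, together with the algebraic observation that $a^2(1-\rho)$ and $b^2(1+\rho)$ coincide, so that the two otherwise nonzero GOE and anti-GOE contributions cancel exactly. Once this is in place, the two orthogonality hypotheses $\bs{q}^\top U=0$ and $PU=0$ do the rest of the work, and joint Gaussianity upgrades zero cross-covariance to genuine independence.
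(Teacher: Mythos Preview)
Your proposal is correct and follows essentially the same route as the paper: reduce independence to vanishing cross-covariances via joint Gaussianity, decompose $M$ into GOE and anti-GOE parts, and evaluate the two cross-covariances column by column using Lemma~\ref{lemma:elliptic-misc}(i) together with $\bs q^\top U=0$ and $PU=0$. The only cosmetic difference is the order of simplifications in the first block: you apply $P\bs u=0$ first to reduce $(I\pm\rho P)\bs u$ to $\bs u$ and then invoke the numerical identity $a^2(1-\rho)=b^2(1+\rho)$, whereas the paper keeps the common coefficient $\tfrac{1-\rho^2}{2}$, combines $(I-\rho P)-(I+\rho P)=-2\rho P$, and applies $P\bs u=0$ at the end; for the second block you compute entrywise while the paper observes the structural form $\text{scalar}\times\bs u\bs q^\top$ and right-multiplies by $U$.
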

\begin{proof}
Since the considered quantities form a Gaussian vector, it is enough to show that
$(M-\rho PM^\top)\bs{q}$ is decorrelated from all columns of the two
matrices $(M - \rho M^\top)U$ and $U^\top M^\top U$.  
To this end, we use the characterization~\eqref{ellip-goe} and write 
\[
\begin{bmatrix} 
  M \\
M-\rho M^\top 
\end{bmatrix} 
\quad  \eqlaw\quad  
\begin{bmatrix} 
  \sqrt{\frac{1+\rho}{2}} G + \sqrt{\frac{1-\rho}{2}} \widetilde G, \\
(1-\rho) \sqrt{\frac{1+\rho}{2}} G + (1+\rho) \sqrt{\frac{1-\rho}{2}} 
  \widetilde G 
\end{bmatrix} . 
\]
Let $\bs{u}$ be any column of  $U$. We first show that 
$(M-\rho PM^\top)\bs{q}$ and $(M - \rho M^\top) \bs{u}$ are decorrelated.
To compute $\EE  (M-\rho PM^\top)\bs{q} 
  \bs{u}^\top (M - \rho M^\top)^\top$, we use Lemma~\ref{lemma:elliptic-misc}-(i), noticing that the 
cross terms involving $G$ and $\widetilde G$ in the previous characterization 
are zero, and that $\bs{q}^\top \bs{u} = 0$. This leads to 
\begin{eqnarray*}
         \EE  (M-\rho PM^\top)\bs{q} \bs{u}^\top (M - \rho M^\top)^\top &=& \left(\frac{1-\rho^2}{2} (I-\rho P) - \frac{1-\rho^2}{2} (I+\rho P)\right)\bs{u}\bs{q}^\top
         \\  &=&\  -\rho (1-\rho^2)P\bs{u}\bs{q}^\top \ = \ 0 \,,
\end{eqnarray*}
by noticing that $P\bs{u}\bs{q}^\top = 0$.

To obtain that $(M-\rho PM^\top)\bs{q}$ and $U^\top M^\top \bs{u}$ are decorrelated for each column $\bs{u}$ of $U$, we easily notice
that $\EE \left( M-\rho PM^\top\right) \bs{q} \bs{u}^\top M = 
 \text{scalar} \times \bs{u}\bs{q}^\top$ from the structures of $M-\rho M^\top$ and 
$M$ provided above and from Lemma~\ref{lemma:elliptic-misc}-(i). Thus, 
$\EE \left(  M-\rho PM^\top\right) \bs{q} \bs{u}^\top M U = 0$ since
$\bs{q}^\top U = 0$.  
\end{proof}

\begin{proposition}
\label{innov} 
Let $M\sim \Elliptic(n,\rho)$. Let $\bs{q}\in \RR^n$ be a unit-norm deterministic
vector, and let $P$ be a deterministic orthogonal projection matrix on a 
subspace of $\RR^n$ such that $P\bs{q}=0$. Then, 
\[
(M - \rho P M^\top) \bs{q} \sim \cN\left( 0, 
  I - \rho^2 P + \rho \bs{q}\bs{q}^\top  \right) . 
\]
\end{proposition}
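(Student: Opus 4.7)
The plan is to use the GOE/antisymmetric-GOE decomposition \eqref{ellip-goe}, which reduces the problem to computing the covariance of a sum of two independent Gaussian vectors. Since $G$ is symmetric and $\widetilde{G}$ is antisymmetric, one has
\[
M^\top \ \eqlaw\ \sqrt{\tfrac{1+\rho}{2}}\, G - \sqrt{\tfrac{1-\rho}{2}}\, \widetilde G,
\]
so that
\[
M - \rho P M^\top \ \eqlaw\ \sqrt{\tfrac{1+\rho}{2}}\, (I - \rho P)\, G + \sqrt{\tfrac{1-\rho}{2}}\, (I + \rho P)\, \widetilde G.
\]
Applied to $\bs{q}$, this writes $(M - \rho P M^\top)\bs{q}$ as a sum of two \emph{independent} centered Gaussian vectors (since $G \indep \widetilde G$), so it is Gaussian and it remains only to identify its covariance.

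Using Lemma \ref{lemma:elliptic-misc}-(ii), $G\bs{q}\sim\cN(0, I + \bs{q}\bs{q}^\top)$ and $\widetilde G \bs{q}\sim\cN(0, I - \bs{q}\bs{q}^\top)$, hence the covariance of $(M-\rho PM^\top)\bs{q}$ equals
\[
\tfrac{1+\rho}{2}(I-\rho P)(I+\bs{q}\bs{q}^\top)(I-\rho P) + \tfrac{1-\rho}{2}(I+\rho P)(I-\bs{q}\bs{q}^\top)(I+\rho P),
\]
where I have used that $P$ is symmetric (being an orthogonal projection) to collapse $(I-\rho P)^\top = I-\rho P$.

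The last step is a short algebraic simplification exploiting the three structural assumptions: $P\bs{q}=0$ (so $\bs{q}^\top P = 0$ too), $P^2=P$, and $\|\bs{q}\|=1$. These yield $(I\pm\rho P)(I\pm\rho P) = I \pm 2\rho P + \rho^2 P$ and $(I-\rho P)\bs{q}\bs{q}^\top(I-\rho P) = \bs{q}\bs{q}^\top$ (and similarly for the other sign). Substituting, the cross $\pm 2\rho P$ terms cancel in the convex combination, the $\rho^2 P$ contributions leave $-\rho^2 P$ after combining with $I$, and the $\bs{q}\bs{q}^\top$ contributions combine to $\rho\, \bs{q}\bs{q}^\top$. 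This gives exactly $I - \rho^2 P + \rho\, \bs{q}\bs{q}^\top$.

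There is no real obstacle here: the only thing to be careful about is keeping track of the signs coming from the antisymmetry of $\widetilde G$ when transposing, and making sure the hypotheses $P\bs{q}=0$, $P^2=P$, $\|\bs{q}\|=1$ are invoked at the right places so the simplification is clean. All of this parallels the reasoning used in Proposition \ref{prop:mainIndep}, and in fact Proposition \ref{innov} can be viewed as a companion statement giving the precise marginal law whose independence was established there.
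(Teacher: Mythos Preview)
Your proof is correct and follows essentially the same approach as the paper's: both use the GOE/antisymmetric-GOE decomposition~\eqref{ellip-goe}, apply Lemma~\ref{lemma:elliptic-misc}-(ii) to identify the two independent Gaussian summands, and then simplify the resulting covariance using $P\bs q=0$, $P^2=P$ and $\|\bs q\|=1$. One small remark: your verbal summary of the algebra is slightly off (the $\pm 2\rho P$ terms do not cancel in the weighted combination---they contribute $-2\rho^2 P$, which together with the $+\rho^2 P$ coming from the $P^2$ terms yields the $-\rho^2 P$), but the computation itself is correct and matches the paper's.
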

\begin{proof}
Using the same principle as in the previous proof, we write 
\[
M - \rho P M^\top \eqlaw 
 \sqrt{\frac{1+\rho}{2}} (I-\rho P)  G + \sqrt{\frac{1-\rho}{2}} (I+\rho P)  
  \widetilde G . 
\]
By Lemma~\ref{lemma:elliptic-misc}-(ii), we then have that 
\begin{equation*}
        (I-\rho P)  G \bs{q}\sim \mathcal{N}\left(0, (I-\rho P)^2 + \bs{q}\bs{q}^\top\right)\quad \text{and}\quad 
        (I+\rho P) \widetilde G \bs{q}\sim \mathcal{N}\left(0,(I+\rho P)^2 - \bs{q}\bs{q}^\top\right).
\end{equation*}
Finally, we get $(M - \rho P M^\top) \bs{q} \sim \cN(0, \Sigma)$ with 
\begin{equation*}
        \Sigma \quad=\quad  \frac{1+\rho}{2} \left((I-\rho P)^2 + \bs{q}\bs{q}^\top\right) + \frac{1-\rho}{2} \left((I+\rho P)^2 - \bs{q}\bs{q}^\top\right)\quad =\quad  I - \rho^2 P + \rho \bs{q}\bs{q}^\top \,,
\end{equation*}
which yields the desired result. 
\end{proof} 

% Nous allons faire la preuve par récurrence, on va faire ...

\subsection{Proof of Theorem~\ref{th:main}: notations and some preparation} 
\label{subsec:some-preparation}
We introduce hereafter notations used throughout the proof.

\subsubsection*{Conditioning}
The conditional equality in distribution of two random variables $X$ and $Y$
given a $\sigma$-field $\mathcal{F}$ will be denoted as $X \cond{\cF} Y$. Formally
$$
X \cond{\cF} Y\qquad \text{iff} \qquad 
\EE[\varphi(X) \, | \, \cF] \ =\ 
\EE[\varphi(Y) \, | \, \cF]\quad  (a.s.) 
$$
for every non-negative measurable function $\varphi$.

The conditionnal independence of $X$ and $Y$ givent $\cF$ will be denoted by  $X \indep\hspace{-0.1cm}|_{\mathcal{ F}}\, Y$. Formally,
$$
X \indep\hspace{-0.1cm}|_{\mathcal{ F}}\, Y\qquad \text{iff} \qquad 
\mathbb{E}\left[\left.\varphi(X)\psi(Y)\right| \mathcal{F}\right] \ =\
\mathbb{E}\left[\left.\varphi(X)\right|
\mathcal{F}\right]\mathbb{E}\left[\left.\psi(Y)\right| \mathcal{F}\right]\ (a.s.)
$$
for all non-negative measurable functions $\varphi$
and $\psi$.

The following lemma will be of use later.
\begin{lemma}
\label{lem:ExtendedLawEq}
    Let $\mathcal{F}$, $\mathcal{G}$ be two $\sigma$-fields and $Y, X,\overline{X}$ be random variables. Suppose that (i) $Y$ is $\cF$-measurable, (ii) $X\laweq \overline{X}$ and (iii) $\overline{X} \indep \cF$. Suppose moreover that for some measurable function $\varphi$: 
    \begin{equation*}
           (iv)\ \varphi\left(X,Y\right) \cond{\mathcal{F}} \varphi\left(\overline{X},Y\right)\quad \textrm{and}\quad 
        (v)\ \varphi(X,Y) \indep\hspace{-0.1cm}|_{\mathcal F}\, \mathcal{G} \,.
    \end{equation*}
    Let $\mathcal{H} = \sigma\left(\mathcal{F}\cup \mathcal{G}\right)$, then there exists a random variable $\widetilde{X}$ such that $\widetilde{X}\laweq X$, $\widetilde{X}\indep \mathcal{H}$ and
    \begin{equation*}
        \varphi(X,Y) \cond{\mathcal{H}} \varphi(\widetilde{X},Y).
    \end{equation*}
\end{lemma}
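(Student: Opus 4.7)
The plan is to construct $\widetilde{X}$ by enlarging the underlying probability space with an independent copy of $X$, and then to verify the conditional equality in distribution by chaining assumptions (v), (iv), (iii), and (i). Specifically, on a suitably enlarged probability space I take $\widetilde{X} \laweq X$ independent of everything previously defined, so that $\widetilde{X} \indep \mathcal{H}$; the joint law of $(X,\overline{X},Y)$ together with the $\sigma$-fields $\mathcal{F},\mathcal{G}$ is preserved by this enlargement, so assumptions (i)--(v) continue to hold.

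To establish $\varphi(X,Y) \cond{\mathcal{H}} \varphi(\widetilde{X},Y)$, fix a bounded measurable test function $\psi$ and aim to show that both $\EE[\psi(\varphi(X,Y))\mid \mathcal{H}]$ and $\EE[\psi(\varphi(\widetilde{X},Y))\mid \mathcal{H}]$ coincide with $g(Y)$, where $g(y) := \EE[\psi(\varphi(\overline{X},y))]$. For the left-hand side, assumption (v) is equivalent to the statement that $\mathcal{G}$ can be discarded from the conditioning, yielding
\[
\EE[\psi(\varphi(X,Y))\mid \mathcal{H}] \;=\; \EE[\psi(\varphi(X,Y))\mid \mathcal{F}].
\]
Assumption (iv) then swaps $X$ for $\overline{X}$ in this last expression, and since $\overline{X} \indep \mathcal{F}$ by (iii) with $Y$ being $\mathcal{F}$-measurable by (i), a Fubini-type freezing argument produces $g(Y)$. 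For the right-hand side, $\widetilde{X} \indep \mathcal{H}$ together with the $\mathcal{H}$-measurability of $Y$ gives, by the same freezing principle, $\EE[\psi(\varphi(\widetilde{X},y))]$ evaluated at $y=Y$; this equals $g(Y)$ since $\widetilde{X} \laweq X \laweq \overline{X}$ by (ii).

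The only delicate ingredient is the passage from $\mathcal{H}$-conditioning to $\mathcal{F}$-conditioning enabled by (v): this is the standard fact that if $Z \indep\hspace{-0.1cm}|_{\mathcal{F}} \mathcal{G}$ then $\EE[\psi(Z)\mid \sigma(\mathcal{F}\cup \mathcal{G})] = \EE[\psi(Z)\mid \mathcal{F}]$, which follows directly from the definition of conditional independence by a monotone class argument. Once granted, the remainder of the argument is bookkeeping with conditional expectations and equality in law. The construction of $\widetilde{X}$ via an independent enlargement is routine and harmless in the AMP setting, where fresh independent Gaussian innovations are regularly adjoined to carry the induction forward.
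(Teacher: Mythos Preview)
Your proof is correct and follows essentially the same route as the paper: use (v) to reduce the $\mathcal{H}$-conditioning to an $\mathcal{F}$-conditioning (the paper phrases this as $\varphi(X,Y)\indep\hspace{-0.1cm}|_{\mathcal F}\,\mathcal{G}$ implying $\varphi(X,Y)\indep\hspace{-0.1cm}|_{\mathcal F}\,\mathcal{H}$, citing Kallenberg), then apply (iv) to swap $X$ for $\overline{X}$, and finally use (iii)+(i) to freeze $Y$ and obtain $g(Y)$. The only cosmetic difference is that the paper tests against arbitrary $\mathcal{H}$-measurable $Z$ and verifies $\EE[Z\,\psi(\varphi(X,Y))]=\EE[Z\,\psi(\varphi(\widetilde{X},Y))]$, whereas you compute both conditional expectations directly and identify them with $g(Y)$; these are equivalent formulations.
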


\begin{proof}
    We skip all the integrability issues.
    Let $Z$ be an $\mathcal{H}$-measurable random variable, and $\psi$ any measurable function. Let $\Phi = \psi\circ \varphi$, we have 
    \begin{eqnarray*}
        \mathbb{E}\left[Z\Phi\left(X,Y\right)\right] \ =\   \mathbb{E}\left[\mathbb{E}\left[Z \Phi\left(X,Y\right)\mid \mathcal{F}\right] \right]
        &\stackrel{(a)}=&\mathbb{E}\left[\mathbb{E}\left[Z \mid  \mathcal{F}\right] \mathbb{E}\left[ \Phi\left(X,Y\right) \mid \mathcal{F}\right]\right]\,,\\
        &\stackrel{(b)}=& \mathbb{E}\left[\mathbb{E}\left[Z \mid \mathcal{F}\right] \mathbb{E}\left[\Phi\left(\overline{X},Y\right) \mid \mathcal{F}\right]\right]\,,\\
        &\stackrel{(c)}=& \mathbb{E}\left[\mathbb{E}\left[Z \mid \mathcal{F}\right] \mathbb{E}_{\overline{X}}\left[\Phi\left(\overline{X},Y\right)\right]\right]\,,
    \end{eqnarray*}
    where $(a)$ follows from the fact that $ \varphi(X,Y) \indep\hspace{-0.1cm}|_{\mathcal F}\, \mathcal{G} $ implies $ \varphi(X,Y) \indep\hspace{-0.1cm}|_{\mathcal F}\, \mathcal{H}$, hence $\varphi(X,Y) \indep\hspace{-0.1cm}|_{\mathcal F} Z$ (see for instance \cite[Corollary 6.7]{kallenberg2002foundations}), $(b)$ follows from assumption (ii) and $(c)$ from (iii).

    Consider now a r.v. $\widetilde{X}$ such that $\widetilde{X}\laweq X$ and $\widetilde{X}\indep \mathcal{H}$ then
    $
    \mathbb{E}_{\overline{X}}\,  \Phi(\overline{X}, Y) = \mathbb{E}_{\widetilde{X}}  \Phi(\widetilde{X}, Y)
    $
    and
    $$
          \mathbb{E}\left[Z\Phi\left(X,Y\right)\right] \ =\ \mathbb{E}\left[\mathbb{E}\left(Z \mid \mathcal{F}\right) \mathbb{E}_{\widetilde{X}}\Phi\left(\widetilde{X},Y\right)\right]
            \ =\ \mathbb{E}\left[Z\,\mathbb{E}_{\Tilde{X}}\Phi\left(\Tilde{X},Y\right)\right] 
            \ \stackrel{(d)}=\  \mathbb{E}\left[Z\Phi\left(\widetilde{X},Y\right)\right]\,,
            $$
    where $(d)$ follows from Fubini's theorem. This completes the proof.
 \end{proof}

\subsubsection*{Notational shortcuts}
The following notations, related to the AMP iterations, will be of constant use in the sequel.
\begin{equation*}
\begin{array}{lll}
\bs{q}^k \triangleq h_k\left(\bs{u}_n^k, B_n\right) \phantom{\bigg|}\in \RR^n 
  &\text{for} \ k \geq 0 &\text{and } \bs{q}^{-1}=\bs{0}_n\,,\\
Q_k \triangleq \begin{bmatrix} \bs{q}^0,\cdots,\bs{q}^{k-1} \end{bmatrix} \phantom{\bigg|}
    \in \RR^{n\times k} &\text{for } k \geq 1\,,& \\
U_k \triangleq \begin{bmatrix} \bs{u}^1,\cdots,\bs{u}^k \end{bmatrix} \phantom{\bigg|}
    \in \RR^{n\times k} &\text{for } k \geq 1\,,&\\
d_k \triangleq 
 \left\langle \partial_1 h_k\left(\bs{u}_n^k,B_n\right)\right\rangle_n \in \mathbb{R} \phantom{\bigg|}
 &\text{for} \ k \geq 1, & \text{and}\ d_0 = 0\,.  
\end{array}  
\end{equation*}

Using these notations, Eq.~\eqref{eq:AMP_elliptic} can be written in the following compact form 
\begin{equation}
\label{eq:AMP_compact}
    \bs{u}^{k+1} = A \bs{q}^k - \rho d_k \bs{q}^{k-1}, \quad k \geq 0.
\end{equation}
\subsubsection*{Projections} Denote by $\Pi_{\colspan Q}$ the orthogonal projection matrix on the column
span of the matrix $Q$, and as $\Pi_{{\colspan Q}^\perp}$ the orthogonal 
projection matrix on the orthogonal of the latter subspace. It is well-known that 
$$
\Pi_{\colspan Q_k} = Q_k \left(Q_k^\top Q_k\right)^\dag Q_k^\top\ ,
$$
where $A^\dag$ represents a pseudo-inverse of matrix $A$.

For $k\ge 1$, we introduce the notations
$$
\left\{
\begin{array}{lll}
 P_k &\triangleq \Pi_{\colspan Q_k} &= Q_k \left(Q_k^\top Q_k\right)^\dag 
              Q_k^\top\\
        P_k^\perp &\triangleq \Pi_{{\colspan Q_k}^\perp} &= I_n - P_k           
\end{array}
\right.\qquad \text{and}\qquad \bs{\alpha}^k \ \triangleq\  \left(Q_k^\top Q_k\right)^\dag Q_k^\top \bs{q}^k 
 = \begin{bmatrix} \alpha^k_1 \\ \vdots \\ \alpha^k_k \end{bmatrix} 
 \in \RR^k. 
$$
By convention, $P_0 = 0_{n\times n}$ and $P_0^\perp = I_n$.

\subsubsection*{Filtration} We define the filtration $(\mathcal{F}_k)_{k\geq 1}$ by 
$$\mathcal{F}_k \triangleq
  \sigma\left(B, \bs{u}^0, \bs{u}^1, \cdots, \bs{u}^k\right)\,
  $$

\subsubsection*{Preparing the induction} Recall the definitions of $\bs{q}^k$, $Q_k$, $U_k$, $P_k$ and $\bs{\alpha}^k$ introduced above. The first step of the proof is to establish the following structural result.
\begin{proposition} 
\label{prop:struct}
Let $k,\ell \geq 1$ and define the vectors
$$
\bs{v}^{k,\ell} \triangleq U_k^\top \bs{q}^\ell -  d_\ell Q_k^\top \bs{q}^{\ell-1}
\quad \text{and}\quad \cI_k(A) \triangleq (A - \rho P_k A^\top) P_k^\perp \bq^k\, .
$$
Then $\bs{u}^1 = A\bs{q}^0 = \mathcal{I}_0(A)$ and
\begin{equation*}
        \bu^{k+1} \quad =\quad  \sum_{\ell=1}^{k} \alpha_\ell^k \bu^\ell + 
 \rho Q_k  \left(Q_k^\top Q_k\right)^{\dag} \left(\bs{v}^{k,k} 
- \sum_{\ell=1}^k \alpha_\ell^k \bs{v}^{k,\ell-1}\right) + \cI_k(A)\,, \quad (k\geq 1)\, .
\end{equation*}
\end{proposition}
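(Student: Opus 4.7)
The plan is to start from the compact AMP recursion $\bs{u}^{k+1}=A\bs{q}^k-\rho d_k\bs{q}^{k-1}$ and progressively reshape the right-hand side to exhibit the three announced pieces: a linear combination of past iterates $(\bs{u}^\ell)$, a correction living in $\colspan Q_k$, and the ``innovation'' $\cI_k(A)$. First I would decompose $\bs{q}^k=P_k\bs{q}^k+P_k^\perp\bs{q}^k=Q_k\bs{\alpha}^k+P_k^\perp\bs{q}^k$ from the very definition of $\bs{\alpha}^k$, so that
\[
A\bs{q}^k \;=\; A Q_k\bs{\alpha}^k + AP_k^\perp\bs{q}^k \;=\; AQ_k\bs{\alpha}^k \;+\; \cI_k(A) \;+\; \rho P_kA^\top P_k^\perp\bs{q}^k,
\]
the second equality being merely the definition of $\cI_k(A)$. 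Next, for the first piece I substitute the AMP recursion column by column, $A\bs{q}^{\ell-1}=\bs{u}^\ell+\rho d_{\ell-1}\bs{q}^{\ell-2}$, with the conventions $\bs{q}^{-1}=\bs{0}$ and $d_0=0$ taking care of the $\ell=1$ boundary. Collecting everything yields
\[
\bs{u}^{k+1} \;=\; \sum_{\ell=1}^k\alpha_\ell^k\bs{u}^\ell \;+\; \cI_k(A) \;+\; \rho\left[P_kA^\top P_k^\perp\bs{q}^k \;-\; d_k\bs{q}^{k-1} \;+\; \sum_{\ell=1}^k\alpha_\ell^k d_{\ell-1}\bs{q}^{\ell-2}\right].
\]

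It then remains to identify the bracketed residual with $Q_k(Q_k^\top Q_k)^\dag\bigl(\bs{v}^{k,k}-\sum_{\ell=1}^k\alpha_\ell^k\bs{v}^{k,\ell-1}\bigr)$. A direct expansion of the $\bs{v}^{k,\ell}$'s, together with $\bs{q}^k-Q_k\bs{\alpha}^k=P_k^\perp\bs{q}^k$, gives
\[
\bs{v}^{k,k}-\sum_{\ell=1}^k\alpha_\ell^k\bs{v}^{k,\ell-1} \;=\; U_k^\top P_k^\perp\bs{q}^k \;-\; d_kQ_k^\top\bs{q}^{k-1} \;+\; Q_k^\top\sum_{\ell=1}^k\alpha_\ell^k d_{\ell-1}\bs{q}^{\ell-2}.
\]
The crucial identity is $U_k^\top P_k^\perp\bs{q}^k=Q_k^\top A^\top P_k^\perp\bs{q}^k$. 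To see this I would write the AMP iteration in matrix form as $U_k=AQ_k-\rho\,[\bs{0},\bs{q}^0,\ldots,\bs{q}^{k-2}]\,\diag(d_0,\ldots,d_{k-1})$: each column of the correction factor lies in $\colspan Q_k$ and is therefore annihilated by $P_k^\perp$. Premultiplying the preceding display by $\rho Q_k(Q_k^\top Q_k)^\dag$ and using that $\bs{q}^{k-1}$ as well as the $\bs{q}^{\ell-2}$'s (for $\ell\ge 2$) are columns of $Q_k$, so that $P_k=Q_k(Q_k^\top Q_k)^\dag Q_k^\top$ acts as the identity on them, reproduces the bracketed residual exactly.

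The main obstacle is the bookkeeping between the shifted indices $\ell-1$, $\ell-2$ and the boundary conventions $\bs{q}^{-1}=\bs{0}$, $d_0=0$, together with spotting the identity $U_k^\top P_k^\perp=Q_k^\top A^\top P_k^\perp$ which is exactly what causes the Onsager-type correction to collapse into the claimed $Q_k(Q_k^\top Q_k)^\dag$-factor. Once that identity is recognized the remainder is routine linear algebra. The base statement $\bs{u}^1=A\bs{q}^0=\cI_0(A)$ is immediate from the conventions $P_0=0_{n\times n}$, $P_0^\perp=I_n$, and $d_0=0$.
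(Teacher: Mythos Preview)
Your proof is correct and follows essentially the same approach as the paper: both decompose $\bs{q}^k=P_k\bs{q}^k+P_k^\perp\bs{q}^k$, isolate $\cI_k(A)$ by adding and subtracting $\rho P_kA^\top P_k^\perp\bs{q}^k$, and rely on the matrix identity $U_k=AQ_k-\rho\,[\bs{0},Q_{k-1}]\diag(d_0,\ldots,d_{k-1})$ (equivalently your $U_k^\top P_k^\perp=Q_k^\top A^\top P_k^\perp$) to collapse the remaining terms into the $Q_k(Q_k^\top Q_k)^\dag$ form. The only cosmetic difference is that the paper works forward from the decomposition to the $\bs{v}^{k,\ell}$ expression, whereas you expand both sides and match them; the algebraic content is identical.
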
 

Proof of Proposition \ref{prop:struct} is postponed to Appendix~\ref{prop:struct-proof}.

\begin{remark}[mesurability issues]
    Consider the decomposition of $\bu^{k+1}$ in Proposition \ref{prop:struct} then
\[
\sum_{\ell=1}^{k} \alpha_\ell^k \bu^\ell + 
 \rho Q_k  \left(Q_k^\top Q_k\right)^{\dag} \left(\bs{v}^{k,k} 
- \sum_{\ell=1}^k \alpha_\ell^k \bs{v}^{k,\ell-1}\right),
\]
is $\cF_k$--measurable while in general the term $\cI_k(A)$ is
not. The strategy developed by Bolthausen amounts to replace matrix $A$ in $\cI_k(A)$ by some matrix $\widetilde A \indep\hspace{-0.1cm} |_{\cF_k} A$ before proceeding to the induction. This is the goal of next section.
\end{remark}

\subsection{Proof of Theorem~\ref{th:main}: adaptation of Bolthausen conditioning argument} 
\label{subsec:bolthausen}
\begin{proposition}
\label{prop:bolt} 
For $k\geq 1$, there exists a $n\times n$ matrix $\widetilde A$ such that $\widetilde{A} \laweq A$, $\widetilde{A}\indep \cF_k$ and 
$$
(A-\rho  P_k A^\top)P_k^\perp \ \cond{\cF_k} \ 
   (\widetilde{A}-\rho  P_k \widetilde{A}^\top)P_k^\perp\, .
   $$
In particular, 
$$
\cI_k(A) \ \cond{\cF_k}\  \cI_k(\widetilde A)\, .
$$
\end{proposition}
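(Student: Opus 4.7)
The plan is to exploit the Gaussian structure of $A$ to identify precisely which linear functionals of $A$ are encoded in $\cF_k$, and then to show that $(A-\rho P_k A^\top)P_k^\perp$ is orthogonal to that information. I proceed in three steps.

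\textbf{Step 1 (Content of $\cF_k$).} Unfolding the AMP recursion $\bs{u}^{\ell+1} = A\bs{q}^\ell - \rho d_\ell \bs{q}^{\ell-1}$ together with $\bs{q}^\ell = h_\ell(\bs{u}^\ell,B)$ and $d_\ell = \langle \partial_1 h_\ell(\bs{u}^\ell,B)\rangle_n$, a straightforward induction shows that, given $(B,\bs{u}^0)$, the families $\bs{u}^1,\dots,\bs{u}^k$ and $A\bs{q}^0,\dots,A\bs{q}^{k-1}$ generate the same $\sigma$-field, so $\cF_k = \sigma(B,\bs{u}^0,AQ_k)$. In particular $Q_k$, $P_k$, and $P_k^\perp$ are $\cF_k$-measurable.

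\textbf{Step 2 (Cross-covariance cancellation).} The core claim is that, for every deterministic $Q\in\RR^{n\times k}$, every orthogonal projector $P$ with $PQ=Q$, and every deterministic $\bs{v}$ with $Q^\top\bs{v}=0$, the Gaussian vector $(A-\rho P A^\top)\bs{v}$ is uncorrelated with---hence, by joint Gaussianity, independent of---the Gaussian matrix $AQ$. Using the entrywise covariance $\EE[A_{ij}A_{lk}] = n^{-1}(\delta_{il}\delta_{jk}+\rho\,\delta_{ik}\delta_{jl})$ together with $PQ=Q$ and $Q^\top\bs{v}=0$, a direct expansion yields
\[
\EE\bigl[((A-\rho PA^\top)\bs{v})_i\,(AQ)_{lm}\bigr] \;=\; \frac{\rho}{n}\,v_l\,Q_{im}\;-\;\rho\cdot\frac{1}{n}\,v_l\,(PQ)_{im} \;=\; 0.
\]
Since every column of $(A-\rho P A^\top)P^\perp$ is of the form $(A-\rho P A^\top)\bs{v}$ with $\bs{v}$ in the orthogonal complement of $\colspan Q$, this gives $(A-\rho P A^\top)P^\perp \indep AQ$ as a joint statement on Gaussian objects.

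\textbf{Step 3 (Assembly).} Enlarge the probability space so that $\widetilde A$ is independent of $\cF_k$ with $\widetilde A \eqlaw A$. By Step~1, conditioning on the event $\{B=b,\bs{u}^0=u^0,AQ_k=C\}$ freezes $Q_k$ and $P_k$ to deterministic matrices $Q_0=Q_0(b,u^0,C)$ and $P_0=P_0(b,u^0,C)$. Since $A\indep(B,\bs{u}^0)$, Step~2 applied with $(Q,P)=(Q_0,P_0)$ shows that the conditional law of $(A-\rho P_0 A^\top)P_0^\perp$ given $\{AQ_0=C\}$ equals its unconditional law for that deterministic $(Q_0,P_0)$. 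Because $\widetilde A \indep \cF_k$ and $\widetilde A \eqlaw A$, this same distribution is also the conditional law of $(\widetilde A-\rho P_k \widetilde A^\top)P_k^\perp$ given $\cF_k$, establishing the desired equality. The ``in particular'' assertion $\cI_k(A)\cond{\cF_k}\cI_k(\widetilde A)$ follows by applying both sides to the $\cF_k$-measurable vector $\bs{q}^k$; a fully formal wrap-up can be based on Lemma~\ref{lem:ExtendedLawEq}.

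\textbf{Main obstacle.} The crux is Step~2: one must identify precisely the Onsager correction $A-\rho PA^\top$ that produces the exact covariance cancellation in the elliptic model. For a GOE ($\rho=1$, $A=A^\top$) this reduces to the classical Bolthausen computation; for i.i.d.\ Gaussian entries ($\rho=0$), $A$ and $A^\top$ are uncorrelated so no correction is needed; the intermediate regime pins down precisely the multiplier $\rho$ that ultimately appears in the Onsager term of the elliptic AMP scheme \eqref{eq:AMP_elliptic}.
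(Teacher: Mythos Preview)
Your argument is correct and takes a genuinely more direct route than the paper's. The paper does not prove Proposition~\ref{prop:bolt} directly; it first establishes by induction on $k$ a stronger statement (Proposition~\ref{prop:bolt++}) about the pair $\bigl((A-\rho A^\top)O_k,\ O_k^\top A^\top O_k\bigr)$, using the independence result of Proposition~\ref{prop:mainIndep} and Lemma~\ref{lem:ExtendedLawEq} at each inductive step, and only then derives Proposition~\ref{prop:bolt} from it. Your Step~2 is essentially the same covariance cancellation as Proposition~\ref{prop:mainIndep}, but you phrase it as $(A-\rho PA^\top)P^\perp\indep AQ$, which is exactly what is needed for a one-shot conditioning on $\cF_k=\sigma(B,\bs{u}^0,AQ_k)$ rather than an induction. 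What you gain is economy: you avoid carrying the auxiliary object $\mathcal K(A,O_k)$ through an induction. What the paper's approach buys is a cleaner treatment of the randomness of $Q_k$---the inductive step is always against a $\cF_{k-1}$-measurable (hence ``frozen'') $O_k$, so there is no circularity to untangle.

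The one place your write-up is a bit thin is Step~3: the sentence ``conditioning on $\{B=b,\bs{u}^0=u^0,AQ_k=C\}$ freezes $Q_k$'' hides a short but necessary argument, because $Q_k$ appears on both sides of $AQ_k=C$. The resolution is a brief induction of its own: given $(b,u^0)$, the columns of $Q_k$ are revealed sequentially, and at each stage the next $\bs{q}^j$ becomes deterministic before the linear constraint $A\bs{q}^j=c_j$ is imposed; hence conditioning on $\cF_k$ really is conditioning $A$ on $AQ_0=C$ for a deterministic $Q_0$. Once that is said, your Step~2 closes the argument immediately. (Your final pointer to Lemma~\ref{lem:ExtendedLawEq} is a bit misleading: that lemma is tailored to the paper's step-by-step induction and is not what your direct approach needs.)
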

Proposition \ref{prop:bolt}, the proof of which is postponed to the end of Section \ref{subsec:bolthausen}, is a consequence of a more general result stated in Proposition \ref{prop:bolt++}.

Recall that $Q_k=[\bs{q}^0,\cdots, \bs{q}^{k-1}]$ and that
$ P_k = \Pi_{\colspan Q_k} = Q_k \left(Q_k^\top Q_k\right)^\dag Q_k^\top$. Denote by 
\begin{equation}
\label{eq:def-qperp}
\barpp{\bs{q}}{k} \triangleq P_k^{\perp} \bq^k\ .
\end{equation}
Notice that $\barpp{\bq}{k}$ is $\cF_k$-measurable. Let $r_k$ be
the rank of the matrix $Q_k$ - notice that $r_k$ is $\mathcal{F}_{k-1}$--measurable. Let $O_k$ a
$\mathcal{F}_{k-1}$--measurable
$n \times (n-r_k)$ matrix which columns form an
orthonormal basis of $\colspan Q_k^{\perp}$. Such a matrix exists: for instance, consider $\bs{q}^0, \cdots, \bs{q}^{k-1}$ and the deterministic canonical base $(\bs{e}^\ell)_{\ell\in [n]}$ of $\mathbb{R}^n$ and construct by Gram-Schmidt procedure an orthonormal basis of $\mathbb{R}^n$ whose first $r_k$ vectors span $Q_k$. Build $O_k$ out of the remaining $n-r_k$ vectors. In particular,
$$
O_k O_k^\top =P_k^{\perp} \qquad \textrm{and}\qquad O_k^\top O_k = I_{n-r_k}\, .
$$

\begin{proposition}
\label{prop:bolt++}
For every $k\geq 1$, it holds that 
\[
    \begin{bmatrix}
    (A-\rho A^\top)O_k \\
    O_k^\top A^\top O_k
\end{bmatrix} 
  \cond{\cF_k} 
    \begin{bmatrix}
    (\widetilde{A}-\rho \widetilde{A}^\top)O_k \\
    O_k^\top \widetilde{A}^\top O_k
\end{bmatrix}, 
\]
where $\widetilde{A} \laweq A$ and $\widetilde{A}$ is independent of $\cF_k$.
\end{proposition}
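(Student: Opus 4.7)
I would prove Proposition~\ref{prop:bolt++} by induction on $k$, using two tools in tandem: Proposition~\ref{prop:mainIndep}, which produces the required independence between a residual block and a test direction for a genuinely elliptic matrix, and Lemma~\ref{lem:ExtendedLawEq}, which upgrades such an independence into a conditional equality in distribution after enlarging the $\sigma$-field by the new information that $\bs{u}^{k+1}$ contributes.

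For the base case $k=1$, both $\bs{q}^0$ and $O_1$ are $\sigma(B,\bs{u}^0)$-measurable while $A\indep(B,\bs{u}^0)$. Applying Proposition~\ref{prop:mainIndep} to $\sqrt{n}A$ with $P=0$, $\bs{q}=\bs{q}^0$, $U=O_1$ (the hypotheses $\bs{q}^\top U=0$ and $PU=0$ are immediate) yields
\[
A\bs{q}^0 \indep\hspace{-0.1cm}|_{\sigma(B,\bs{u}^0)} \bigl[(A-\rho A^\top)O_1,\ O_1^\top A^\top O_1\bigr],
\]
and Lemma~\ref{lem:ExtendedLawEq} with $\cF=\sigma(B,\bs{u}^0)$, $\cG=\sigma(\bs{u}^1)$ and $\bar X=\widetilde A_1$ a fresh independent copy of $A$ delivers $\widetilde A_1\indep\cF_1$ realizing the statement at $k=1$. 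For the inductive step, assume the claim at step $k$ with independent witness $\widetilde A_k$. Writing $O_{k+1}=O_kT$ for an $\cF_k$-measurable matrix $T$ with orthonormal columns satisfying $T^\top \bs{s}=0$, where $\bs{s}=O_k^\top \bs{q}^k$ (so $\barpp{\bs{q}}{k}=O_k\bs{s}$), both $(A-\rho A^\top)O_{k+1}=(A-\rho A^\top)O_k T$ and $O_{k+1}^\top A^\top O_{k+1}=T^\top O_k^\top A^\top O_k T$ are $\cF_k$-measurable linear transformations of the pair $\bigl[(A-\rho A^\top)O_k,\ O_k^\top A^\top O_k\bigr]$. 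Hence the induction hypothesis gives at once
\[
\begin{bmatrix}(A-\rho A^\top)O_{k+1}\\ O_{k+1}^\top A^\top O_{k+1}\end{bmatrix}\cond{\cF_k}\begin{bmatrix}(\widetilde A_{k+1}-\rho \widetilde A_{k+1}^\top)O_{k+1}\\ O_{k+1}^\top\widetilde A_{k+1}^\top O_{k+1}\end{bmatrix}
\]
for any fresh independent copy $\widetilde A_{k+1}$ of $A$ independent of $\cF_k$, verifying condition (iv) of Lemma~\ref{lem:ExtendedLawEq} with $\cF=\cF_k$ and $\cG=\sigma(\bs{u}^{k+1})$.

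The main obstacle is condition (v), the conditional independence
\[
\bigl[(A-\rho A^\top)O_{k+1},\ O_{k+1}^\top A^\top O_{k+1}\bigr]\ \indep\hspace{-0.1cm}|_{\cF_k}\ \bs{u}^{k+1}.
\]
Since $A\bs{q}^\ell=\bs{u}^{\ell+1}+\rho d_\ell \bs{q}^{\ell-1}$ is $\cF_{\ell+1}$-measurable for every $\ell<k$, the matrix $AQ_k$ is $\cF_k$-measurable; consequently $\bs{u}^{k+1}$ and $A\bs{q}^k$ differ by an $\cF_k$-measurable shift, and $A\bs{q}^k$ in turn differs from $(A-\rho P_k A^\top)\barpp{\bs{q}}{k}$ by another $\cF_k$-measurable correction (the $\rho P_k A^\top$ piece factors through $AQ_k$). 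A direct calculation gives the identity
\[
(A-\rho P_k A^\top)\barpp{\bs{q}}{k}=(A-\rho A^\top)O_k\bs{s}+\rho O_k\bigl(O_k^\top A^\top O_k\bigr)\bs{s},
\]
which exhibits this residual as a further $\cF_k$-measurable function of the same IH-pair. By the IH, the joint conditional law of $\bigl[(A-\rho A^\top)O_{k+1},\, O_{k+1}^\top A^\top O_{k+1},\, (A-\rho P_k A^\top)\barpp{\bs{q}}{k}\bigr]$ given $\cF_k$ therefore equals the one obtained by swapping $A$ for $\widetilde A_k$ throughout. Applying Proposition~\ref{prop:mainIndep} to $\sqrt{n}\widetilde A_k$ (which, conditionally on $\cF_k$, is a genuine elliptic matrix independent of the deterministic data) with $P=P_k$, $\bs{q}=\barpp{\bs{q}}{k}$, $U=O_{k+1}$ -- for which $\bs{q}^\top U=0$ and $P_k O_{k+1}=0$ hold since $\colspan O_{k+1}\subset\colspan Q_{k+1}^\perp\subset\colspan Q_k^\perp$ -- yields the corresponding conditional independence for the tilde quantities, which transfers back to $A$ by the joint-law equality and establishes (v). Lemma~\ref{lem:ExtendedLawEq} then produces $\widetilde A_{k+1}\indep\cF_{k+1}$ with $\widetilde A_{k+1}\laweq A$ and the desired conditional equality at step $k+1$.
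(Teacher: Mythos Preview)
Your proof is correct and follows essentially the same inductive strategy as the paper: establish the conditional law identity at level $k$ by expressing $\mathcal{K}(A,O_{k+1})$ and the residual $(A-\rho P_kA^\top)\barpp{\bs{q}}{k}$ as $\cF_k$-measurable functions of the IH pair, transfer to an independent copy $\widetilde A$, invoke Proposition~\ref{prop:mainIndep} on $\widetilde A$ to obtain the needed conditional independence, and close with Lemma~\ref{lem:ExtendedLawEq}. The only cosmetic difference is that the paper packages the same transfer via an auxiliary matrix $W_k=[\barpp{\bs{q}}{k-1}\,|\,O_k]$ together with an explicit split on the event $\{\barpp{\bs{q}}{k-1}\neq 0\}$, whereas you use the factorization $O_{k+1}=O_kT$ and the identity $(A-\rho P_kA^\top)\barpp{\bs{q}}{k}=(A-\rho A^\top)O_k\bs{s}+\rho O_k(O_k^\top A^\top O_k)\bs{s}$ directly; the two formulations are equivalent.
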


\begin{proof}%[Proof of Proposition \ref{prop:bolt++}] 
For $A\in\mathbb{R}^{n\times n}$ and $O\in\mathbb{R}^{n\times n^\prime}$  $(n^\prime\ge 1)$, let
\begin{equation*}
    \mathcal{K}(A,O) = \begin{bmatrix}
    (A-\rho A^\top)O \\
    O^\top A^\top O
\end{bmatrix}\,.
\end{equation*}
We prove the statement by induction on $k\geq 1$ and
begin by proving it for $k=1$. Notice that $Q_1=[\bs{q}^0]$, $O_1$ has dimension $n\times (n-1)$ and $(\bs{q}^0)^\top O_1=0$. Recall that $\mathcal{F}_1=\sigma\left(B,\bs{u}^0, \bs{u}^1 \right)$ and $\bs{u}^1 = A\bs{q}^0$. Taking into account the fact that $A \indep \mathcal{F}_0$ and applying Proposition~\ref{prop:mainIndep} with $P$ (in the proposition) equal to zero, we have:
$$
\mathcal{K}(A, O_1)  \quad \indep\hspace{-0.1cm}|_{\mathcal F_0} \quad \bs{u}^1\,.
$$
Now consider $\overline{A}$ independent from all the considered quantities, then 
$$
\mathcal{K}(A,O) \cond{\mathcal{F}_0} \mathcal{K}(\overline{A},O)
$$
We can now apply Lemma~\ref{lem:ExtendedLawEq} to prove the existence of $\widetilde{A}$ independent of $\mathcal{F}_1 = \sigma\left(\mathcal{F}_0, \{\bs{u}_1\}\right)$ satisfying
\begin{equation*}
    \mathcal{K}(A,O_1) \cond{\mathcal{F}_1} \mathcal{K}(\widetilde{A},O_1)\,.
\end{equation*}
The statement is proved for $k=1$. Suppose now that
\begin{equation}
 \mathcal{K}(A,O_{k-1})
  \cond{\cF_{k-1}} 
   \mathcal{K}(\widetilde{A}, O_{k-1})
\label{proof:HRBolthausen} 
\end{equation}
where $\widetilde{A}$ is independent of $\cF_{k-1}$, and let us prove that this 
equality holds for $k$. Notice that one can assume that $\widetilde{A}$ is independent of $\cF_k$, since this does not change the
conditional distribution of $\mathcal{K}(\widetilde{A}, O_{k-1})$ in \eqref{proof:HRBolthausen}.  

Recall that $\barpp{\bs{q}}{k} = P_k^{\perp} \bq^k$, and observe that the event
$$
E_k \triangleq \{r_{k}=r_{k-1}+1\} \in \cF_{k-1}
$$
coincides with the event $\{ \barpp{\bq}{k-1} \neq 0 \}$.  Define the matrix $W_k$ such that  $W_k \triangleq \begin{bmatrix} \barpp{\bs{q}}{k-1} \, | \, O_k\end{bmatrix}$ on $E_k$ and $W_k \triangleq O_k$ on $E_k^c$. The random matrix $W_k$ is
$\cF_{k-1}$-measurable, so is $O_{k-1}^\top W_k$.
 Moreover, $P_{k-1}^\perp W_k=W_k$. Write
\begin{equation*}
\begin{split}
    \begin{bmatrix}
        (A - \rho A^\top)W_k \\
        W_k^\top A^\top W_k 
    \end{bmatrix} &=
    \begin{bmatrix}
        (A - \rho A^\top)P_{k-1}^\perp W_k \\
        (P_{k-1}^\perp W_k)^\top A^\top P_{k-1}^\perp W_k 
    \end{bmatrix}\\
    &\stackrel{(a)}= 
    \begin{bmatrix}
        (A - \rho A^\top)O_{k-1} \left(O_{k-1}^\top W_k\right) \\
        \left(O_{k-1}^\top W_k\right)^\top O_{k-1}^\top A^\top O_{k-1} 
    \left(O_{k-1}^\top W_k\right)
    \end{bmatrix},
\end{split}
\end{equation*}
where equality $(a)$ holds because $P_{k-1}^\perp W_k=W_k$, then
using the induction hypothesis~\eqref{proof:HRBolthausen}, we get  
\begin{equation*}
    \begin{bmatrix}
        (A - \rho A^\top)W_k \\
        W^\top_k A^\top W_k 
    \end{bmatrix}  \cond{\cF_{k-1}} 
       \begin{bmatrix}
        (\widetilde{A} - \rho \widetilde{A} ^\top)W_k \\
        W_k^\top \widetilde{A}^\top W_k
    \end{bmatrix} , 
\end{equation*}
Substituting by the expression of $W_k$, we have proved  that:
\begin{equation} 
\label{eq:allInfoE} 
    \begin{bmatrix}
        (A - \rho A^\top)\barpp{\bs{q}}{k-1} & (A - \rho A^\top)O_k  \\
        (\barpp{\bs{q}}{k-1})^\top A^\top \barpp{\bs{q}}{k-1} & (\barpp{\bs{q}}{k-1})^\top A^\top O_k \\
        O_k^\top A^\top \barpp{\bs{q}}{k-1} & O_k^\top A^\top O_k 
    \end{bmatrix} \mathds{1}_{E_k}  
    \cond{\cF_{k-1}} 
    \begin{bmatrix}
        (\widetilde{A} - \rho \widetilde{A}^\top)\barpp{\bs{q}}{k-1} & (\widetilde{A} - \rho \widetilde{A}^\top)O_k  \\
        (\barpp{\bs{q}}{k-1})^\top \widetilde{A}^\top \barpp{\bs{q}}{k-1} & (\barpp{\bs{q}}{k-1})^\top \widetilde{A}^\top O_k \\
        O_k^\top \widetilde{A}^\top \barpp{\bs{q}}{k-1} & O_k^\top \widetilde{A}^\top O_k 
    \end{bmatrix}\mathds{1}_{E_k},
\end{equation} 
and
\begin{equation}
\label{eq:allInfoEc} 
    \begin{bmatrix}
    (A-\rho A^\top)O_{k} \\
    O_{k}^\top A^\top O_{k}
\end{bmatrix}\mathds{1}_{E_k^c} 
  \cond{\cF_{k-1}} 
    \begin{bmatrix}
    (\widetilde{A}-\rho \widetilde{A}^\top)O_{k} \\
    O_{k}^\top \widetilde{A}^\top O_{k}
\end{bmatrix}\mathds{1}_{E_k^c}.
\end{equation} 
Recall that $\mathcal{F}_k = \sigma\left(\mathcal{F}_{k-1},\{\bs{u}^{k}\}\right)$, let us study the quantity $\bs{u}^k$.
\begin{equation*}
     \begin{split}
        \bu^k &= A \bq^{k-1} - d_{k-1} \bq^{k-2} \\
        &= AP_{k-1}\bq^{k-1} + AP_{k-1}^{\perp}\bq^{k-1} - d_{k-1} \bq^{k-2}\\
        &= AP_{k-1}\bs{q}^{k-1} 
  + \rho \left(AP_{k-1}\right)^\top \barpp{\bs{q}}{k-1} - d_{k-1} \bq^{k-2} 
   +  \left(A - \rho P_{k-1} A^\top\right)  \barpp{\bs{q}}{k-1}.
    \end{split}
\end{equation*}
We can re-write this expression as 
$$\bu^k = \bs{m}^{k-1} + \bs{z}^{k},$$ 
with 
\begin{equation*}
\bs{z}^{k} \triangleq\left(A - \rho P_{k-1} A^\top\right)  \barpp{\bs{q}}{k-1} \quad \mbox{ and } \quad
 \bs{m}^{k-1} \ \mbox{is }\mathcal{F}_{k-1} \mbox{-measurable}.
\end{equation*}
We now want to prove that 
\begin{equation}
\label{indep:uAndK}
    \bs{u}^k \indep\hspace{-0.1cm}|_{\mathcal{F}_{k-1}}\, \mathcal{K}(A, O_k)  ,
\end{equation}
which is equivalent to 
\begin{equation}
    \bs{z}^k \indep\hspace{-0.1cm}|_{\mathcal{ F}_{k-1}}\, \mathcal{K}(A, O_k),
\end{equation}
which can also be reduced into two smaller problems (see, \emph{e.g.}, \cite[Lemma 7.9.a]{feng2021unifying}),
\begin{eqnarray}
     \bs{z}^k \mathds{1}_{E_k} \indep\hspace{-0.1cm}|_{\mathcal{F}_{k-1}} \mathcal{K}(A, O_k)\mathds{1}_{E_k} , \label{indep:zAndKE}\\
        \bs{z}^k \mathds{1}_{E_k^c} \indep\hspace{-0.1cm}|_{\mathcal{F}_{k-1}} \mathcal{K}(A, O_k)\mathds{1}_{E_k^c} \label{indep:zAndKEc}.
\end{eqnarray}
Let us begin by showing \eqref{indep:zAndKE}. By Equation~\eqref{eq:allInfoE} we have the following equality of joint laws,
\begin{equation*}
    \left(\bs{z}^k \mathds{1}_{E_k}, \ \mathcal{K}(A, O_k)\mathds{1}_{E_k}\right) \cond{\mathcal{F}_{k-1}} \left(\bs{z}^k \mathds{1}_{E_k}, \ \mathcal{K}(\widetilde{A}, O_k)\mathds{1}_{E_k}\right),
\end{equation*}
so in order to show \eqref{indep:zAndKE}, it suffices to show that
\begin{equation*}
    \bs{z}^k \mathds{1}_{E_k} \indep\hspace{-0.1cm}|_{\mathcal{F}_{k-1}} \mathcal{K}(\widetilde{A}, O_k)\mathds{1}_{E_k}.
\end{equation*}
But since $\widetilde{A}$ is independent of $\mathcal{F}_{k-1}$, this is a direct consequence of Proposition~\ref{prop:mainIndep}. Similarly, by noticing that $\bs{z}^k\mathds{1}_{E_k^c}=0$ and using Equation~\eqref{eq:allInfoEc} and the fact that $0$ is independent of any gaussian vector we can prove \eqref{indep:zAndKEc}. Now using \eqref{indep:uAndK} and the induction hypothesis \eqref{proof:HRBolthausen} we can finally apply Lemma~\ref{lem:ExtendedLawEq} to complete the proof.
\end{proof}
We are now in position to prove Proposition \ref{prop:bolt}.
\begin{proof}[Proof of Proposition \ref{prop:bolt}] 
To prove Proposition~\ref{prop:bolt} using Proposition~\ref{prop:bolt++}, we write
$(A-\rho  P_k A^\top)P_k^\perp = (A - \rho A^\top) O_k O_k^\top + 
 \rho O_k O_k^\top A^\top O_k O_k^\top$, and we use Proposition~\ref{prop:bolt++}
along with the following well-known result 
(see, \emph{e.g.}, \cite[Lemma 7.6.c]{feng2021unifying}): If $X$, $X'$, and
$Y$ are random vectors on a probability space, and $\cF$ is a 
$\sigma$--field on this space such that $X \cond{\cF} X'$ and that 
$Y$ is $\cF$--measurable, then, for each measurable function $\varphi$, it
holds that $\varphi(X,Y) \cond{\cF} \varphi(X',Y)$. 
\end{proof} 
Taking advantage of Proposition \ref{prop:bolt}, we can improve Proposition \ref{prop:struct} by replacing ${\mathcal I}_k(A)$ by ${\mathcal I}_k(\widetilde A)$. We also replace some random quantities by their deterministic equivalents. 

Recall the definition of matrices $R^k$ given by the Density Evolution equations \eqref{eq:DE}, we define three related quantities $\sigma_k^2 \in\mathbb{R}^+$, $\stdnew_k\in \mathbb{R}^+$ and $\bar{\bs{\alpha}}^k \in \mathbb{R}^k$, such as 
\begin{eqnarray}
    \sigma_k^2 &=& R_{k,k}\,, \\
    \varnew_{k+1} &=& \sigma_{k+1}^2 - \left(R^{k+1}_{[k],k+1}\right)^\top \left(R^{k}\right)^{-1} \left(R^{k+1}_{[k],k+1}\right)\,,\\
   \bs{\bar{\alpha}}^k &=& \left(R^k\right)^{-1} R^{k+1}_{[k],k+1}\,.
\end{eqnarray}
\begin{remark}
    Notice that $\varnew_{k+1}$ is the Schur complement of $R^k$ in the matrix $R^{k+1}$. One should think of $\stdnew_k$  and $\bar{\bs{\alpha}}^k$ as the deterministic equivalents of $\frac{\| \barpp{\bq}{k} \|}{\sqrt{n}} $ and $\bs\alpha^k$ respectively when $n$ is large.
\end{remark}

\begin{proposition}
\label{prop:ubar}
Using the previous notations we have the following decomposition of $\bs{u}^{k+1}$.
\begin{align*} 
\bu^{k+1} &\cond{\cF_k} \sum_{\ell=1}^{k} \alpha_\ell^k \bu^\ell + 
 \rho Q_k  \left(Q_k^\top Q_k\right)^{\dag} \left(\bs{v}^{k,k} 
- \sum_{\ell=1}^k \alpha_\ell^k \bs{v}^{k,\ell-1}\right) + 
    (\Tilde A - \rho P_k \Tilde A^\top) \barpp{\bq}{k} \\
 &\cond{\cF_k} \bar{\bs{u}}^{k+1} 
       + \Delta^{k+1},
\end{align*} 
where
\begin{equation}\label{eq:def-ubar}
\bar{\bs{u}}^{k+1} \ \triangleq\  \sum_{\ell=1}^{k} \bar\alpha_\ell^k \bu^\ell + \stdnew_k \bs{\xi}^{k+1}\,,
\end{equation}
and $\Delta^{k+1}$ is defined on the events $E_k \triangleq \{\barpp{\bq}{k}\neq \bs{0}\}$ and $E_k^c = \{\barpp{\bq}{k}= \bs{0}\}$ by 
\begin{eqnarray*}
\Delta^{k+1} &\triangleq& 
\sum_{\ell=1}^{k} \left( \alpha^k_\ell - \bar\alpha_\ell^k \right) \bu^\ell 
 + \rho Q_k  \left(Q_k^\top Q_k\right)^{\dag} \left(\bs{v}^{k,k} 
- \sum_{\ell=1}^k \alpha_\ell^k \bs{v}^{k,\ell-1}\right) \\
 && + 
 \left(\frac{\| \barpp{\bq}{k} \|}{\sqrt{n}} - \stdnew_k \right) \bs{\xi}^{k+1} 
 + \left(\sqrt{1-\rho^2} - 1\right)\frac{\| \barpp{\bq}{k} \|}{\sqrt{n}}  P_k \bs{\xi}^{k+1} 
 +\left(\sqrt{1+\rho}-1\right)\frac{\barpp{\bq}{k}(\barpp{\bq}{k})^\top}{\sqrt{n} \| \barpp{\bq}{k} \|} \bs{\xi}^{k+1}\, 
\end{eqnarray*} 
and
\begin{equation*}
    \Delta^{k+1} \triangleq 
 \sum_{\ell=1}^{k} \left( \alpha^k_\ell - \bar\alpha_\ell^k \right) \bu^\ell 
 + \rho Q_k  \left(Q_k^\top Q_k\right)^{\dag} \left(\bs{v}^{k,k} 
- \sum_{\ell=1}^k \alpha_\ell^k \bs{v}^{k,\ell-1}\right),
\end{equation*}
respectively.
\end{proposition}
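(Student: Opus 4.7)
The plan is to derive this decomposition in two main steps: first use Proposition~\ref{prop:struct} to get an exact (un-conditioned) expression for $\bu^{k+1}$, then apply the Bolthausen-type conditioning result of Proposition~\ref{prop:bolt} to replace $A$ by an independent copy $\Tilde A$, and finally unpack the resulting Gaussian innovation via Proposition~\ref{innov}.

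More precisely, Proposition~\ref{prop:struct} already yields
\[
\bu^{k+1} = \sum_{\ell=1}^{k} \alpha_\ell^k \bu^\ell + \rho\, Q_k(Q_k^\top Q_k)^{\dag}\!\left(\bs{v}^{k,k} - \sum_{\ell=1}^k \alpha_\ell^k \bs{v}^{k,\ell-1}\right) + \cI_k(A),
\]
with $\cI_k(A)=(A-\rho P_k A^\top)\barpp{\bq}{k}$. Since everything outside $\cI_k(A)$ is $\cF_k$-measurable, Proposition~\ref{prop:bolt} combined with the standard fact that $\cF_k$-measurable terms can be carried through a conditional equality in law directly gives the first line of the display (replacing $A$ by $\Tilde A$ inside $\cI_k$). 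The second line is obtained by a direct Gaussian computation, so the remaining work is to identify $\stdnew_k \bs{\xi}^{k+1}$ and collect the error into $\Delta^{k+1}$.

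For this, distinguish $E_k=\{\barpp{\bq}{k}\neq 0\}$ from its complement. On $E_k^c$ the innovation $(\Tilde A-\rho P_k\Tilde A^\top)\barpp{\bq}{k}$ vanishes, and the stated form of $\Delta^{k+1}$ is obtained merely by adding and subtracting $\sum_\ell \bar\alpha_\ell^k \bu^\ell$. On $E_k$, set $\bs{q}=\barpp{\bq}{k}/\|\barpp{\bq}{k}\|$; then $\bs{q}$ is a unit $\cF_k$-measurable vector with $P_k\bs{q}=0$. Applying Proposition~\ref{innov} to $\sqrt{n}\,\Tilde A\sim\Elliptic(n,\rho)$ (conditionally on $\cF_k$, since $\Tilde A\indep\cF_k$) gives
\[
(\Tilde A-\rho P_k \Tilde A^\top)\barpp{\bq}{k} \ \cond{\cF_k}\ \frac{\|\barpp{\bq}{k}\|}{\sqrt{n}}\, M\,\bs{\xi}^{k+1},
\]
where $\bs{\xi}^{k+1}\sim\cN(0,I_n)$ is independent of $\cF_k$ and $M$ is the symmetric square root of $I-\rho^2 P_k+\rho\,\bs{q}\bs{q}^\top$. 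Since $P_k$ and $\bs{q}\bs{q}^\top$ act on mutually orthogonal subspaces, the square root factors explicitly as
\[
M = I + (\sqrt{1-\rho^2}-1)P_k + (\sqrt{1+\rho}-1)\bs{q}\bs{q}^\top,
\]
whose eigenvalue structure is straightforward to verify.

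Plugging this $M$ into the display, replacing $\bs{q}\bs{q}^\top$ by $\barpp{\bq}{k}(\barpp{\bq}{k})^\top/\|\barpp{\bq}{k}\|^2$ and absorbing one factor of $\|\barpp{\bq}{k}\|$, and finally adding/subtracting $\stdnew_k \bs{\xi}^{k+1}$ and $\sum_\ell \bar\alpha_\ell^k \bu^\ell$, yields exactly the form of $\Delta^{k+1}$ stated on $E_k$. The main technical point, and the one requiring care, is the covariance computation and the algebraic identification of its square root: one must correctly pair the $(1-\rho^2)$-eigenspace with the projection $P_k$ and the $(1+\rho)$-eigenspace with the rank-one direction $\bs{q}\bs{q}^\top$, which is where the elliptic structure with parameter $\rho$ manifests itself (for $\rho=1$ this reduces to the GOE computation in~\cite{feng2021unifying}, and for $\rho=-1$ it collapses to the purely antisymmetric case). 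Once the square root is correctly identified, the rest is bookkeeping.
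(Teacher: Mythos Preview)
Your proposal is correct and follows essentially the same approach as the paper: invoke Proposition~\ref{prop:struct}, replace $A$ by $\Tilde A$ via Proposition~\ref{prop:bolt}, apply Proposition~\ref{innov} to identify the conditional Gaussian law of the innovation, and write out the explicit square root $I+(\sqrt{1-\rho^2}-1)P_k+(\sqrt{1+\rho}-1)\bs{q}\bs{q}^\top$ before collecting terms. Your treatment of $E_k^c$ is slightly more explicit than the paper's (which simply notes that the innovation vanishes there), but the logic is the same.
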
 
A very similar result is obtained in \cite[Section 6.4]{feng2021unifying}. 
\begin{remark}
    The aim of this proposition is to approximate the asymptotic behavior of the distribution of the iterates $\bs{u}^{k+1}$ with the distribution of $\bar{\bs{u}}^{k+1} $ which is easier to handle provided that $\bar\alpha_\ell^k$ and $\stdnew_k$ are deterministic quantities. This is achieved by proving that the difference $\Delta^{k+1}$ is asymptotically negligible.
\end{remark}

\begin{proof} 
We only address the case where $\barpp{\bs{q}}{k}\neq 0$. In the other case, the term $(\Tilde A - \rho P_k \Tilde A^\top) \barpp{\bq}{k}=0$ does not need to be handled.
The starting point is the decomposition of $\bs{u}^{k+1}$ in Proposition~\ref{prop:struct}.
By Proposition~\ref{innov}, the conditional distribution of $(\widetilde{A}-\rho  P_k \widetilde{A}^\top) P_k^\perp \bq^k = 
(\widetilde{A}-\rho  P_k \widetilde{A}^\top) \barpp{\bs{q}}{k}$ given $\cF_k$ is
\[
\cL\left( 
(\widetilde{A}-\rho  P_k \widetilde{A}^\top) \barpp{\bs{q}}{k} \, | \, \cF_k \right) 
= \cN\left( 0, 
\frac{1}{n}\| \barpp{\bq}{k} \|^2 \left( I - \rho^2 P_k + 
  \rho \barpp{\bq}{k} (\barpp{\bq}{k})^\top / \| \barpp{\bq}{k} \|^2 \right)
 \right).
\]
Therefore, letting $\bs{\xi}^{k+1} \sim \cN(0, I_n) $  be independent of $\cF_k$, it holds by Proposition~\ref{prop:bolt} that 
\[
(A-\rho  P_k A^\top)P_k^\perp \bq^k \cond{\cF_k} 
\frac{1}{\sqrt{n}}\| \barpp{\bq}{k} \| \left( I - \rho^2 P_k +\rho \barpp{\bq}{k} (\barpp{\bq}{k})^\top / \| \barpp{\bq}{k} \|^2\right)^{1/2} \bs{\xi}^{k+1}. 
\]
It is clear that we can take 
$$\left( I - \rho^2 P_k +\rho \barpp{\bq}{k} (\barpp{\bq}{k})^\top / \| \barpp{\bq}{k} \|^2 \right)^{1/2} = 
 I + \left(\sqrt{1-\rho^2} - 1\right) P_k + \left(\sqrt{1+\rho}-1\right)\barpp{\bq}{k}(\barpp{\bq}{k})^\top/ \| \barpp{\bq}{k} \|^2\,.$$
The proof of the proposition is completed. 
\end{proof}

\subsection{Proof of Theorem~\ref{th:main}: end of proof}
\label{subsec:end-of-proof-main}
The remainder of the proof of Theorem~\ref{th:main} follows almost verbatim the proof provided by \cite[Section 6]{feng2021unifying}. For completeness, we provide here the important steps of the proof without rigorously justifying all technical details.

We will proceed by induction on $k\geq 1$. Suppose that for any pseudo-Lipschitz function $\varphi$,
\begin{equation}
    \label{eq:HR}\tag{$\mathfrak{H}_k$}
     \frac{1}{n} \sum_{i=1}^n \varphi(B_{i,*}, u^1_i,\cdots, u^k_i) \cconverge \mathbb{E}\left[\varphi(\bs{\Bar{b}}, Z_1,\cdots,Z_k)\right],
    \end{equation}
and we want to prove this same convergence for $k+1$.\\
To do this, fix any pseudo-Lipschitz function $\psi$ and consider the following random variable
\begin{equation*}
    S = \frac{1}{n} \sum_{i=1}^n \psi(B_{i,*}, u^1_i,\cdots, u^k_i, u^{k+1}_i).
\end{equation*}
Recall the definition \eqref{eq:def-ubar} of $\bs{\bar u}^{k+1}$ and write $S$ as follows:
\begin{equation*}
\begin{split}
    S &\cond{\mathcal{F}_k} \frac{1}{n} \sum_{i=1}^n \psi(B_{i,*}, u^1_i,\cdots, u^k_i, \bar{u}^{k+1}_i)\\
    &+ \frac{1}{n}\left(  \sum_{i=1}^n \psi(B_{i,*}, u^1_i,\cdots, u^k_i, u^{k+1}_i) -  \sum_{i=1}^n \psi(B_{i,*}, u^1_i,\cdots, u^k_i, \bar{u}^{k+1}_i) \right), 
\end{split}
\end{equation*}
where $\bs{u}^{k+1}\cond{\mathcal{F}_k}\bar{\bs{u}}^{k+1} + \Delta^{k+1}$. The idea will be then to prove that
\begin{eqnarray}
     S_1 &\triangleq& \frac{1}{n} \sum_{i=1}^n \psi(B_{i,*}, u^1_i,\cdots, u^k_i, \bar{u}^{k+1}_i) \cconverge \mathbb{E}\left[\psi(\bs{\Bar{b}}, Z_1,\cdots,Z_k)\right],\label{eq:convS1}\\
     S_2 &\triangleq& \frac{1}{n}\left(  \sum_{i=1}^n \psi(B_{i,*}, u^1_i,\cdots, u^k_i, u^{k+1}_i) -  \sum_{i=1}^n \psi(B_{i,*}, u^1_i,\cdots, u^k_i, \bar{u}^{k+1}_i) \right) \cconverge 0.\label{eq:convS2}
\end{eqnarray}
Let us begin by proving the convergence in \eqref{eq:convS1}. This proof can be decomposed into two steps, the first step is to prove that the conditional expectation $\mathbb{E}[S_1 \mid \mathcal{F}_k]$ converges to the desired limit, and the second step (which is omitted) is to show that $S_1$ concentrates around its conditional expectation.

In the sequel, we shall rely on the following notation. Let $X$ a random variable and $X\indep Y$. We denote by $\mathbb{E}_X$ the expectation with respect to the distribution of $X$. In particular,
$$
\mathbb{E}_X f(X,Y) = \int f(x,Y)\mathbb{P}_X(dx) = \mathbb{E} ( f(X,Y)\mid Y)\, .
$$
Let us compute the conditional expectation of $S_1$ given $\mathcal{F}_k$.
\begin{equation*}
        \begin{split}
            \mathbb{E}\left[S_1\left.\right|\mathcal{F}_k\right] &= \frac{1}{n}\mathbb{E}\left[\left.\sum_{i=1}^n \psi\left(B_{i,*},u_i^1,\cdots,u_i^k,\sum_{l=1}^k \bar{\alpha}_l^k u_i^l + \stdnew_{k+1} \xi_i^{k+1}\right)\right| \ \mathcal{F}_k\right]\\
            &= \frac{1}{n}\sum_{i=1}^n\mathbb{E}_{\xi_i^{k+1}}\left[ \psi\left(B_{i,*},u_i^1,\cdots,u_i^k,\sum_{l=1}^k \bar{\alpha}_l^k u_i^l + \stdnew_{k+1} \xi_i^{k+1}\right)\right]\\
            & \triangleq \frac{1}{n}\sum_{i=1}^{n}\Psi\left(B_{i,*},u_i^1,\cdots,u_i^{k}\right).\\
        \end{split}
    \end{equation*}
    By \cite[Lemma 7.23]{feng2021unifying}, $\Psi$ is also a pseudo-lipschitz function, thus using the induction hypothesis \eqref{eq:HR} we can write
    \begin{equation*}
        \frac{1}{n}\sum_{i=1}^{n}\Psi\left(B_{i,*},u_i^1,\cdots,u_i^{k}\right) \cconverge\mathbb{E}\left[\Psi\left(Z_1,\cdots,Z_k\right)\right].
    \end{equation*}
    Now, given a random variable $\Tilde{Z}\sim\mathcal{N}(0,1)$ independent of $\mathcal{F}_k$ we can write
    \begin{equation*}
    \mathbb{E}\left[\Psi\left(Z_1, \cdots, Z_{k}\right)\right] = \mathbb{E}\left[\mathbb{E}_{\Tilde{Z}}\left[ \psi\left(Z_1, \cdots, Z_k, \sum_{\ell=1}^k \Bar{\alpha}_\ell^k Z_\ell + \stdnew_{k+1} \Tilde{Z}\right)\right] \right],
    \end{equation*}
     Put $Z_{k+1} \triangleq \sum_{\ell=1}^k \Bar{\alpha}_\ell^k Z_\ell + \stdnew_{k+1} \Tilde{Z}$, and observe that $\left(Z_1,\cdots,Z_{k+1}\right)\sim\mathcal{N}\left(0, R^{k+1}\right)$, then
    \begin{equation*}
    \mathbb{E}\left[\Psi\left(Z_1, \cdots, Z_{k}\right)\right] = \mathbb{E}\left[ \psi\left(Z_1, \cdots, Z_{k+1}\right)\right].
\end{equation*}
%\textit{Second step}, the proof of the concentration result of $S_1$ is very technical and will be omitted. \younes{J'ai l'impression que cette phrase n'ajoute aucune information supplémentaire. Est-ce que de la supprime et j'indique dans l'introduction de cette preuve que la concentration ne va pas être traité dans cette article ?}\\

Now let us give some proof elements for the convergence in \eqref{eq:convS2}. The idea is to simply use the pseudo-Lipschitz property and bound the term $S_2$ by the distance $\| \bs{u}^k$ - $\bar{\bs{u}}^k \|$. Thus the main ingredient of this proof is to show that 
\begin{equation}
\label{eq:Delta0}
    \frac{1}{\sqrt{n}} \lVert \Delta^{k+1} \rVert \cconverge 0.
\end{equation}
Recall the expression of $\Delta^{k+1}$ in Proposition~\ref{prop:ubar} which can be written as the sum of five terms:
\begin{equation}
\label{eq:defDelta}
    \Delta^{k+1} = \Delta^{(1)}+\Delta^{(2)}+\Delta^{(3)}+\Delta^{(4)}+\Delta^{(5)},
\end{equation}
where 
\begin{equation}
\label{eq:defDeltas}
    \begin{split}
        \Delta^{(1)} &=  \sum_{\ell=1}^{k} \left( \alpha^k_\ell - \bar\alpha_\ell^k \right) \bu^\ell\\
        \Delta^{(2)} &= \rho Q_k  \left(Q_k^\top Q_k\right)^{\dag} \left(\bs{v}^{k,k} 
- \sum_{\ell=1}^k \bs{\alpha}_\ell^k \bs{v}^{k,\ell-1}\right)\\
        \Delta^{(3)} &=  \left(\frac{1}{\sqrt{n}}\| \barpp{\bq}{k} \| - \stdnew_k \right) \xi^{k+1}\\
        \Delta^{(4)} &= \left(\sqrt{1-\rho^2} - 1\right) \frac{1}{\sqrt{n}}\| \barpp{\bq}{k} \|  P_k \xi^{k+1} \\
        \Delta^{(5)} &= \left(\sqrt{1+\rho}-1\right)\frac{\barpp{\bq}{k}(\barpp{\bq}{k})^\top}{\sqrt{n} \| \barpp{\bq}{k} \|} \bs{\xi}^{k+1} .\\
    \end{split}
\end{equation}

In order to prove \eqref{eq:Delta0}, it suffices to show that the normalized norm of each of $\{\Delta^{(j)}, \ j\in [5]\}$ converges to $0$. The key arguments of this proof can be summarized in the following lemma which are direct consequences of the induction hypothesis \eqref{eq:HR}.

\begin{proposition}Let $k\geq 1$ be a fixed integer and suppose that the induction hypothesis \eqref{eq:HR} is satisfied for rank $k$, i.e.
\begin{equation*}
    \forall \varphi \in PL_2\left(\mathbb{R}^{p+k}\right) \ \frac{1}{n}\sum_{i=1}^{n}\varphi\left(B_{i,*}, u_i^1,\cdots, u_i^k\right) \cconverge \mathbb{E}\left[\varphi\left(\bs{\bar{b}},Z_1,\dots,Z_k\right)\right]
\end{equation*}
where $\left(Z_1,\cdots,Z_k\right)$ is a centered gaussian vector of covariance matrix $R^k\in \mathbb{R}^{k\times k}$ which is defined recursively using the Density Evolution equations \eqref{eq:DE}. Then we have the following consequences

\begin{enumerate}[label=(c-1)]
    \item \label{prop:HRC1} For all $j\leq k$, $\frac{1}{n} \lVert \bs{u}^j \rVert^2 \cconverge \mathbb{E}\left[Z_j^2\right] $.
\end{enumerate}
\begin{enumerate}[label=(c-2)]
    \item \label{prop:HRC2} For all $j\leq k$, $\frac{1}{n} \lVert \bs{q}^j \rVert^2 \cconverge \mathbb{E}\left[h_j\left( Z_j,\bs{\bar{b}}\right)^2\right]$.
\end{enumerate}
\begin{enumerate}[label=(c-3)]
    \item \label{prop:HRC3} For all $i,j \leq k$, $\frac{1}{n} \langle \bs{q}^{i-1}, \bs{q}^{j-1} \rangle \cconverge \mathbb{E}\left[h_{i-1}\left(Z_{i-1},\bs{\bar{b}}\right)h_{j-1}\left(Z_{j-1},\bs{\bar{b}}\right)\right] = R^{k}_{i,j} $.
\end{enumerate}
\begin{enumerate}[label=(c-4)]
    \item \label{prop:HRC4} $\frac{1}{n} Q_k^\top Q_k = \left(\frac{1}{n} \langle \bs{q}^{i-1}, \bs{q}^{j-1} \rangle\right)_{1\leq i,j\leq k} \cconverge R^{k} $.
\end{enumerate}
\begin{enumerate}[label=(c-5)]
    \item \label{prop:HRC5} $\bs{\alpha}^k = \left(Q_k^\top Q_k\right)^{\dag} Q_k^\top q^k \cconverge \left(R^k\right)^{-1} R_{[1,k],k+1}^{k+1} = \Bar{\bs{\alpha}}^k$.
\end{enumerate}
\begin{enumerate}[label=(c-6)]
    \item \label{prop:HRC6} $\frac{1}{n}\lVert \barpp{\bs{q}}{k} \rVert^2 \cconverge \stdnew_{k+1}^2$.
\end{enumerate}
\begin{enumerate}[label=(c-7)]
    \item \label{prop:HRC7} For $j\leq k$, $d_j = \frac{1}{n} \sum_{i=1}^n h_j^\prime \left( u_i^j, B_{i,*} \right) \cconverge \mathbb{E}\left[ h_j^\prime \left(Z_j,\bs{\bar{b}}\right) \right] \triangleq \Bar{d}_j$.
\end{enumerate}
\label{prop:consequences}
\end{proposition}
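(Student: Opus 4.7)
My plan is to treat the seven items in three groups: (c-1)--(c-4) follow directly from the induction hypothesis $\mathfrak{H}_k$ applied to well-chosen pseudo-Lipschitz test functions, (c-5)--(c-6) follow by linear-algebraic manipulations and continuity of matrix operations, and (c-7) requires a Portmanteau-type argument which is the main obstacle.

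For (c-1)--(c-4), I would exhibit the test functions $\varphi_1(b, u_1, \ldots, u_k) = u_j^2$ for (c-1), $\varphi_2 = h_j(u_j, b)^2$ for (c-2), and $\varphi_3 = h_{i-1}(u_{i-1}, b)\, h_{j-1}(u_{j-1}, b)$ for (c-3), and check that they all belong to $PL_2(\mathbb{R}^{p+k})$. This is immediate for $\varphi_1$; for $\varphi_2$ and $\varphi_3$ one combines the Lipschitz bound $|h_j(u, b)| \le C(1 + |u| + \|b\|)$ (consequence of assumption \ref{ass:A3}) with the identity $|a^2 - b^2| \le |a-b|(|a|+|b|)$. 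Applying $\mathfrak{H}_k$ to each $\varphi$ and identifying the limits against the recursive formula \eqref{eq:DE-updated} for $R^k$ yields (c-1)--(c-3). Item (c-4) is a matrix-level reformulation of (c-3).

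For (c-5) and (c-6) I would exploit the positive definiteness of $R^k$, which is guaranteed by \ref{ass:A4} together with \cite[Lemma 2.2]{feng2021unifying}. Since $\frac{1}{n} Q_k^\top Q_k \cconverge R^k$ by (c-4), the empirical Gram matrix eventually lies in the open set of invertible matrices, and matrix inversion is continuous there; complete convergence is preserved under continuous maps, so $\bigl(\frac{1}{n} Q_k^\top Q_k\bigr)^{-1} \cconverge (R^k)^{-1}$. Combined with $\frac{1}{n} Q_k^\top \bs{q}^k \cconverge R^{k+1}_{[k], k+1}$ (which again comes from (c-3)), this yields (c-5). For (c-6), I would use the decomposition $\barpp{\bs{q}}{k} = \bs{q}^k - Q_k \bs{\alpha}^k$ together with the orthogonality $Q_k^\top \barpp{\bs{q}}{k} = 0$ to write $\lVert \barpp{\bs{q}}{k}\rVert^2 = \lVert \bs{q}^k \rVert^2 - (\bs{\alpha}^k)^\top Q_k^\top \bs{q}^k$; dividing by $n$ and passing to the limit via (c-2), (c-3), and (c-5) produces the Schur complement $R^{k+1}_{k+1,k+1} - (R^{k+1}_{[k], k+1})^\top (R^k)^{-1} R^{k+1}_{[k], k+1}$, which is precisely $\varnew_{k+1}$.

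Item (c-7) is the main obstacle because $\partial_1 h_j$ is not assumed Lipschitz; \ref{ass:A3} only implies that it is bounded, and \ref{ass:A5} only ensures that it is continuous $\lambda \otimes \mathbb{P}^{\bs{\bar b}}$-almost everywhere, so pseudo-Lipschitz machinery does not apply. My plan is to use the weak-convergence content of $\mathfrak{H}_k$ together with the Portmanteau theorem. Since $Z_j$ is Gaussian, the law $\mathcal{L}(Z_j, \bs{\bar b})$ is absolutely continuous with respect to $\lambda \otimes \mathbb{P}^{\bs{\bar b}}$, hence $\partial_1 h_j$ is continuous $\mathcal{L}(Z_j, \bs{\bar b})$-almost everywhere. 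The Portmanteau theorem applied to the bounded, almost-everywhere continuous function $\partial_1 h_j$ then gives $d_j \to \mathbb{E}[\partial_1 h_j(Z_j, \bs{\bar b})]$. To upgrade this to complete convergence, I would exploit the transfer-through-equality-in-law property of complete convergence stated in the preliminaries together with the fact that $\mathfrak{H}_k$ is itself a complete convergence statement.
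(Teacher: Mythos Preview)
Your proposal is correct and matches the paper's approach: the paper does not actually prove this proposition but declares items (c-1)--(c-7) to be ``direct consequences of the induction hypothesis'' and defers the details to \cite{feng2021unifying}, and your plan (PL$_2$ test functions for (c-1)--(c-4), continuity of matrix inversion at the invertible limit $R^k$ for (c-5)--(c-6), Portmanteau for the bounded a.e.\ continuous $\partial_1 h_j$ in (c-7)) is exactly the standard route taken there. One small sharpening for (c-7): the upgrade to complete convergence follows not from the transfer-through-equality-in-law property but from the fact that $\nu \mapsto \int \partial_1 h_j\, d\nu$ is continuous at the limit measure in the $W_r$ metric, so that for every $\varepsilon>0$ there is $\eta>0$ with $\{|d_j - \bar d_j| > \varepsilon\} \subset \{W_r(\mu_n,\mu) \geq \eta\}$, and summability is inherited.
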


Using this proposition, we can already see that from \ref{prop:HRC1} and \ref{prop:HRC5} we have $\frac{1}{\sqrt{n}}\lVert \Delta^{(1)}\rVert\cconverge 0$ and from \ref{prop:HRC2} we have $\frac{1}{\sqrt{n}}\lVert \Delta^{(3)}\rVert\cconverge 0$.
The quantities $\Delta^{(4)}$ and $\Delta^{(5)}$ are small rank projections of some gaussian vectors and thus their normalized norms converge to $0$. It remains to show that the term 
\begin{equation}
\label{eq:stein}
     \Delta^{(2)}=Q_k  \left(\frac{1}{n}Q_k^\top Q_k\right)^{\dag} \left(\frac{1}{n}\bs{v}^{k,k} 
- \frac{1}{n}\sum_{\ell=1}^k \alpha_\ell^k \bs{v}^{k,\ell-1}\right),
\end{equation}
has a normalized norm that converges to $0$. This can be achieved by showing that
\begin{equation*}
    \frac{1}{n}\bs{v}^{k,\ell} \cconverge 0.
\end{equation*}
The $j$-th row of $\frac{1}{n} \bs{v}^{k,\ell}$ can be written as:
        $$ \frac{1}{n} v^{k,\ell}_j = \frac{1}{n}\langle \bs{u}^{j}, \bs{q}^\ell \rangle - \frac{1}{n} d_\ell \langle \bs{q}^{j-1}, \bs{q}^{\ell-1} \rangle , $$
where $\langle u,v\rangle = u^\top v$.  By Proposition~\ref{prop:consequences} we have:
\begin{itemize}
            \item $d_\ell \cconverge \Bar{d}_\ell$,
            \item $\frac{1}{n} \langle \bs{q}^{j-1}, \bs{q}^{\ell-1} \rangle \cconverge  R^{k}_{j,\ell}$,
            \item $\frac{1}{n} \langle \bs{u}^{j}, \bs{q}^\ell \rangle= \frac{1}{n} \sum_{i=1}^n u_i^j h_\ell(u_i^\ell) \cconverge \mathbb{E}\left(Z_j h_{\ell}(Z_\ell)\right)$.
        \end{itemize}
Using Stein's integration by parts formula and the density evolution equations we get:
        \begin{equation*}
            \begin{split}
                \mathbb{E}\left(Z_j h_{\ell}(Z_\ell)\right) &= \mathbb{E}\left(Z_j Z_\ell\right)\mathbb{E}\left(h'_\ell(Z_\ell)\right) \\
                &= \mathbb{E}\left(h_{j-1}(Z_{j-1}) h_{\ell-1}(Z_{\ell-1})\right)\Bar{d}_\ell\\
                &= \Bar{d}_\ell R^{k}_{j,\ell}.
            \end{split}
        \end{equation*}
Which leads us to the desired result $\frac{1}{n}\bs{v}^{k,\ell}\cconverge 0.$\\
This completes the proof of Theorem~\ref{th:main}.

\subsection{Proof of Corollary \ref{cor:partialAMP}: blockwise convergence of AMP}
\label{subsec:partialAMPProof}
Assume \ref{ass:A1}, \ref{ass:A2prime} and \ref{ass:A3}  - \ref{ass:A5}. To prove Corollary~\ref{cor:partialAMP} for a parameter matrix $B$ of size $n\times p$ we will have to use Theorem~\ref{th:main} for an augmented parameter matrix $B^\prime = [B \ | \ \bs{s}]$ of size $n\times (p+1)$, where $\bs{s}$ represents a ``selector vector". This explains the utility of presenting our main Theorem~\ref{th:main} using multiple parameter vector $(\bs{b}^1,\cdots, \bs{b}^p)$ in \eqref{eq:multipleVector} instead of a single one as what we usually see in the literature.

\noindent Recall the partition defined in \eqref{eq:partition}, and let $\bs{s}\in\mathbb{R}^n$ be a blockwise constant vector with $q$ different values $\tilde{s}_1,\cdots,\tilde{s}_q$ such that
\begin{equation}
\label{eq:selector}
    s_i = \tilde{s}_j \quad \text{if and only if } \quad i\in C_n^{(j)} \quad \text{for} \ i\in [n] \ \text{and} \ j\in [q].
\end{equation}
Put $B^\prime = [\bs{s} \ | \ B]$ and re-write the AMP iteration defined in \eqref{eq:AMP_elliptic} as 
\begin{equation*}
    \bs{u}^{k+1} = A h_k\left(\bs{u}^k, B^\prime\right) - \rho\left\langle \partial_1 h_k\left(\bs{u}^k,B^\prime\right) \right\rangle_n h_{k-1}\left(\bs{u}^{k-1},B^\prime\right),
\end{equation*}
where we abuse the notation for $h_k$ which will depend only on the first $p+1$ coordinates. In this setting, the Density Evolution equations \eqref{eq:DE-updated} remain unchanged. To use Theorem~\ref{th:main} we should verify that the parameter matrix $B^\prime$ satisfies Assumption~\ref{ass:A2}.
\begin{lemma}
\label{lem:assSatisfied}
    Assume that $(\bs{u}^{0},B)$ satisfies Assumption~\ref{ass:A2prime} and let $\bs{s}$ be defined by \eqref{eq:selector}. Then
         $(\bs{u}^0,B^\prime)$ satisfies Assumption~\ref{ass:A2}, i.e. there exists a vector $(\bar{u},\bar{b}_1,\cdots, \bar{b}_p,\bar{s})$ whose distribution belongs to $\mathcal{P}_2\left(\mathbb{R}^{p+2}\right)$ such that
    \begin{equation*}
        \mu^{\bs{u}^0,B^\prime} = \mu^{\bs{u}^0,\bs{b}^1,\cdots,\bs{b}^p,\bs{s}}\xrightarrow[n\to\infty]{\mathcal{P}_r\left(\mathbb{R}^{p+2}\right)} \mathcal{L}\left((\bar{u},\bar{b}_1,\cdots, \bar{b}_p,\bar{s})\right).
    \end{equation*}
    In this case $\mathcal{L}\left(\bar{u},\bar{b}_1,\cdots,\bar{b}_p,\bar{s}\right) = \sum_{j=1}^q c_j \mathcal{L}\left(\bar{u}_j,\bar{b}_{j,1},\cdots,\bar{b}_{j,p}\right) \otimes \delta_{\tilde{s}_j}$, and in particular
    \begin{equation*}
        \bar{s}\sim \sum_{j=1}^q c_j \delta_{\tilde{s}_j} \quad \bar{u}\sim \sum_{j=1}^q c_j \mathcal{L}(\bar{u}_j), \quad \text{and} \quad \bar{b}_\ell\sim \sum_{j=1}^q c_j \mathcal{L}(\bar{b}_{j,\ell})\ \text{for all } \ell \in [p].
    \end{equation*}
    
\end{lemma}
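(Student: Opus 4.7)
The plan is to decompose $\mu^{\bs{u}^0,B'}$ blockwise and then test against pseudo-Lipschitz functions, using the characterization of Wasserstein convergence in Lemma~\ref{lem:wasserstein}.

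The starting point is the identity
\begin{equation*}
\mu^{\bs{u}^0,B'} \ =\ \sum_{j=1}^q \frac{|C_n^{(j)}|}{n}\, \mu^{\bs{u}^{0,(j)},B^{(j)}}\otimes \delta_{\tilde{s}_j}\,,
\end{equation*}
where $\bs{u}^{0,(j)}$ and $B^{(j)}$ denote the restrictions of $\bs{u}^0$ and $B$ to the block $C_n^{(j)}$; this is immediate from \eqref{eq:selector} since $\bs{s}$ is constant on each block. By Lemma~\ref{lem:wasserstein}, it suffices to prove that, for every $\varphi\in PL_r(\mathbb{R}^{p+2})$,
\begin{equation*}
\int \varphi\, d\mu^{\bs{u}^0,B'} \ \cconverge\ \sum_{j=1}^q c_j\, \mathbb{E}\left[\varphi\left(\bar u_j,\bar b_{j,1},\ldots,\bar b_{j,p},\tilde s_j\right)\right]\,.
\end{equation*}

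Next, I would fix such a $\varphi$ and set $\varphi_j(\bs{x}) \triangleq \varphi(\bs{x},\tilde s_j)$. A direct application of the pseudo-Lipschitz inequality for $\varphi$, combined with $\|(\bs{x},\tilde s_j)\|\le \|\bs{x}\|+|\tilde s_j|$, shows that each $\varphi_j$ belongs to $PL_r(\mathbb{R}^{p+1})$ with constants depending only on $\varphi$ and the finitely many $\tilde s_j$. Applying Lemma~\ref{lem:wasserstein} to the blockwise convergence in \ref{ass:A2prime} then yields, for each fixed $j\in [q]$,
\begin{equation*}
\int \varphi_j\, d\mu^{\bs{u}^{0,(j)},B^{(j)}} \ \cconverge\ \mathbb{E}\left[\varphi\left(\bar u_j,\bar b_{j,1},\ldots,\bar b_{j,p},\tilde s_j\right)\right]\,.
\end{equation*}
Combined with $|C_n^{(j)}|/n\to c_j$ and the stability of complete convergence under finite linear combinations with converging deterministic coefficients (a direct consequence of the Borel--Cantelli characterization recalled in Section~\ref{sec:AMP}), this produces the required complete convergence of $\int\varphi\, d\mu^{\bs{u}^0,B'}$, and hence, by the converse direction of Lemma~\ref{lem:wasserstein}, the Wasserstein convergence with limit
\begin{equation*}
\mu^\star \ \triangleq\ \sum_{j=1}^q c_j\, \mathcal{L}(\bar u_j,\bar b_{j,1},\ldots,\bar b_{j,p})\otimes \delta_{\tilde s_j}\,.
\end{equation*}

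The announced marginal formulas then follow by reading off $\mu^\star$: testing against test functions depending only on the last coordinate gives $\bar s\sim \sum_j c_j\delta_{\tilde s_j}$, and analogously for $\bar u$ and each $\bar b_\ell$. There is no substantive obstacle here; the whole argument is essentially bookkeeping around the blockwise decomposition, the one subtlety being to ensure that \emph{complete} convergence (rather than convergence in probability or almost surely) is preserved when aggregating a finite number of blocks with converging weights, which the Borel--Cantelli reformulation makes transparent. The integrability assumption built into \ref{ass:A2prime} (i.e.\ the $\mathcal{P}_r$-convergence within each block) is what allows the $\|\bs{x}\|^r$ test function itself to be handled.
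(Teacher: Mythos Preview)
Your proof is correct and follows essentially the same approach as the paper: both decompose the empirical measure blockwise, freeze the selector coordinate to $\tilde s_j$ on each block, apply \ref{ass:A2prime} to the resulting pseudo-Lipschitz test function $\varphi(\cdot,\tilde s_j)$, and combine via the weights $|C_n^{(j)}|/n\to c_j$. Your write-up is slightly more explicit about why $\varphi_j\in PL_r$ and why complete convergence survives finite aggregation, but the argument is the same.
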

\begin{proof}
    Let $\Upsilon$ be a pseudo-Lipschitz test function, we want to show the existence of $\left(\bar{u},\bar{s},\bar{b}_1,\cdots,\bar{b}_p\right)$ such that 
    \begin{equation*}
        \frac{1}{n}\sum_{i=1}^n \Upsilon\left(u_i^0,b_i^1,\cdots,b_i^p,s_i\right) \cconverge \mathbb{E}\left[\Upsilon\left(\bar{u},\bar{b}_1,\cdots,\bar{b}_p,\bar{s}\right)\right].
    \end{equation*}
The previous sum can be expressed with respect to the partition \eqref{eq:partition} as
\begin{equation*}
\begin{split}
     \frac{1}{n}\sum_{i=1}^n \Upsilon\left(u_i^0,b_i^1,\cdots,b_i^p,s_i\right) &= \sum_{j=1}^q \frac{n_j}{n} \frac{1}{n_j}\sum_{i\in C_n^{(j)}} \Upsilon\left(u_i^0,b_i^1,\cdots,b_i^p,s_i\right)\\
     &= \sum_{j=1}^q \frac{n_j}{n} \frac{1}{n_j}\sum_{i\in C_n^{(j)}} \Upsilon\left(u_i^0,b_i^1,\cdots,b_i^p,\tilde{s}_j\right).\\
\end{split}
\end{equation*}
Thus using Assumption~\ref{ass:A2prime} with the test function $\Upsilon\left(\cdots,\tilde{s}_j\right)$ yields
\begin{equation*}
    \frac{1}{n}\sum_{i=1}^n \Upsilon\left(u_i^0,b_i^1,\cdots,b_i^p,s_i\right)\cconverge \sum_{j=1}^q c_j \mathbb{E}\left[\Upsilon\left(\bar{u}_j,\bar{b}_{j,1},\cdots,\bar{b}_{j,p},\tilde{s}_j\right)\right]\,.
\end{equation*}
Let $\left(\bar{u},\bar{b}_1,\cdots,\bar{b}_p,\bar{s}\right)\sim \sum_{j=1}^q c_j \mathcal{L}\left(\bar{u}_j,\bar{b}_{j,1},\cdots,\bar{b}_{j,p}\right) \otimes \delta_{\tilde{s}_j} $, we have 
\begin{equation*}
    \sum_{j=1}^q c_j \mathbb{E}\left[\Upsilon\left(\bar{u}_j,\bar{b}_{j,1},\cdots,\bar{b}_{j,p},\tilde{s}_j\right)\right] = \mathbb{E}\left[\Upsilon\left(\bar{u},\bar{b}_1,\cdots,\bar{b}_p,\bar{s}\right)\right],
\end{equation*}
    hence the result.
\end{proof}

We can now apply Theorem~\ref{th:main} which gives the following convergence result

\begin{equation}
\label{eq:convBprime}
    \mu^{B^\prime,\bs{u}^1,\cdots, \bs{u}^k} \xrightarrow[n\to\infty]{\mathcal{P}_r\left(\mathbb{R}^{p+k+1}\right)} \mathcal{L}\left(\left(B^\prime,Z_1,\cdots, Z_k\right)\right) \quad \text{(completely)}
\end{equation}

Fix $j$ to be an integer in $[q]$, let $\varphi$ be any pseudo-Lipschitz function on $\mathbb{R}^{p+k}$ and let $\psi$ be a continuous bounded function defined on $\mathbb{R}$ such that:
\begin{equation}
\label{eq:psi}
    \psi(\tilde{s}_\ell) = \begin{cases}
        1 \quad \text{if } \ell = j, \\
        0 \quad \text{otherwise}.
    \end{cases}
\end{equation}
Finally, consider the test function $\phi\in PL_{r}\left(\mathbb{R}^{p+k+1}\right)$ defined as 
\begin{equation*}
    \phi\left(x,\bs{y}\right) = \psi(x)\varphi(\bs{y}) \quad \forall (x,\bs{y})\in \mathbb{R}\times\mathbb{R}^{p+k},
\end{equation*}
and apply the result in \eqref{eq:convBprime} to get 
\begin{equation}
\label{eq:convCons}
    \frac{1}{n}\sum_{i=1}^{n}\phi\left(\tilde{s}_i, b^1_i,\cdots,b^p_i,u^1_i,\cdots,u^k_i\right)\cconverge \mathbb{E}\left[\phi\left(\bar{s},\bar{b}_1,\cdots,\bar{b}_p,Z_1,\cdots,Z_k\right)\right],
\end{equation}
where $\left(\bar{s},\bar{b}_1,\cdots,\bar{b}_p\right)$ is defined as in Lemma~\ref{lem:assSatisfied} and $\left(Z_1,\cdots,Z_k\right)$ is an independent gaussian vector that satisfies the Density evolution equations \eqref{eq:DE-updated} that depend only on $\left(\bar{u},\bar{b}_1,\cdots, \bar{b}_p\right)$.
The structure of $\psi$ in \eqref{eq:psi} implies that the left hand side of \eqref{eq:convCons} can be expressed as  
\begin{equation*}
    \frac{1}{n}\sum_{i=1}^{n}\phi\left(\tilde{s}_i, b^1_i,\cdots,b^p_i,u^1_i,\cdots,u^k_i\right) = \frac{n_j}{n} \frac{1}{n_j} \sum_{i\in C_n^{(j)}}\varphi\left(b^1_i,\cdots,b^p_i,u^1_i,\cdots,u^k_i\right)
\end{equation*}
thus 
\begin{equation*}
    \frac{1}{n_j}\sum_{i\in C_n^{(j)}}\varphi\left(b^1_i,\cdots,b^p_i,u^1_i,\cdots,u^k_i\right)\cconverge \frac{1}{c_j}\mathbb{E}\left[\phi\left(\bar{s},\bar{b}_1,\cdots,\bar{b}_p,Z_1,\cdots,Z_k\right)\right],
\end{equation*}
Now recall the law of the vector $(\bar{s},\bar{\bs{b}})=\left(\bar{s},\bar{b}_1,\cdots,\bar{b}_p\right)$ which is independent of $\bs{Z}=\left(Z_1,\cdots,Z_k\right)$, we have
\begin{equation*}
\begin{split}
        \mathbb{E}\left[\phi\left(\bar{s},\bar{b}_1,\cdots,\bar{b}_p,Z_1,\cdots,Z_k\right)\right] &= \mathbb{E}_{(\bar{s},\bar{\bs{b}})}\left[\mathbb{E}_{\bs{Z}}\left[\phi\left(\bar{s},\bar{b}_1,\cdots,\bar{b}_p,Z_1,\cdots,Z_k\right)\right]\right]\\
        &= \sum_{\ell=1}^q c_\ell \mathbb{E}_{(\bs{\bar{b}},\bs{Z})}\left[\phi\left(\tilde{s}_\ell,\bar{b}_{\ell,1},\cdots,\bar{b}_{\ell,p},Z_1,\cdots,Z_k\right)\right]\\
        &= c_j \mathbb{E}_{(\bs{\bar{b}},\bs{Z})}\left[\varphi\left(\bar{b}_{j,1},\cdots,\bar{b}_{j,p},Z_1,\cdots,Z_k\right)\right]. \\
\end{split}
\end{equation*}
Finally, we get
\begin{equation*}
    \frac{1}{n_j}\sum_{i\in C_n^{(j)}}\varphi\left(b^1_i,\cdots,b^p_i,u^1_i,\cdots,u^k_i\right) \cconverge \mathbb{E}_{(\bs{\bar{b}},\bs{Z})}\left[\varphi\left(\bar{b}_{j,1},\cdots,\bar{b}_{j,p},Z_1,\cdots,Z_k\right)\right]\,,
\end{equation*}
which ends the proof.

\section{Remaining proofs of Section \ref{sec:ecology}}
\label{sec:proofs-ecology}

\subsection{Proof of Theorem \ref{th:main-LV}:  AMP algorithm to describe the LV equilibrium's statistics}
\label{subsec:AMP-for-LV}
Notice that the existence of an equilibrium $\bs{x}_n^\star$ is granted by Proposition \ref{prop:existence-equilibrium} under the condition $\kappa\ge \sqrt{2(1+\rho)}$. The rest of the proof follows very closely \cite[Section 3.3]{akjouj2023equilibria} with Theorems \ref{th:main} and \ref{th:main-LV} to handle the elliptic case. We shall often drop subscript $n$ to lighten the notations.

\subsubsection*{Related Linear Complementarity Problem}
For a LV system, it is well-known that the equilibrium satisfies a non-linear optimization problem called Linear Complementarity Problem (LCP), see \cite{takeuchi1996global}. The LCP problem $\LCP\left(I-\Sigma, -\bs{r}\right)$ with parameters matrices $I,\Sigma\in \mathbb{R}^{n\times n}$ and vector $\bs{r}\in \mathbb{R}^n$ consists in finding a vector $\bs{x}^\star$ satisfying
\begin{equation}\label{eq:LCP-first-sight}
\left\{ 
\begin{array}{lcl}
x^\star_i &\ge& 0\,,\\
x^\star_i \left( r_i - \left[(I-\Sigma)\bs{x}^\star\right]_i\right)&=&0\,,\\
r_i - \left[(I-\Sigma)\bs{x}^\star\right]_i &\le& 0\,,
\end{array}\right.
\qquad \textrm{for}\ i\in [n]\,.
\end{equation}
If such a vector exists, we write $\bs{x}^\star\in \LCP\left(I-\Sigma, -\bs{r}\right)$. The first condition follows from the fact that $\bs{x}_n(t)$ is always (component-wise) positive for a LV system, the second condition simply express the nullity of the derivative at equilibrium. The last condition is a necessary condition (see for instance \cite[Th. 3.2.5]{takeuchi1996global}) for Lyapunov stability and has also an ecological interpretation of non-invasibility (see \cite[Section 3(a)]{akjouj2022complex}). In \cite[Prop. 4]{akjouj2023equilibria}, it is proved that the solution of a $\LCP\left(I-\Sigma, -\bs{r}\right)$ equivalently satisfies a fixed point equation in the sense that:
\begin{equation}\label{eq:LCP-equiv}
\bs{z} =\Sigma \bs{z}_+ +\bs{r}_n \quad \Leftrightarrow\quad \bs{z}_+ \in \LCP\left(I-\Sigma, -\bs{r}\right)\, .
\end{equation}
Otherwise stated, in case of uniqueness, $\bs{z}_+=\bs{x}^\star$.

\subsubsection*{An AMP algorithm} Let $(\delta, \sigma, \gamma)$ be the solution of System \eqref{eq:sys}. 
Define the activation function $h_k$ by:
$$
h_k(u,a)=\frac{(u+a)_+}{\delta} \quad \textrm{for}\quad k\ge 0\qquad \textrm{with} \qquad \partial_1 h_k (u,a) = \frac{\bs{1}_{(u+a>0)}}{\delta}\, ,
$$
and consider the following AMP algorithm 
\begin{equation}
    \label{eq:AMP-ecology}
    \bs{u}^{k+1} = \frac{A_n}{\delta}\left(\bs{u}^k+\bs{a}\right)_{+} - \rho\frac{\langle \bs{1}_{(\bs{u}^k+\bs{a}>0)} \rangle_n  \left(\bs{u}^{k-1}+\bs{a}\right)_+}{\delta^2}\ ,
\end{equation}    
where     
$$
    \begin{cases}
    \bs{u}^0&=\bs{1}_n\\
    \bs{a}&=\left(  1+\rho \frac{\gamma}{\delta^2}\right) \bs{r}=\frac \kappa \delta \bs{r}
    \end{cases}\ .
$$
Notice that by Assumption $\mu^{\bs{a}}\xrightarrow[n\to\infty]{\mathcal{P}_2\left(\RR\right)} \bar{a}$ where $\bar{a}=\left( 1+\rho \frac{\gamma}{\delta^2}\right)\bar{r}$. We can easily check that Assumptions\ref{ass:A1}-\ref{ass:A5} are satisfied and hence can apply Theorem \ref{th:main}. If one is only interested in the limiting law of $\mu^{\bs{u}^k}$, the DE equations write
\begin{equation}\label{eq:DE-simple-case}
\begin{cases}
    \theta_1^2 &= \frac 1{\delta^2} \mathbb{E}(1+\bar{a})_+^2    \\
    \theta_{k+1}^2&= \frac 1{\delta^2}\mathbb{E}(\theta_k \bar{Z} + \bar{a})_+^2
\end{cases}\ ,
\end{equation}
where $\bar{Z}\sim{\mathcal N}(0,1)$ is independent from $\bar{a}$. Theorem \ref{th:main} yields
$$
\mu^{\bs{u}^k} \xrightarrow[n\to\infty]{\mathcal{P}_2\left(\RR\right)} Z_k \sim{\mathcal N}(0,\theta_k^2)\, .
$$
Departing from \eqref{eq:AMP-ecology}, we establish a perturbed LCP with respect to \eqref{eq:LCP-equiv}. Denote 
$$
\bs{\xi}^k = \bs{u}^k +\bs{a}\qquad \textrm{and}
\qquad \gamma^k = \langle \bs{1}_{(\bs{u}^k+\bs{a}>0)} \rangle_n\, .
$$
Taking advantage of the definition of $\bs{a}$ and the relations between $\delta, \sigma$ and $\gamma$ from \eqref{eq:sys}, easy (but lengthy) computations yield
$$
\bs{\xi}_+^k - \frac{\bs{\xi}_-^k}{1+\rho \frac{\gamma}{\delta^2}} = \Sigma\bs{\xi}^k_+ +\bs{r} 
+\frac{\bs{\varepsilon}^k}{1+\rho \frac{\gamma}{\delta^2}}\ ,
$$
where 
$$
\bs{\varepsilon}^k = \frac{\rho \gamma}{\delta^2} \left( \bs{\xi}_+^k -  \bs{\xi}_+^{k-1}\right)
+ \frac{\rho}{\delta^2}( \gamma - \gamma^k) \bs{\xi}_+^{k-1} + \left( \bs{\xi}^k - \bs{\xi}^{k+1}\right)\, .
$$
Defining $\bs{z}^k = \bs{\xi}_+^k - \frac{\bs{\xi}_-^k}{1+\rho \frac{\gamma}{\delta^2}}$ and 
${\bs{\tilde \varepsilon}^k}= \frac{\bs{\varepsilon}^k}{1+\rho \frac{\gamma}{\delta^2}}$, we remark that $\bs{z}^k_+=\bs{\xi}_+^k$ and end up with the fixed-point equation
$
\bs{z}^k =\Sigma_n \bs{z}^k_+ + \bs{r} +\bs{\tilde \varepsilon}^k
$.
Otherwise stated 
\begin{equation}\label{eq:LCP-approx}
\bs{z}_+^k \in \LCP (I-\Sigma, - \bs{r} - \bs{\tilde \varepsilon}^k)\, .
\end{equation}
We first focus on the asymptotic distribution of $\mu^{\bs{z}_+^k}$. Setting $$\sigma_k = \frac{\delta}{\kappa} \theta_k$$ and noticing that function $(u+a)_+$ is Lipschitz, we obtain by Theorem \ref{th:main} that
$$
\mu^{\bs{z}_+^k}\quad \xrightarrow[n\to\infty]{\mathcal{P}_2\left(\RR\right)} \quad {\mathcal L}\left( \left( 1+\frac{\rho\gamma}{\delta^2}\right) (\sigma_k \bar{Z} +\bar{r})_+\right)\, .
$$
Replacing $\theta_k$ by $\sigma_k$ in the DE equations yields the equation
$$
\sigma_{k+1}^2 =\frac 1{\delta^2} \mathbb{E} (\sigma_k \bar{Z} +\bar{r})_+^2 
$$
which by \cite[Lemma 2]{akjouj2023equilibria} yields that $\sigma_k \xrightarrow[k\to\infty]{} \sigma$, the solution of \eqref{eq:sys-sigma}. Hence
\begin{equation}\label{eq:distribution-limite-LCP}
{\mathcal L}\left( \left( 1+\frac{\rho\gamma}{\delta^2}\right) (\sigma_k \bar{Z} +\bar{r})_+\right)
\quad \xrightarrow[k\to\infty]{\mathcal{P}_2\left(\RR\right)}\quad 
{\mathcal L}\left( \left( 1+\frac{\rho\gamma}{\delta^2}\right) (\sigma \bar{Z} +\bar{r})_+\right)\, .
\end{equation}
The arguments to establish convergence \eqref{cvg-muN} in Theorem \ref{th:main-LV} from \eqref{eq:LCP-equiv}, \eqref{eq:LCP-approx} and \eqref{eq:distribution-limite-LCP} follow exactly those in \cite[Section 3.4]{akjouj2023equilibria} and are thus omitted.

\subsection{Proof of Corollary \ref{coro:chaos-prop-I}}
\label{subsec:proof-chaos-prop-I}
We rely on the following result, see Sznitman \cite{sznitman89}:

\begin{proposition}{Chaos propagation, \cite{sznitman89}}
\label{prop:sznitman}
    Let $\left(X_1,\cdots,X_n\right)$ be a random vector of law $P_n$ and let $\mu_n$ be its empirical measure $\mu_n = \frac 1n \sum_{i=1}^n\delta_{X_i}\in\mathcal{P}(\mathbb{R})$. Assume the following
    \begin{enumerate}[label=(i)]
    \item\label{ass-chaos-I:(i)}There exists an probability measure $\mu\in\mathcal{P}(\mathbb{R})$, such that the random probability measure $\mu_n$ converges to $\mu$ in law.
\end{enumerate}
\begin{enumerate}[label=(ii)]
    \item\label{ass-chaos-I:(ii)}The vector $\left(X_1,\cdots,X_n\right)$ is exchangeable, that is for each permutation $\sigma\in \mathcal{S}_n$:
            $$\left(X_{\sigma(1)},\cdots,X_{\sigma(n)}\right) \laweq \left(X_1,\cdots,X_n\right)\, .$$
\end{enumerate}

    Under these assumptions, the probability distribution $P_n$ is $\mu$-chaotic, that is for each fixed integer $K$ we have
    \begin{eqnarray*}
        \left(X_1,\cdots,X_K\right) \xrightarrow[n\to\infty]{\mathcal L} \mu^{\otimes K}.
    \end{eqnarray*}
    \end{proposition}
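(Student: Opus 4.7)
The plan is to reduce the target joint convergence to a uniform test against product functions, and then to use exchangeability to rewrite expectations over $K$ fixed indices as expectations of integrals against $\mu_n^{\otimes K}$, which ties the assertion directly to assumption \ref{ass-chaos-I:(i)}. Fix $K \geq 1$ and bounded continuous test functions $\varphi_1, \dots, \varphi_K : \mathbb{R} \to \mathbb{R}$. Since tensor products of $C_b(\mathbb{R})$-functions form a convergence-determining family on $\mathbb{R}^K$, it will suffice to prove
\begin{equation*}
\mathbb{E}\bigl[\varphi_1(X_1) \cdots \varphi_K(X_K)\bigr] \xrightarrow[n\to\infty]{} \prod_{i=1}^K \int_{\mathbb{R}} \varphi_i \, d\mu.
\end{equation*}

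The first step is to exploit exchangeability. For every $K$-tuple $(i_1, \dots, i_K)$ of pairwise distinct indices in $[n]$, assumption \ref{ass-chaos-I:(ii)} gives $\mathbb{E}[\varphi_1(X_{i_1}) \cdots \varphi_K(X_{i_K})] = \mathbb{E}[\varphi_1(X_1) \cdots \varphi_K(X_K)]$. Averaging over the $n(n-1)\cdots(n-K+1)$ such tuples and comparing with
\begin{equation*}
\int \varphi_1 \otimes \cdots \otimes \varphi_K \, d\mu_n^{\otimes K} = \frac{1}{n^K}\sum_{(i_1, \dots, i_K) \in [n]^K} \varphi_1(X_{i_1}) \cdots \varphi_K(X_{i_K}),
\end{equation*}
then accounting for the $O(n^{K-1})$ non-distinct tuples (each bounded by $\prod_i \|\varphi_i\|_\infty$), I will obtain the key symmetrization identity
\begin{equation*}
\mathbb{E}\bigl[\varphi_1(X_1) \cdots \varphi_K(X_K)\bigr] = \mathbb{E}\!\left[ \int \varphi_1 \otimes \cdots \otimes \varphi_K \, d\mu_n^{\otimes K} \right] + O(1/n).
\end{equation*}

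The second step passes to the limit through the functional $F : \mathcal{P}(\mathbb{R}) \to \mathbb{R}$ defined by $F(\nu) = \int \varphi_1 \otimes \cdots \otimes \varphi_K \, d\nu^{\otimes K}$. This functional is bounded by $\prod_i \|\varphi_i\|_\infty$ and is continuous for the weak topology on $\mathcal{P}(\mathbb{R})$, since weak convergence is stable under tensor products and integration against bounded continuous integrands is weakly continuous. Assumption \ref{ass-chaos-I:(i)}, combined with the continuous mapping theorem and bounded convergence (using that the limit $F(\mu)$ is deterministic), yields $\mathbb{E}[F(\mu_n)] \to F(\mu) = \prod_i \int \varphi_i \, d\mu$. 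Combining the two preceding displays completes the proof.

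The point that requires some care is not the probabilistic part — the symmetrization identity from exchangeability does essentially all the work — but the two classical functional-analytic facts underlying it: that tensor products of $C_b(\mathbb{R})$-functions form a convergence-determining class on $\mathbb{R}^K$ (which follows from a Stone--Weierstrass argument on compacts together with tightness, itself obtained from step 2 applied with $K=1$), and that $F$ is genuinely weakly continuous on $\mathcal{P}(\mathbb{R})$. I expect no further conceptual obstacle.
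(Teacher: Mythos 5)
Your proof is correct and follows essentially the same route as the paper: the paper cites \cite{sznitman89} for Proposition~\ref{prop:sznitman} itself but proves the blockwise generalization (Proposition~\ref{prop:sznitman_generalized}) in Appendix~\ref{app:Sznitman} via exactly your decomposition --- exchangeability reduces $\mathbb{E}[\varphi_1(X_1)\cdots\varphi_K(X_K)]$ to $\mathbb{E}\bigl[\int \varphi_1\otimes\cdots\otimes\varphi_K\,d\mu_n^{\otimes K}\bigr]$ up to an $O(1/n)$ diagonal correction, and assumption (i) together with the weak continuity of $\nu\mapsto\prod_i\int\varphi_i\,d\nu$ handles the limit. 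Your argument is the $q=1$ specialization of the paper's $A_n$--$B_n'$--$B_n''$--$C_n$ splitting, and the two classical facts you flag (tensor products of $C_b$ functions being convergence determining, and the weak continuity of the product functional) are standard and correctly invoked.
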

%\begin{proof}[Proof of Corollary~\ref{coro:chaos-prop-I}]
    Corollary~\ref{coro:chaos-prop-I} is a direct consequence of Proposition~\ref{prop:sznitman}, we only need to verify that $\bs{x}^\star = \left(x_1^\star,\cdots,x_n^\star\right)$ satisfies the two assumptions.

\subsubsection*{Proof of Assumption~\ref{ass-chaos-I:(i)}} By Theorem~\ref{th:main-LV} we have the following convergence
    \begin{equation*} 
\mu^{\bs{x}^\star} \ \xrightarrow[n\to\infty]{\mathcal{P}_2(\RR)} \
 \pi:=\mathcal{L}\left( \left( 1 + \rho \gamma/\delta^2 \right) 
  \left( \sigma \bar Z + \bar r \right)_+ \right) \quad (completely)\ ,
\end{equation*}
which implies the convergence in probability of $\mu^{\bs{x}^\star}$ and thus the convergence in law.

\subsubsection*{Proof of Assumption~\ref{ass-chaos-I:(ii)}} We now prove that $\bs{x}^\star$ is exchangeable. Let the permutation $\sigma\in {\mathcal S}_n$ be fixed and $P_{\sigma}\in \mathbb{R}^{n\times n}$ its associated permutation matrix. We introduce the set 
$$
{\mathcal E}(A)=\left\{ \frac{\| A\|}{\kappa}<1 \right\}\, .
$$
Suppose now that $\frac{\| A\|}{\kappa}<1$ then $\bs{x}^\star = \bs{x}^\star =\bs{z}_+$ where $\bs{z} = \Sigma \bs{z}_+ +\bs{r}$. 
The function 
$$
\bs{y}\mapsto \Sigma \bs{y}_+ +\bs{r}
$$ 
is Lipschitz with Lipschitz parameter $\| \Sigma\|<1$ hence $\bs{z}=\lim_p \bs{z}(p)$ where $\bs{z}(p)$ is defined by:
$$
\begin{cases}
    \ \bs{z}(0)=0\,,\\
    \ \bs{z}(p+1) = \Sigma \bs{z}_+(p) +\bs{r}\, .
\end{cases}
$$
Consider the following notations: 
$$
A^\sigma = P_\sigma^{-1} A P_{\sigma}\,,\quad \bs{y}^\sigma = P_\sigma \bs{y} \quad \textrm{for any}\ \bs{y}\in \mathbb{R}^n\, .
$$
At first, we consider the $\bs{z}(p)$'s regardless of the condition $\frac{\|A\|}{\kappa}<1$ and prove by induction that 
\begin{equation}
\label{eq:induction-exchangeability}
\forall\ p\ge 0\,,\quad {\mathcal L}(\bs{z}(p), A, \bs{r} ) = {\mathcal L}(\bs{z}^\sigma(p),A^\sigma, \bs{r}^\sigma )\,.
\end{equation}
Since $P_{\sigma}$ is orthogonal, the invariance property of elliptic matrices implies that $A^{\sigma} \laweq A$. 
For $p=1$, $\bs{z}(1)= \bs{r}$ and ${\mathcal L}(\bs{r}^\sigma, A^\sigma) ={\mathcal L}(\bs{r}, A)$ (recall that $\bs{r}\indep A$) hence the induction property. Now
$$
{\mathcal L}(\bs{z}(p+1), A, \bs{r}) \ =\  {\mathcal L}(A\bs{z}_+(p) +\bs{r}, A, \bs{r})\ \stackrel{(a)}=\  {\mathcal L}(A^\sigma \bs{z}^\sigma_+(p)+\bs{r}, A, \bs{r}^\sigma)\ =\ {\mathcal L}(\bs{z}^\sigma(p+1), A^\sigma, \bs{r}^\sigma)\,,
$$
where $(a)$ follows from the induction hypothesis. Eq.\eqref{eq:induction-exchangeability} is proved.

We can now transfer the exchangeability to $\bs{z}$ conditionnally on ${\mathcal E}(A)$. Notice that ${\mathcal E}(A)={\mathcal E}(A^\sigma)$ and take any bounded continuous test function $\Phi$, then
$$
\mathbb{E} \Phi( \bs{z}(p))\mathbf{1}_{{\mathcal E}(A)} = \mathbb{E} \Phi( \bs{z}^\sigma(p))\mathbf{1}_{{\mathcal E}(A^\sigma)}\, .
$$
Letting $p\to\infty$ yields 
\begin{equation}\label{eq:conditionnal-exchangeability}
{\mathcal L}(\bs{z} \mid {\mathcal E}(A)) = {\mathcal L}(\bs{z}^\sigma \mid {\mathcal E}(A))
\end{equation}
Now if $\frac{\| A\|}{\kappa}>1$ then $\bs{x}^\star=0$ and $P_\sigma \bs{x}^\star=0$. Combining this remark with \eqref{eq:conditionnal-exchangeability} finally yields that ${\mathcal L}(\bs{x}^\star) = {\mathcal L}(P_\sigma \bs{x}^\star)$. The exchangeability of $\bs{x}^\star$ is proved.

\subsection{Proof of Theorem~\ref{th:chaos-prop-II}} 
\label{subsec:proof-chaos-prop-II}
We follow the same strategy as in the proof of Corollary~\ref{coro:chaos-prop-I} except that we need the blockwise form of the AMP theorem (see Corollary~\ref{cor:partialAMP}) and a the generalized version of Proposition~\ref{prop:sznitman} stated hereafter. %For the first part of Theorem~\ref{th:chaos-prop-II} we follow the proof of Theorem~\ref{th:main-LV} (provided in section~\ref{subsec:AMP-for-LV}) except that instead of applying the AMP convergence result we use the block-wise version of it (Corollary~\ref{cor:partialAMP}). The second part of Theorem~\ref{th:chaos-prop-II} is the equivalent of Corollary~\ref{coro:chaos-prop-I}, so we provide the elements of proof below.

\begin{proposition}\label{prop:sznitman_generalized}
Consider the partition $\left\{C_n^{(j)}\right\}_{j\in[q]}$ defined by \eqref{eq:nj}-\eqref{eq:Cj}. Let $X=\left(X_1, \cdots, X_n\right)$ be a random vector of law $P_n$ and let $\mu_n^{(1)},\cdots, \mu_n^{(q)} \in \mathcal{P}\left(\mathbb{R}\right)$ be the empirical measures of the $q$ blocks of $X$ respectively, i.e.
\begin{eqnarray*}
    \mu_n^{(j)} \triangleq \frac{1}{n_j} \sum_{i\in C_n^{(j)}} \delta_{X_i} \quad \text{for all } j\in[q].
\end{eqnarray*}

Assume the following
\begin{enumerate}[label=(i)]
    \item\label{ass-chaos-II:(i)} There exist $q$ probability measures $\mu_1,\cdots, \mu_q \in \mathcal{P}\left(\mathbb{R}\right)$ such that the random vector $$\left(\mu_n^{(1)},\cdots, \mu_n^{(q)}\right)$$ converges in law to the vector $\bs{\mu}=\left(\mu_1,\cdots,\mu_q\right)$ in the product space $\mathcal{P}\left(\mathbb{R}\right)^q$.
\end{enumerate}

\begin{enumerate}[label=(ii)]
    \item\label{ass-chaos-II:(ii)}The vector $X=\left(X_1,\cdots,X_n\right)$ is blockwise exchangeable (see Definition~\ref{def:blockEx}), that is for each permutations $\bs{\sigma} = (\sigma_1,\cdots, \sigma_q)\in \mathcal{S}_{n_1}\times\cdots\times \mathcal{S}_{n_q}$ we have the following $X^{\bs{\sigma}} \laweq X$.
\end{enumerate}

Under these assumptions, the probability distribution $P_n$ is $\bs{\mu}$-chaotic, that is for each fixed $q$-uplet of integers $\left(k_1, \cdots, k_q\right)\in [n_1]\times\cdots,\times [n_q]$ we have 
\begin{equation*}
    X_{[k_1,\cdots, k_q]} \xrightarrow[n\to\infty]{\mathcal{L}} \prod_{j=1}^q \mu_j^{\otimes k_j},
\end{equation*}
where $X_{[k_1,\cdots,k_q]}$ is the $k_1+\cdots+k_q$-dimensional vector obtained by a concatenation the vectors $\left(X_{i}\right)_{i\in\mathcal{K}_n^{(1)}},\cdots,\left(X_{i}\right)_{i\in\mathcal{K}_n^{(q)}}$ such that $\mathcal{K}_n^{(j)}$ is the subset of the $k_j$ first elements of $C_n^{(j)}$.
\end{proposition}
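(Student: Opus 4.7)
The plan is to adapt Sznitman's classical argument for Proposition~\ref{prop:sznitman} by running it in parallel across the $q$ blocks. To prove convergence in law of $X_{[k_1,\ldots,k_q]}$ towards $\prod_{j=1}^q \mu_j^{\otimes k_j}$, it is enough to verify, for every choice of bounded continuous test functions $\varphi_j:\mathbb{R}^{k_j}\to\mathbb{R}$ with $j\in[q]$, that
\[
\mathbb{E}\!\left[\prod_{j=1}^q \varphi_j\!\left(X_i,\ i\in\mathcal{K}_n^{(j)}\right)\right]
\ \xrightarrow[n\to\infty]{}\ \prod_{j=1}^q \int \varphi_j\, d\mu_j^{\otimes k_j}\,.
\]
Such product functions form a convergence-determining class on $\mathbb{R}^{k_1+\cdots+k_q}$ (equivalently, one may argue via the multivariate L\'evy continuity theorem by taking the $\varphi_j$ to be complex exponentials), so this suffices.

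I would first use assumption~\ref{ass-chaos-II:(ii)} to symmetrize within each block. Since the joint law of $X$ is invariant under the product of symmetric groups $\mathcal{S}_{n_1}\times\cdots\times\mathcal{S}_{n_q}$, averaging the expectation above over all such permutations and using the factorization of the product over $j$ gives
\[
\mathbb{E}\!\left[\prod_{j=1}^q \varphi_j(X_{\mathcal{K}_n^{(j)}})\right]
= \mathbb{E}\!\left[\prod_{j=1}^q \frac{1}{(n_j)_{k_j}} \sum_{\substack{i_1,\ldots,i_{k_j}\in C_n^{(j)}\\ \text{distinct}}} \varphi_j(X_{i_1},\ldots,X_{i_{k_j}})\right],
\]
with $(n_j)_{k_j}=n_j(n_j-1)\cdots(n_j-k_j+1)$. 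The key elementary step is then to compare the distinct-index sum with its full counterpart defining the empirical moment
\[
\int \varphi_j\, d(\mu_n^{(j)})^{\otimes k_j} = \frac{1}{n_j^{k_j}}\sum_{i_1,\ldots,i_{k_j}\in C_n^{(j)}}\varphi_j(X_{i_1},\ldots,X_{i_{k_j}}).
\]
Since $\varphi_j$ is bounded and $(n_j)_{k_j}/n_j^{k_j}=1+O(1/n_j)$, both the prefactor mismatch and the contribution of the repeated-index ``diagonal'' terms are of order $O(1/n_j)$ uniformly. This yields
\[
\mathbb{E}\!\left[\prod_{j=1}^q \varphi_j(X_{\mathcal{K}_n^{(j)}})\right] = \mathbb{E}\!\left[\prod_{j=1}^q \int \varphi_j\, d(\mu_n^{(j)})^{\otimes k_j}\right] + o(1).
\]

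To conclude, I would apply the continuous mapping theorem. The functional $F:(\nu_1,\ldots,\nu_q)\mapsto \prod_{j=1}^q \int \varphi_j\, d\nu_j^{\otimes k_j}$ is bounded and continuous on $\mathcal{P}(\mathbb{R})^q$ equipped with the product of weak topologies, since each factor $\nu\mapsto \int \varphi_j\, d\nu^{\otimes k_j}$ is weakly continuous for $\varphi_j\in C_b$. Assumption~\ref{ass-chaos-II:(i)} combined with bounded convergence, together with the fact that the limit $(\mu_1,\ldots,\mu_q)$ is deterministic, then gives
\[
\mathbb{E}\bigl[F(\mu_n^{(1)},\ldots,\mu_n^{(q)})\bigr] \ \xrightarrow[n\to\infty]{}\ F(\mu_1,\ldots,\mu_q) = \prod_{j=1}^q \int \varphi_j\, d\mu_j^{\otimes k_j},
\]
which finishes the argument.

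The main obstacle is conceptual rather than technical: the symmetrization must factorize correctly over the $q$ blocks, which is exactly what the blockwise exchangeability assumption~\ref{ass-chaos-II:(ii)} is designed to ensure, and the \emph{joint} convergence of empirical measures in~\ref{ass-chaos-II:(i)}, rather than merely their marginal convergence, is indispensable to decouple the $q$ averages in the final continuous-mapping step. The only numerical estimate is the $O(1/n_j)$ diagonal bound, which is elementary once the product factorization has been achieved.
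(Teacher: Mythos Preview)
Your proof is correct and follows essentially the same route as the paper's: symmetrize within each block using assumption~\ref{ass-chaos-II:(ii)} to pass from the fixed-index expectation to an average over distinct-index tuples, replace the distinct-index sums by full empirical moments at an $O(1/n_j)$ cost, and conclude via the continuous mapping theorem applied to the bounded continuous functional $F(\nu_1,\ldots,\nu_q)=\prod_j\int\varphi_j\,d\nu_j^{\otimes k_j}$ together with assumption~\ref{ass-chaos-II:(i)}. The paper writes this out with the notation $A_n,B_n,C_n$ and restricts to tensor-product test functions within each block, but the structure of the argument is identical.
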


Proof of Proposition \ref{prop:sznitman_generalized} is postoned to Appendix \ref{app:Sznitman}. 

Theorem~\ref{th:chaos-prop-II} is a direct consequence of Proposition~\ref{prop:sznitman_generalized} and we only need to check that vector $\bs{x}^\star = \left(x_1^\star,\cdots, x_n^\star\right)$ satisfies assumptions \ref{ass-chaos-II:(i)} and \ref{ass-chaos-II:(ii)}. 

\subsubsection*{Proof of Assumption~\ref{ass-chaos-II:(i)}}
Consider the empirical measure of the coordinates of the $j$-th block of $\bs{x}^\star$
    \begin{equation*}
        \mu^{(j)}_n = \frac{1}{n_j} \sum_{i\in C_n^{(j)}}\delta_{x_i^\star}.
    \end{equation*}
    By blockwise AMP we already have 
     \begin{equation*}
        \mu^{(j)}_n \xrightarrow[n\to\infty]{\mathcal{P}_2\left(\mathbb{R}\right)} \pi_j \quad (completely)\,.
    \end{equation*}
    If we endow the space of probability measures with the following distance 
    \begin{equation*}
        \tilde{d}(\mu,\nu) :=  \sup_{\psi} \left|\int_{\mathbb{R}}\psi d\mu - \int_{\mathbb{R}}\psi d\nu\right|,
    \end{equation*}
    then we also have convergence in probability of the sequence of measures $\left(\mu_n^{(j)}\right)_n$ (considered as random variables living in the space of probability measures) to $\pi_j$, the underlying distance is $\tilde{d}$,
    \begin{equation*}
        \mu_n^{(j)} \stackrel{\mathbb{P}, \tilde{d}}{\xrightarrow[n \to +\infty]{}} \pi_j,
    \end{equation*}
    the convergence in probability of the components $\left(\mu^{(j)}\right)_{j\in [q]}$ implies the joint convergence in probability
    \begin{equation*}
        \left(\mu_n^{(1)},\cdots,\mu_n^{(q)}\right) \stackrel{\mathbb{P}, d}{\xrightarrow[n \to +\infty]{}} \left(\mu_1,\cdots,\mu_q\right)
    \end{equation*}
    where the underlying distance $d$ is a distance on the product of $q$ probability measure spaces.
    Finally, and in particular we have convergence in law.

   \subsubsection*{Proof of Assumption~\ref{ass-chaos-II:(ii)}} The proof that $\bs{x}^\star = \left(x_1^\star,\cdots, x_n^\star\right)$ is blockwise exchangeable  closely follows the lines of the proof of exchangeability presented in Section~\ref{subsec:proof-chaos-prop-I} and is thus omitted.

% \bibliographystyle{alpha} 
% \bibliography{biblio} 

\newcommand{\etalchar}[1]{$^{#1}$}

\begin{appendix}

\section{Proof of Lemma~\ref{lemma:main-system}}\label{app:proof-system}
Recall the assumptions of Lemma \ref{lemma:main-system} and system \eqref{eq:sys-delta}-\eqref{eq:sys-gamma}. 

Notice that \cite[Section 3.2]{akjouj2023equilibria} give the proof of existence and uniqueness of a solution $(\delta,\sigma, \gamma)$ in the case where $\rho=1$ and a careful reading indicates that the proof remains true for $\rho\ge 0$. We thus assume $\rho\in [-1,0)$ in the sequel. 

We remind a few facts from \cite[Lemma 2]{akjouj2023equilibria} which concern Eq. \eqref{eq:sys-sigma}-\eqref{eq:sys-gamma} and remain true for $\rho<0$:
\begin{itemize}
    \item[-] For every $\delta>1/\sqrt{2}$, the solution $\sigma(\delta)$ of \eqref{eq:sys-sigma} exists,
    \item[-] For every $\delta>1/\sqrt{2}$, inequality $\gamma(\delta) < \delta^2$ holds true,
    \item[-] Function  $\delta\mapsto \sigma(\delta)$ is a decreasing function such that $ \lim_{\delta\to (1/\sqrt{2})^+} \sigma(\delta)=+\infty$.
\end{itemize}
Given $\delta$ we obtain $\sigma(\delta)$ by solving \eqref{eq:sys-sigma} and $\gamma(\delta)$ by \eqref{eq:sys-gamma}. In order to find a solution $\delta$ which satisfies \eqref{eq:sys-delta}, we study function 
$$
h(\delta) = \delta +\rho\frac{\gamma(\delta)}{\delta} 
$$
and prove that there exists a unique $\delta^\star\in (1/\sqrt{2}, \infty)$ such as $h(\delta^\star)=\kappa$. The existence of $\delta^\star$ is easy to establish. In fact, function $\delta\mapsto h(\delta)$ is continuous,
$$\lim_{\delta\to\infty} h\left(\delta\right) = +\infty > \kappa\qquad \text{and} \qquad \lim_{\delta\to (1/\sqrt{2})^+} h(\delta) = \frac{1}{\sqrt{2}} + \rho \frac{1/2}{1/\sqrt{2}} = \frac{1+\rho}{\sqrt{2}}<\kappa\,,$$
which implies the existence of some $\delta^\star$ satisfying $h(\delta^\star)=\kappa$. Now in order to prove the uniqueness of $\delta^\star$, it suffices to prove that $h$ is strictly increasing. For $\rho <0$ and $\delta>1/\sqrt{2}$ we have 
\begin{equation}
        h^\prime(\delta) \quad =\quad  1 + \rho \frac{\gamma^\prime(\delta)}{\delta} - \rho \frac{\gamma(\delta)}{\delta^2}
        \quad \geq\quad  1+ \rho \frac{\gamma^\prime(\delta)}{\delta} 
         \quad \geq\quad  1- \frac{\gamma^\prime(\delta)}{\delta}\,.
\end{equation}
In order to prove that $h$ is increasing, it suffices to establish the following inequality:
\begin{lemma}
\label{lem:gammap}
Let $\delta \in (1/\sqrt{2},+\infty)$ then $\gamma^\prime\left(\delta\right) < \delta$.
\end{lemma}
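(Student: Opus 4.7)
The plan is to differentiate the identity \eqref{eq:sys-sigma} and the definition \eqref{eq:sys-gamma} implicitly with respect to $\delta$, then use Stein's integration by parts on the Gaussian $\bar{Z}$ to rewrite $\gamma'(\delta)$ as an explicit ratio in which the target inequality becomes transparent.

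Write $\sigma = \sigma(\delta)$, $T = \bar{r}/\sigma$, and let $\varphi$, $\Phi$ denote the standard normal density and cdf. Then
\[
\gamma(\delta) = \mathbb{P}(\sigma \bar{Z} + \bar{r} > 0) = \mathbb{E}[\Phi(T)],
\qquad
\frac{d\gamma}{d\sigma} = -\frac{1}{\sigma^{2}}\,\mathbb{E}[\bar{r}\,\varphi(T)] .
\]
Next, setting $g(\sigma) = \mathbb{E}[(\sigma \bar{Z} + \bar{r})_{+}^{2}]$, Stein's lemma applied conditionally on $\bar{r}$ gives $g'(\sigma) = 2\,\mathbb{E}[\bar{Z}(\sigma\bar{Z}+\bar{r})_{+}] = 2\sigma\,\gamma$. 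Differentiating the implicit equation $\delta^{2}\sigma^{2} = g(\sigma)$ and using the already recalled strict inequality $\gamma(\delta) < \delta^{2}$, the implicit function theorem yields
\[
\sigma'(\delta) = \frac{\delta\,\sigma}{\gamma - \delta^{2}} ,
\qquad
\gamma'(\delta) = \frac{d\gamma}{d\sigma}\,\sigma'(\delta) = \frac{\delta}{\sigma(\delta^{2}-\gamma)}\,\mathbb{E}[\bar{r}\,\varphi(T)] .
\]

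To conclude, use the elementary Gaussian identity $\mathbb{E}[(\bar{Z}+t)_{+}^{2}] = t\varphi(t) + (1+t^{2})\Phi(t)$, which applied conditionally on $\bar{r}$ together with \eqref{eq:sys-sigma} gives
\[
\delta^{2} = \mathbb{E}\!\left[T\varphi(T) + (1+T^{2})\Phi(T)\right],
\qquad \text{hence} \qquad
\delta^{2} - \gamma = \mathbb{E}[T\varphi(T)] + \mathbb{E}[T^{2}\Phi(T)] .
\]
Since $\sigma^{-1}\mathbb{E}[\bar{r}\,\varphi(T)] = \mathbb{E}[T\varphi(T)]$, substitution into the formula for $\gamma'(\delta)$ produces the clean ratio
\[
\gamma'(\delta) \;=\; \delta\,\cdot\,\frac{\mathbb{E}[T\varphi(T)]}{\mathbb{E}[T\varphi(T)] + \mathbb{E}[T^{2}\Phi(T)]} .
\]
Because $\bar{r} \geq 0$ gives $T \geq 0$, and $\mathcal{L}(\bar{r}) \neq \delta_{0}$ forces $\mathbb{P}(T>0)>0$, we have $\mathbb{E}[T^{2}\Phi(T)]>0$, so the ratio above is strictly less than $1$ and the desired bound $\gamma'(\delta) < \delta$ follows.

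The only mildly delicate step is the justification of $g'(\sigma) = 2\sigma\gamma$ via Stein, since $u\mapsto u_{+}$ is only weakly differentiable, but this is routine (approximate $(\,\cdot\,)_{+}$ by smooth functions, or integrate by parts directly on $\int(\sigma z+r)_{+}z\varphi(z)\,dz$). Everything else is an algebraic rearrangement; the only conceptual content is recognizing that the denominator $\delta^{2}-\gamma$ splits as the numerator of $\gamma'(\delta)/\delta$ plus a strictly positive remainder.
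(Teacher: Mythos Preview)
Your proof is correct and follows essentially the same route as the paper's: both differentiate \eqref{eq:sys-sigma} and \eqref{eq:sys-gamma} implicitly, express $\gamma'(\delta)/\delta$ as a ratio, and conclude by observing that the denominator exceeds the numerator by the strictly positive term $\mathbb{E}[T^{2}\Phi(T)]$ (the paper writes this as $\mathbb{E}[(\bar r x)^{2}Q(\bar r x)]$ via the substitution $x=-1/\sigma$, $Q=1-\Phi$). The only cosmetic difference is your parametrization by $T=\bar r/\sigma$ and the use of Stein's lemma to get $g'(\sigma)=2\sigma\gamma$, whereas the paper carries out the equivalent integration by parts directly; the resulting formulas are identical after the change of variable.
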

Before proceeding to the proof of Lemma \ref{lem:gammap}, we introduce auxiliary functions that will simplify the forthcoming computations. Let 
$$
    Q(x) = \int_{x}^{+\infty} \frac{e^{-t^2/2}}{\sqrt{2\pi}}dt\,,\qquad f(x) = \left(1+x^2\right)Q(x) - x\times \frac{e^{-x^2/2}}{\sqrt{2\pi}}\,,
$$
the derivative of which are given by:
$$
        Q'(x) = -  \frac{e^{-x^2/2}}{\sqrt{2\pi}}\,,\qquad f'(x) = 2\left(xQ(x)-  \frac{e^{-x^2/2}}{\sqrt{2\pi}}\right)\,.
$$
We consider the change of variable
$$
x(\delta)= - \frac 1{\sigma(\delta)} <0
$$
and rewrite the system of equations \eqref{eq:sys-delta}-\eqref{eq:sys-gamma} using $Q$, $f$ and $x(\delta)$:
\begin{eqnarray}
        \kappa &=& \delta + \rho \frac{\gamma\left(\delta\right)} \delta\,, \label{eq:t-delta}\\
        \delta^2 &=& \mathbb{E}\left[f(\bar{r}x\left(\delta\right))\right]\,, \label{eq:t-sigma}\\
        \gamma\left(\delta\right) &=& \mathbb{E}\left[Q(\bar{r}x\left(\delta\right))\right]\,, \label{eq:t-gamma}
\end{eqnarray}
with $(\delta,x,\gamma)\in (1/\sqrt{2},\infty) \times (-\infty, 0) \times (0,1)$. In order to obtain Eq.\eqref{eq:t-sigma}, we start from $\eqref{eq:sys-gamma}$ which is rewritten as 
\begin{eqnarray}
    \delta^2&=&\frac 1{\sigma^2} \mathbb{E} \int_{-\frac{\bar{r}}{\sigma}}^{\infty} (\sigma^2 t^2+2\sigma t\bar{r} + \bar{r}^2)\frac{e^{-\frac{t^2}2}}{\sqrt{2\pi}}\, dt \quad =\quad  \mathbb{E} \int_{x\bar{r}}^{\infty} (t^2-2t\bar{r}x + (x\bar{r})^2)\frac{e^{-\frac{t^2}2}}{\sqrt{2\pi}}\, dt\ ,\nonumber \\
    &=& \mathbb{E} (x\bar{r})^2 Q(x\bar{r}) + \mathbb{E} \int_{x\bar{r}}^{\infty} (t^2-2xt\bar{r})\frac{e^{-\frac{t^2}2}}{\sqrt{2\pi}}\, dt \,,\nonumber \\
    &=&  \mathbb{E} (1+(x\bar{r})^2) Q(x\bar{r}) - \mathbb{E} x\bar{r} \frac{e^{-\frac{(x\bar{r})^2}2}}{\sqrt{2\pi}}\ ,
\label{eq:delta-square}
\end{eqnarray}
where the last equality follows from an integration by parts.

With notations $Q$, $f$, $x(\delta)$ and equation \eqref{eq:t-gamma} at hand, notice that 
$\gamma$ depends on $\delta$ via $x$. The next lemma expresses the derivatives of $x$ and $\gamma$.
\begin{lemma}
\label{lem:derivX}
    Let $\delta > 1/\sqrt{2}$ then $x$ and $\gamma$'s derivatives with respect to $\delta$ write:
$$
         x^\prime\left(\delta\right) \ =\  \frac{\delta x\left(\delta\right)}{\delta^2 - \mathbb{E}\left[Q\left(\bar{r}x\left(\delta\right)\right)\right]}\,,\qquad 
    \gamma^\prime\left(\delta\right) \ =\ - \frac{\delta x\left(\delta\right)}{\delta^2 - \mathbb{E}\left[Q\left(\bar{r}x\left(\delta\right)\right)\right]}\times \frac{\mathbb{E}\left[\bar{r}e^{-\left(\bar{r}x\left(\delta\right)\right)^2/2}\right]}{\sqrt{2\pi}}\,.
$$
\end{lemma}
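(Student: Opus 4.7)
The strategy is to differentiate implicitly the two identities \eqref{eq:t-sigma} and \eqref{eq:t-gamma} with respect to $\delta$, using the explicit form of the derivatives $Q'(y) = - e^{-y^2/2}/\sqrt{2\pi}$ and $f'(y) = 2\bigl( y Q(y) - e^{-y^2/2}/\sqrt{2\pi} \bigr)$. The key will be to recognize the combination $\delta^{2} - \gamma(\delta)$ inside the expectation that appears after differentiation of \eqref{eq:t-sigma}.

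Concretely, I would first differentiate $\delta^{2} = \mathbb{E}[f(\bar r x(\delta))]$ through the chain rule, obtaining
\begin{equation*}
2\delta \;=\; x'(\delta)\, \mathbb{E}\!\left[\bar r\, f'(\bar r x(\delta))\right]
\;=\; 2 x'(\delta)\left( x(\delta)\, \mathbb{E}[\bar r^{2} Q(\bar r x(\delta))] - \mathbb{E}\!\left[\bar r\, \tfrac{e^{-(\bar r x(\delta))^{2}/2}}{\sqrt{2\pi}}\right]\right).
\end{equation*}
Then I would rewrite identity \eqref{eq:t-sigma} as
\begin{equation*}
\delta^{2} \;=\; \mathbb{E}[Q(\bar r x)] + x^{2}\, \mathbb{E}[\bar r^{2} Q(\bar r x)] - x\, \mathbb{E}\!\left[\bar r\, \tfrac{e^{-(\bar r x)^{2}/2}}{\sqrt{2\pi}}\right],
\end{equation*}
so that, recalling $\gamma(\delta) = \mathbb{E}[Q(\bar r x(\delta))]$, the bracketed quantity above is exactly $(\delta^{2} - \gamma(\delta))/x(\delta)$. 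Substituting back yields the announced formula for $x'(\delta)$. Note that the denominator $\delta^{2} - \gamma(\delta)$ is non-zero since $\gamma(\delta) < \delta^{2}$ for every $\delta > 1/\sqrt{2}$ (recalled just before the statement), so the expression is well-defined.

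For $\gamma'(\delta)$, I would simply differentiate \eqref{eq:t-gamma} through the chain rule:
\begin{equation*}
\gamma'(\delta) \;=\; x'(\delta)\, \mathbb{E}\!\left[\bar r\, Q'(\bar r x(\delta))\right] \;=\; - x'(\delta)\, \mathbb{E}\!\left[\bar r\, \tfrac{e^{-(\bar r x(\delta))^{2}/2}}{\sqrt{2\pi}}\right],
\end{equation*}
and plug in the previously obtained expression for $x'(\delta)$.

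The main (minor) obstacle is purely algebraic: recognizing that the combination appearing after differentiating \eqref{eq:t-sigma} matches, up to a factor $x(\delta)$, the combination $\delta^{2} - \gamma(\delta)$ that can be read off from \eqref{eq:t-sigma} itself. Beyond this rearrangement, one should briefly justify that the differentiation under the expectation sign is legitimate; this follows from the uniform boundedness of $Q$ by $1$, the subgaussian decay of $e^{-y^{2}/2}$, and the finite second moment assumption on $\bar r$, which yield integrable dominating functions on any compact subinterval of $(1/\sqrt{2},\infty)$ via standard arguments.
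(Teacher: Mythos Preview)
Your proposal is correct and follows essentially the same approach as the paper: both differentiate \eqref{eq:t-sigma} implicitly, expand $f'$, and then use the expanded form of \eqref{eq:t-sigma} (the paper's \eqref{eq:delta-square}) to simplify the denominator to $(\delta^2-\gamma(\delta))/x(\delta)$. Your presentation is in fact slightly more streamlined, and you additionally note the justification for differentiating under the expectation, which the paper leaves implicit.
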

Momentarily assuming Lemma \ref{lem:derivX}, we are now in position to prove Lemma \ref{lem:gammap}.
\begin{proof}[Proof of Lemma \ref{lem:gammap}]
For $\delta>\frac{1}{\sqrt{2}}$, using Lemma~\ref{lem:derivX} and Eq. \eqref{eq:delta-square} we get
\begin{equation*}
    \frac{\gamma^\prime(\delta)}{\delta} = \frac{1 - \mathbb{E}\left[\left(1+\left(\bar{r}x\left(\delta\right)\right)^2\right)\frac{Q\left(\bar{r}x\left(\delta\right)\right)}{\delta^2}\right]}{1-\mathbb{E}\left[\frac{Q\left(\bar{r}x\left(\delta\right)\right)}{\delta^2}\right]}.
\end{equation*}
The inequality $\gamma^\prime\left(\delta\right) <  \delta$ is equivalent to
\begin{equation*}
    1 - \mathbb{E}\left[\left(1+\left(\bar{r}x\left(\delta\right)\right)^2\right)\frac{Q\left(\bar{r}x\left(\delta\right)\right)}{\delta^2}\right] < 1-\mathbb{E}\left[\frac{Q\left(\bar{r}x\left(\delta\right)\right)}{\delta^2}\right]\qquad \Leftrightarrow\qquad 
    0 < \mathbb{E}\left[\left(\bar{r}x\left(\delta\right)\right)^2\frac{Q\left(\bar{r}x\left(\delta\right)\right)}{\delta^2}\right]\,,
\end{equation*}
the last inequality being true because $\mathcal{L}(\bar{r}) \neq \delta_0$.
\end{proof}
\begin{proof}[Proof of Lemma~\ref{lem:derivX}]
    By differentiating \eqref{eq:t-sigma} with respect to $\delta$ we get:
    \begin{equation*}
                   2 \delta = x^\prime\left(\delta\right) \mathbb{E}\left[\bar{r}f^\prime\left(\bar{r}x\left(\delta\right)\right)\right]\,,
    \end{equation*}
    from which we extract $x'(\delta)$. Using the explicit form of $f'$ yields:
    \begin{equation*}
            x^\prime\left(\delta\right) = \frac{\delta}{\mathbb{E}\left[\bar{r}^2 x\left(\delta\right)Q\left(\bar{r}x\left(\delta\right)\right)\right]-\mathbb{E}\left[\left(\bar{r}/\sqrt{2\pi}\right)e^{-\left(\bar{r}x\left(\delta\right)\right)^2/2}\right]}\,. 
    \end{equation*}
From \eqref{eq:t-sigma} and \eqref{eq:delta-square}, we get:
$$
     \mathbb{E}\left[\frac{\bar{r}e^{-\left(\bar{r}x \left(\delta\right)\right)^2 /2}}{\sqrt{2\pi}}\right] = \mathbb{E}\left[\left(\frac{1}{x\left(\delta\right)}+\bar{r}^2 x \left(\delta\right)\right)Q\left(\bar{r}x \left(\delta\right)\right)\right] - \frac{\delta^2}{x\left(\delta\right)},
$$
thus
\begin{equation*}
            x^\prime\left(\delta\right) \ =\ \frac{\delta}{\mathbb{E}\left[\bar{r}^2 x\left(\delta\right)Q\left(\bar{r}x\left(\delta\right)\right)\right] -\mathbb{E}\left[\left(\frac{1}{x\left(\delta\right)}+\bar{r}^2 x \left(\delta\right)\right)Q\left(\bar{r}x \left(\delta\right)\right)\right] + \frac{\delta^2}{x\left(\delta\right)}} \ =\  \frac{\delta x\left(\delta\right)}{\delta^2 - \mathbb{E}\left[Q\left( \bar{r}x\left(\delta\right)\right)\right]}\,.
\end{equation*}
\end{proof}
Proof of Lemma \ref{lemma:main-system} is completed.

\section{Generalized propagation of chaos }\label{app:Sznitman}
In this section we prove the generalized version of Sznitman's propagation of chaos result presented in section~\ref{subsec:proof-chaos-prop-II} (see Proposition~\ref{prop:sznitman_generalized}) to cover the case of blockwise structured random vectors. 

We provide the following proof of Proposition~\ref{prop:sznitman_generalized} that follows the same ideas of \cite{sznitman89}.

\subsection*{Proof of Proposition~\ref{prop:sznitman_generalized}}
    We want to prove that for any test functions $\varphi^{(1)},\cdots, \varphi^{(q)}$ where $\varphi^{(j)}$ is the tensor product of $k_j$ test functions $\varphi^{(j)}_{1},\cdots,\varphi^{(j)}_{k_j}$ we have the following limit
    \begin{equation}
    \label{eq:lawconv}\mathbb{E}\left[\varphi^{(1)}\left(X^{(1)}\right)\times\cdots\times\varphi^{(q)}\left(X^{(q)}\right)\right] \xrightarrow[n\to\infty]{} \prod_{i=1}^{k_1}\mathbb{E}\left[\varphi_i^{(1)}\left(\Tilde{X}_1\right)\right] \times \cdots \times \prod_{i=1}^{k_q}\mathbb{E}\left[\varphi_i^{(q)}\left(\Tilde{X}_q\right)\right],
    \end{equation}
    where $(\Tilde{X}_1, \cdots, \Tilde{X}_q)\in\mathbb{R}^q$ is a random vector having the law $\bs{\mu}$. The term $\varphi^{(j)}\left(X^{(j)}\right)$ is equal to the following product
    \begin{equation*}
        \varphi^{(j)}\left(X^{(j)}\right) = \varphi^{(j)}_1\left(X^{(j)}_1\right)\times\cdots\times \varphi^{(j)}_{k_j}\left(X^{(j)}_{k_j}\right).
    \end{equation*}
In order to prove \eqref{eq:lawconv}, we will consider the following intermediate term 
\begin{equation*}
    B_n = \mathbb{E}\left[\prod_{k=1}^{k_1}\left(\frac{1}{n_1}\sum_{i=1}^{n_1}\varphi^{(1)}_{k}\left(X^{(1)}_i\right)\right)\times \cdots \times \prod_{k=1}^{k_q}\left(\frac{1}{n_q}\sum_{i=1}^{n_q}\varphi^{(q)}_{k}\left(X^{(q)}_i\right)\right)\right],
\end{equation*}
we will also denote the left hand side and the right hand side of \eqref{eq:lawconv} by $A_n$ and $C_n$ respectively. So it is sufficient to prove that $A_n - B_n \xrightarrow[n\to\infty]{} 0$ and $B_n - C_n \xrightarrow[n\to\infty]{} 0$.

\begin{lemma}
    We have $B_n - C_n \xrightarrow[n\to\infty]{}0$.
\end{lemma}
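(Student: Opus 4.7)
The plan is to rewrite $B_n$ as the expectation of a product of integrals against the block empirical measures $\mu_n^{(j)}$, reduce the convergence to the continuous mapping theorem applied to the evaluation functionals $\mu\mapsto \int \varphi\,d\mu$, and conclude by dominated convergence.

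First, observe that for every $j\in[q]$ and $k\in[k_j]$,
$$\frac{1}{n_j}\sum_{i=1}^{n_j}\varphi^{(j)}_{k}\bigl(X^{(j)}_i\bigr) \;=\; \int \varphi_k^{(j)}\, d\mu_n^{(j)},$$
so that
$$B_n \;=\; \mathbb{E}\left[\,\prod_{j=1}^{q}\prod_{k=1}^{k_j}\int \varphi_k^{(j)}\, d\mu_n^{(j)}\right].$$
The key observation is that Assumption~(i) of Proposition~\ref{prop:sznitman_generalized} provides convergence in law of the random vector $(\mu_n^{(1)},\ldots,\mu_n^{(q)})$ to the \emph{deterministic} limit $\bs{\mu}=(\mu_1,\ldots,\mu_q)\in\mathcal{P}(\mathbb{R})^q$; since the limit is a constant, this convergence automatically upgrades to convergence in probability in the product of weak topologies.

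Next, since each $\varphi_k^{(j)}$ may be taken bounded continuous (it suffices to establish the targeted convergence in law on tensor products of such functions), the evaluation map $\mu\mapsto \int\varphi_k^{(j)}\,d\mu$ is continuous on $\mathcal{P}(\mathbb{R})$ for the weak topology. Applying the continuous mapping theorem jointly to the finite collection indexed by $(j,k)$ will then yield
$$\int \varphi_k^{(j)}\, d\mu_n^{(j)} \xrightarrow[n\to\infty]{\mathbb{P}} \int \varphi_k^{(j)}\, d\mu_j \;=\; \mathbb{E}\left[\varphi_k^{(j)}\bigl(\Tilde{X}_j\bigr)\right],$$
and consequently the product of these finitely many factors converges in probability to $C_n = \prod_{j,k}\mathbb{E}[\varphi_k^{(j)}(\Tilde{X}_j)]$.

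To conclude, each factor is uniformly bounded by $\|\varphi_k^{(j)}\|_\infty$, so the whole product is dominated by the deterministic constant $\prod_{j,k}\|\varphi_k^{(j)}\|_\infty<\infty$; dominated convergence then yields $B_n \to C_n$, which is precisely $B_n-C_n\to 0$. I do not expect any serious obstacle: the single subtle point is the upgrade from convergence in law to a deterministic limit into joint convergence in probability, which is exactly what allows the continuous mapping theorem and the subsequent exchange of limit and expectation through bounded convergence.
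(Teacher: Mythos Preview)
Your proposal is correct and rests on the same key observation as the paper: the product $\prod_{j,k}\int\varphi_k^{(j)}\,d\nu_j$ is a bounded continuous functional on $\mathcal{P}(\mathbb{R})^q$, so Assumption~(i) applies directly. The paper is slightly more economical---it applies the definition of convergence in law on $\mathcal{P}(\mathbb{R})^q$ to this functional $F$ in one step, whereas you first upgrade to convergence in probability (using that the limit is deterministic) and then invoke dominated convergence; your detour is harmless but unnecessary, since $B_n=\mathbb{E}[F(\mu_n^{(1)},\ldots,\mu_n^{(q)})]$ already has the expectation built in.
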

\begin{proof}
    This immediate by Assumption~\ref{ass-chaos-II:(i)}. In fact, consider the continuous bounded test function $F:\mathcal{P}(\mathbb{R})^q \rightarrow \mathbb{R}$ defined for any $q$-uplet of probability measures $(\nu_1,\cdots,\nu_q)$ by
    \begin{equation*}
        F\left(\nu_1,\cdots,\nu_q\right) = \prod_{i_1=1}^{k_1}\left(\int_{\mathbb{R}}\varphi_{i_1}^{(1)}d\nu_1\right)\times\cdots\times \prod_{i_q=1}^{k_q}\left(\int_{\mathbb{R}}\varphi_{i_q}^{(q)}d\nu_q\right).
    \end{equation*}
Using Assumption~\ref{ass-chaos-II:(i)} yields to the desired result.
\end{proof}

\begin{lemma}
    We have $A_n-B_n \xrightarrow[n\to\infty]{}0$
\end{lemma}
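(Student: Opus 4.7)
The plan is to exploit blockwise exchangeability to rewrite $A_n$ as a symmetrized average over ordered tuples of \emph{distinct} indices within each block, and then compare this with the unrestricted average built into $B_n$. The difference collapses onto the ``diagonal'' tuples (those with at least one repetition), whose contribution is of order $O(1/n)$.

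For each $j\in [q]$, let $(n_j)_{k_j} \triangleq n_j(n_j-1)\cdots(n_j-k_j+1)$ and introduce
\begin{equation*}
S_j \ \triangleq\ \frac{1}{(n_j)_{k_j}} \sum_{\substack{(i_1,\dots,i_{k_j})\in [n_j]^{k_j}\\ \text{all distinct}}} \prod_{l=1}^{k_j} \varphi^{(j)}_l\bigl(X^{(j)}_{i_l}\bigr),
\qquad
T_j \ \triangleq\ \frac{1}{n_j^{k_j}} \sum_{(i_1,\dots,i_{k_j})\in [n_j]^{k_j}} \prod_{l=1}^{k_j} \varphi^{(j)}_l\bigl(X^{(j)}_{i_l}\bigr).
\end{equation*}
By construction $B_n = \mathbb{E}[\prod_j T_j]$. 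By Assumption \ref{ass-chaos-II:(ii)}, each summand of $S_j$ has the same expectation as $\prod_{l=1}^{k_j}\varphi^{(j)}_l(X^{(j)}_l)$ (even jointly across blocks, since one may compose the $\sigma_j$'s independently), so $A_n = \mathbb{E}[\prod_j S_j]$ as well.

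Next, I would bound $|T_j - S_j|$ pointwise. Write $M \triangleq \max_{j,l}\|\varphi^{(j)}_l\|_\infty$, which is finite since the test functions are bounded. Then both $|T_j|$ and $|S_j|$ are bounded by $M^{k_j}$, and the number of non-distinct tuples in $[n_j]^{k_j}$ is $n_j^{k_j}-(n_j)_{k_j} = O(n_j^{k_j-1})$, giving a deterministic bound $|T_j-S_j|\leq C_j/n_j$ (the constant $C_j$ depending only on $k_j$ and $M$). I would then telescope:
\begin{equation*}
\prod_{j=1}^q T_j \,-\, \prod_{j=1}^q S_j \ =\ \sum_{j=1}^q \bigl(T_j - S_j\bigr)\Bigl(\prod_{j'<j} T_{j'}\Bigr)\Bigl(\prod_{j'>j} S_{j'}\Bigr),
\end{equation*}
so that, using $n_j/n\to c_j\in(0,1)$ from \eqref{eq:nj},
\begin{equation*}
|B_n - A_n| \ \leq\ \sum_{j=1}^q \frac{C_j}{n_j}\, M^{k_1+\cdots+k_q - k_j} \ =\ O\!\left(\frac{1}{n}\right) \xrightarrow[n\to\infty]{}\ 0\,,
\end{equation*}
which closes the lemma and hence the proposition.

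The main obstacle is purely bookkeeping: tracking the combinatorial decomposition of $[n_j]^{k_j}$ into distinct vs.\ coincident tuples and making sure the blockwise exchangeability is used simultaneously across the $q$ blocks (one symmetrization per block, not a single symmetrization of the whole vector). Once those two points are handled cleanly, the rest is boundedness of the test functions and a telescoping estimate, both of which are standard and mirror Sznitman's original argument in \cite{sznitman89}.
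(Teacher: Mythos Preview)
Your argument is correct and follows essentially the same route as the paper: blockwise exchangeability gives $A_n=\mathbb{E}\bigl[\prod_j S_j\bigr]$ exactly, and the comparison with $B_n=\mathbb{E}\bigl[\prod_j T_j\bigr]$ reduces to counting the $n_j^{k_j}-(n_j)_{k_j}=O(n_j^{k_j-1})$ coincident tuples and using boundedness of the test functions. The only cosmetic difference is that the paper splits $B_n=B_n'+B_n''$ and bounds $|A_n-B_n'|$ and $|B_n''|$ separately, whereas you telescope $\prod_j T_j-\prod_j S_j$; both yield the same $O(1/n)$ estimate.
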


\begin{proof}
    We have
    \begin{equation*}
        A_n = \mathbb{E}\left[\prod_{i_1=1}^{k_1}\varphi^{(1)}_{i_1}\left(X^{(1)}_{i_1}\right)\times\cdots\times\prod_{i_q=1}^{k_q}\varphi^{(q)}_{i_q}\left(X^{(q)}_{i_q}\right)\right].
    \end{equation*}
    By Assumption~\ref{ass-chaos-II:(ii)} we can re-write $A_n$ as follows
    \begin{equation*}
        A_n = \frac{1}{\prod_{j=1}^q \left(n_j!\right)}\sum_{\sigma_1\in\mathfrak{S}_1,\cdots,\sigma_q\in\mathfrak{S}_q} \mathbb{E}\left[\prod_{i_1=1}^{k_1}\varphi^{(1)}_{i_1}\left(X^{(1)}_{\sigma_1(i_1)}\right)\times\cdots\times\prod_{i_q=1}^{k_q}\varphi^{(q)}_{i_q}\left(X^{(q)}_{\sigma_q(i_q)}\right)\right],
    \end{equation*}
Now observe that in the $j$-th factor in the formula above, the product is taken only over the subset $[k_j]$ of $[n_j]$, thus we consider the equivalence relation $\sim_{j}$ defined on $\mathfrak{S}_j$ by 
\begin{equation*}
    \forall \sigma,\nu \in \mathfrak{S}_{n_j} \quad \left[\sigma\sim_{j}\nu \right] \ \Leftrightarrow \ \left[ \sigma(i)=\nu(i), \ \forall i \in [k_j]\right],
\end{equation*}
in oder words, we will identify permutations that agree on the set $[k_j]$, this means that for all $j\in [q]$, if $\sigma_j \sim_j \nu_j $ then 
\begin{equation*}
\prod_{i_1=1}^{k_1}\varphi^{(1)}_{i_1}\left(X^{(1)}_{\sigma_1(i_1)}\right)\times\cdots\times\prod_{i_q=1}^{k_q}\varphi^{(q)}_{i_q}\left(X^{(q)}_{\sigma_q(i_q)}\right) =\prod_{i_1=1}^{k_1}\varphi^{(1)}_{i_1}\left(X^{(1)}_{\nu_1(i_1)}\right)\times\cdots\times\prod_{i_q=1}^{k_q}\varphi^{(q)}_{i_q}\left(X^{(q)}_{\nu_q(i_q)}\right).
\end{equation*}
Now consider the quotient group of $\mathfrak{S}_{n_j}$ with respect to $\sim_j$ denoted as $\tilde{\mathfrak{S}}_j $, then we have the following:
\begin{equation*}
        A_n = \left(\prod_{j=1}^q \frac{(n_j-k_j)!}{n_j!}\right)\sum_{\sigma_1\in\tilde{\mathfrak{S}}_1,\cdots,\sigma_q\in\tilde{\mathfrak{S}}_q} \mathbb{E}\left[\prod_{i_1=1}^{k_1}\varphi^{(1)}_{i_1}\left(X^{(1)}_{\sigma_1(i_1)}\right)\times\cdots\times\prod_{i_q=1}^{k_q}\varphi^{(q)}_{i_q}\left(X^{(q)}_{\sigma_q(i_q)}\right)\right].
    \end{equation*}
Now let us analyse the term $B_n$. If we develop the products inside the $\mathbb{E}$ symbol, we get
\begin{equation*}
\begin{split}
     B_n &= \mathbb{E}\left[\frac{1}{n_1^{k_1}}\sum_{i^{(1)}_1,\cdots,i^{(1)}_{k_1}=1}^{n_1} \prod_{\ell=1}^{k_1}\varphi^{(1)}_\ell\left(X^{(1)}_{i^{(1)}_\ell}\right) \times\cdots\times \frac{1}{n_q^{k_q}}\sum_{i^{(q)}_1,\cdots,i^{(q)}_{k_q}=1}^{n_q} \prod_{\ell=1}^{k_q}\varphi^{(q)}_\ell\left(X^{(q)}_{i^{(q)}_\ell}\right)  \right]\\
     &= \frac{1}{\prod_{j=1}^{q} n_j^{k_j}}\sum_{\substack{i_1^{(1)},\cdots,i_{k_1}^{(1)} \in [n_1]\\ \vdots\\ i_1^{(q)},\cdots,i_{k_q}^{(q)}\in [n_q]}}\mathbb{E}\left[ \prod_{\ell=1}^{k_1}\varphi^{(1)}_\ell\left(X^{(1)}_{i^{(1)}_\ell}\right) \times\cdots\times  \prod_{\ell=1}^{k_q}\varphi^{(q)}_\ell\left(X^{(q)}_{i^{(q)}_\ell}\right)  \right] \\
\end{split}
\end{equation*}
For all $j\in [q]$, let $S_j$ be the subset of $[n_j]^{k_j}$ of $k_j$-uplets with different elements. The sum in $B_n$ is over the grid $[n_1]^{k_1}\times\cdots\times [n_q]^{k_q}$, we will decompose it into a sum over $S_1\times \cdots\times S_q$ and a sum over the complementary set $[n_1]^{k_1}\times\cdots\times [n_q]^{k_q} \setminus S_1\times\cdots\times S_q$, let us denote these two sums by $B^\prime_n$ and $B^{\prime\prime}_n$ respectively. We will show that 
\begin{equation*}
    A_n - B^\prime_n \xrightarrow[n\to\infty]{} 0 \quad \text{and}\quad B^{\prime\prime}_n \xrightarrow[n\to\infty]{} 0.
\end{equation*}
Let us first prove that $A_n-B_n^\prime \xrightarrow[n\to\infty]{}0$. We have 
\begin{equation*}
    B^\prime_n = \frac{1}{\prod_{j=1}^{q} n_j^{k_j}} \sum_{\substack{(i_1^{(1)},\cdots,i_{k_1}^{(1)}) \in S_1\\ \vdots\\ (i_1^{(q)},\cdots,i_{k_q}^{(q)})\in S_q}}\mathbb{E}\left[ \prod_{\ell=1}^{k_1}\varphi^{(1)}_\ell\left(X^{(1)}_{i^{(1)}_\ell}\right) \times\cdots\times  \prod_{\ell=1}^{k_q}\varphi^{(q)}_\ell\left(X^{(q)}_{i^{(q)}_\ell}\right)  \right] .
\end{equation*}
Observe now that we can identify each $k_j$-uplet $(i_1^{(j)},\cdots,i_{k_j}^{(j)})$ of $S_j$ with an element $\sigma_j\in \tilde{\mathfrak{S}}_j$, and this is because by definition of the set $S_j$ the indices $\{i_1^{(j)},\cdots,i_{k_j}^{(j)}\}$ are different, i.e. one can construct $\sigma_j\in \tilde{\mathfrak{S}}_j$ such that 
\begin{equation*}
    \sigma_j\left(\ell\right) = i_\ell^{(j)} \quad \forall \ell\in [k_j],
\end{equation*}
this essentially means that we can index the sum in $B_n^\prime$ using permutations,
\begin{equation*}
    B^\prime_n = \frac{1}{\prod_{j=1}^{q} n_j^{k_j}} \sum_{\sigma_1\in\tilde{\mathfrak{S}}_1,\cdots,\sigma_q\in\tilde{\mathfrak{S}}_q}\mathbb{E}\left[ \prod_{\ell=1}^{k_1}\varphi^{(1)}_\ell\left(X^{(1)}_{\sigma_1(\ell)}\right) \times\cdots\times  \prod_{\ell=1}^{k_q}\varphi^{(q)}_\ell\left(X^{(q)}_{\sigma_q(\ell)}\right)  \right].
\end{equation*}
Except the multiplicative factor, the terms $A_n$ and $B^\prime_n$ are exactly the same, so let us consider the difference 
\begin{equation*}
\begin{split}
    A_n-B_n^\prime =& \left(\prod_{j=1}^{q}\frac{(n_j-k_j)!}{n_j!} - \frac{1}{\prod_{j=1}^{q}n_j^{k_j}}\right)\\
&\times\sum_{\sigma_1\in\tilde{\mathfrak{S}}_1,\cdots,\sigma_q\in\tilde{\mathfrak{S}}_q}\mathbb{E}\left[ \prod_{\ell=1}^{k_1}\varphi^{(1)}_\ell\left(X^{(1)}_{\sigma_1(\ell)}\right) \times\cdots\times  \prod_{\ell=1}^{k_q}\varphi^{(q)}_\ell\left(X^{(q)}_{\sigma_q(\ell)}\right)  \right],
\end{split}
\end{equation*}
denote $M$ an upper bound of the test functions $\varphi_{1}^{(1)},\cdots,\varphi_{k_q}^{(q)}$ and notice that the set $\tilde{\mathfrak{S}}_1\times\cdots\times\tilde{\mathfrak{S}}_q$
is of cardinal $\prod_{j=1}^{q}n_j! / \prod_{j=1}^{q}(n_j-k_j)!$, thus we get
\begin{equation*}
\begin{split}
     \left|A_n-B_n^\prime\right| &\leq \left(\prod_{j=1}^{q}\frac{(n_j-k_j)!}{n_j!} - \frac{1}{\prod_{j=1}^{q}n_j^{k_j}}\right)\left(\prod_{j=1}^{q}\frac{n_j!}{(n_j-k_j)!}\right) M^{k_1+\cdots+k_q}\\
     &= \left(1 - \prod_{j=1}^{q}\frac{n_j!}{n_j^{k_j}(n_j-k_j)!}\right) M^{k_1+\cdots+k_q}. \\
\end{split}
\end{equation*}
Recall that when $n\to\infty$ we also have $n_j(n)\to\infty$ because $c_j>0$. As $k_j\leq n_j$ is a constant integer we have 
$$ \frac{n_j!}{n_j^{k_j}(n_j-k_j)!} \xrightarrow[n\to\infty]{}1\quad \forall j \in [q], $$
finally we get $A_n-B_n^\prime \xrightarrow[n\to\infty]{}0$. It remains to show that $B_n^{\prime\prime}\xrightarrow[n\to\infty]{}0$. Now, $B_n^{\prime\prime}$ is a sum over the complementary part of $S_1\times\cdots \times S_q$ in $[n_1]^{k_1}\times\cdots\times [n_q]^{k_q}$, this complementary subset is of size
$$ n_1^{k_1}\times\cdots\times n_q^{k_q} - \frac{n_1!\times \cdots\times n_q!}{(n_1 - k_1)!\times \cdots\times (n_q - k_q)!}, $$
if we upper bound the test functions by $M$ again, we obtain the following inequality
\begin{equation*}
    \left|B_n^{\prime\prime}\right| \leq \left(1 - \prod_{j=1}^{q}\frac{n_j!}{n_j^{k_j}(n_j-k_j)!}\right) M^{k_1+\cdots+k_q},
\end{equation*}
thus $B_n^{\prime\prime}\xrightarrow[n\to\infty]{}0$.
\end{proof}
 In summary, we have proved that each term of the following expression: $$A_n-C_n = (A_n-B_n^\prime) + B_n^{\prime\prime} + (B_n-C_n)$$ converges to $0$, which ends the proof of Proposition~\ref{prop:sznitman_generalized}.

\section{Proof of Proposition \ref{prop:struct}}
\label{prop:struct-proof}
Define the matrix 
\begin{align*} 
D_k &\triangleq \begin{bmatrix} d_0 \\ & d_1 \\ & & \ddots \\ & & & d_{k-1} \end{bmatrix} \ \text{for } k\geq 1,
\end{align*}  
Recall the compact form of the AMP iteration~\eqref{eq:AMP_compact}
\[
    \bs{u}^{k+1} = A \bs{q}^k - \rho d_k \bs{q}^{k-1}, \quad k \geq 0. 
\] 
Considering the iterates $\bs u^1, \ldots, \bs u^k$ provided by the previous equation and using the notations we just introduced, it is easy to see that
\[
 A Q_k = U_k + \rho  \begin{bmatrix}0 & Q_{k-1}\end{bmatrix} \ D_k, \quad k \geq 1 , 
\]
where $Q_0$ is by convention the empty matrix, (i.e. $ \begin{bmatrix}
    0 & Q_0
\end{bmatrix}=  [0]\in \mathbb{R}^{1\times 1}$).
For $k \geq 1$, we now write 
\begin{align*} 
\bu^{k+1} &= A \bq^k - \rho d_k \bq^{k-1} 
          = A (P_k + P_k^\perp) \bq^k - \rho d_k \bq^{k-1}  \\
 &= A P_k \bq^k + A P_k^\perp \bq^k + \rho (A P_k)^\top P_k^\perp \bq^k 
    - \rho (A P_k)^\top P_k^\perp \bq^k - \rho d_k \bq^{k-1}  \\
 &= A P_k \bq^k + \rho (A P_k)^\top P_k^\perp \bq^k - \rho d_k \bq^{k-1} 
  + \cI_k(A) .
\end{align*} 
The first term at the right hand side can be rewritten as 
\[
A P_k \bq^k = A Q_k \left(Q_k^\top Q_k\right)^{\dag} Q_k^\top \bq^k 
  = \left(U_k + \rho   \begin{bmatrix}0 & Q_{k-1}\end{bmatrix} \ D_k \right) 
    \balpha^k,  
\]
and the next term is rewritten as 
\begin{align*} 
\rho (A P_k)^\top P_k^\perp \bq^k &= 
 \rho  \left(A Q_k \left(Q_k^\top Q_k\right)^{\dag} Q_k^\top\right)^\top 
   P_k^\bot \bq^k \\
 &=\rho  \left(\left(U_k + \rho   \begin{bmatrix}0 & Q_{k-1}\end{bmatrix} \ D_k\right)  
   \left(Q_k^\top Q_k\right)^{\dag} Q_k^\top\right)^\top P_k^\bot \bq^k \\
 &= \rho Q_k  \left(Q_k^\top Q_k\right)^{\dag}\left( U_k^\top P_k^{\perp} 
   \bs{q}^k + \rho D_k^\top \begin{bmatrix}0 & Q_{k-1}\end{bmatrix}^\top P_k^\perp  
    \bq^k \right) \\
 &= \rho Q_k  \left(Q_k^\top Q_k\right)^{\dag} U_k^\top P_k^{\perp} \bs{q}^k, 
\end{align*} 
since $P_k^\perp Q_{k-1} = 0$. This gives 
\begin{equation}
\label{uk+1} 
\bu^{k+1} = \sum_{l=1}^k \alpha^k_l \bu^l 
 + \rho \begin{bmatrix}0 & Q_{k-1}\end{bmatrix}\ D_k \balpha^k 
 + \rho Q_k  \left(Q_k^\top Q_k\right)^{\dag} U_k^\top P_k^{\perp} \bs{q}^k 
  - \rho d_k P_k \bq^{k-1} + \cI_k(A), 
\end{equation} 
since $\bq^{k-1} \in \colspan P_k$. Developing in turn the three middle terms 
at the right hand side of this equation, and observing that 
$P_k \bq^l = \bq^l$ for each $l \leq k-1$, we obtain 
\[
 \rho  \begin{bmatrix}0 & Q_{k-1}\end{bmatrix} \ D_k \balpha^k = 
 \rho \sum_{l=1}^k \alpha^k_l d_{l-1} \bq^{l-2} 
 = \rho P_k \sum_{l=1}^k \alpha^k_l d_{l-1} \bq^{l-2} 
 = \rho Q_k  \left(Q_k^\top Q_k\right)^{\dag} 
   \sum_{l=1}^k \alpha^k_l d_{l-1} Q_k^\top \bq^{l-2} 
\]
(with $\bq^{-1} = 0$), 
\begin{align*} 
 \rho Q_k \left(Q_k^\top Q_k\right)^{\dag}U_k^\top P_k^\bot \bq^k &= 
  Q_k \left(Q_k^\top Q_k\right)^{\dag} 
    \left(U_k^\top \bq^k - U_k^\top P_k \bq^k\right) \\
   &= \rho Q_k  \left(Q_k^\top Q_k\right)^{\dag} 
  \left(U_k^\top \bs{q}^k - U_k^\top Q_k \balpha^k\right) \\
   &= \rho Q_k  \left(Q_k^\top Q_k\right)^{\dag}
   \left(U_k^\top \bq^k - \sum_{l=1}^k \alpha_l^k U_k^\top \bq^{l-1} \right), 
\end{align*}
and 
\[
- \rho d_k P_k \bq^{k-1} = 
  - \rho d_k Q_k  \left(Q_k^\top Q_k\right)^{\dag} Q_k^\top \bs{q}^{k-1}. 
\]
Injecting these equations into~\eqref{uk+1}, we obtain 
\[
\bu^{k+1} = \sum_{l=1}^k \alpha^k_l \bu^l 
 + \rho Q_k \left(Q_k^\top Q_k\right)^{\dag} \left( 
 U_k^\top \bq^k - d_k Q_k^\top \bs{q}^{k-1} 
- \sum_{l=1}^k \alpha_l^k 
  \left( U_k^\top \bq^{l-1} - d_{l-1} Q_k^\top \bq^{l-2} \right) \right) 
  + \cI_k(A), 
\]
which is the equation provided in the statement of the proposition. 

\end{appendix}
% \printbibliography

 \end{document}